\theoremstyle{mystyle_thm}
\newtheorem{theorem}{Theorem}[section]
\newtheorem{lemma}[theorem]{Lemma}
\newtheorem{proposition}[theorem]{Proposition}
\newtheorem{corollary}[theorem]{Corollary}
\newtheorem*{propositionOhne}{Proposition}
\theoremstyle{mystyle_def}
\newtheorem{definition}[theorem]{Definition}
\newtheorem{example}[theorem]{Example}
\theoremstyle{mystyle_rmk}
\newtheorem{remark}[theorem]{Remark}
\newtheorem{notation}[theorem]{Notation}
\newtheorem{SA}[theorem]{Standing Assumption}
\newtheorem*{conventions}{Clarifications and Conventions}
\DeclareMathOperator{\Esp}{E}
\DeclareMathOperator{\Prob}{P}
\DeclareMathOperator{\IR}{\mathbb{R}}
\DeclareMathOperator{\IN}{\mathbb{N}}
\DeclareMathOperator{\bF}{\mathcal{F}}
\DeclareMathOperator{\dom}{dom}
\DeclareMathOperator{\card}{card}
\newcommand{\ext}[0]{0}
\newcommand{\mc}[0]{\mathcal}
\newcommand{\wh}[0]{\widehat}
\newcommand{\wt}[0]{\widetilde}
\newcommand{\msf}[0]{\mathsf}
\newcommand{\s}[0]{ s}
\newcommand{\m}[0]{ m}
\newcommand{\q}[0]{ q}
\newcommand{\Lop}[0]{\mathrm L}
\newcommand{\D}[0]{\mathrm D}
\newcommand{\mX}[0]{\msf X}
\newcommand{\mW}[0]{\msf W}
\newcommand{\mY}[0]{\msf Y}
\newcommand{\bv}[0]{\mathbf{v}}
\newcommand{\by}[0]{\mathbf{y}}
\newcommand{\bx}[0]{\mathbf{x}}
\newcommand{\ba}[0]{\mathbf{a}}
\newcommand{\bb}[0]{\mathbf{b}}
\newcommand{\xlra}[2]{ \xrightarrow[#2]{#1} }
\newcommand{\rd}{\mathrm{d}}
\newcommand{\vd}{\,\mathrm{d}}
\newcommand{\process}[1]{(#1)_{t\ge 0}}
\newcommand{\indicB}[1]{\mathbbm{1}_{\{#1\}}}
\newcommand{\loct}[3]{L^{#2}_{#3}(#1)}
\newcommand{\braces}[1]{ \left({#1}\right) } 
\newcommand{\sqbraces}[1]{ \left[{#1}\right] } 
\newcommand{\cubraces}[1]{ \left\{{#1}\right\}} 
\newcommand{\xnorm}[2]{ \left\|{#1}\right\|_{#2}} 
\newcommand{\abs}[1]{ \left|{#1}\right|} 
\newcommand{\qv}[1]{ \left\langle {#1} \right\rangle }
\begin{document}
	
	\title{General Diffusions on the Star Graph\\ as Time-Changed Walsh Brownian Motion}
	
	\author{Alexis Anagnostakis\thanks{Universit\'e de Lorraine, CNRS, IECL, F-54000 Nancy, France} \\ \small \hemail{alexis.anagnostakis@univ-lorraine.fr}}
	
	\date{Version: \today}
	
	\maketitle
	
	\begin{abstract}
We establish the representation of general regular diffusions on star-shaped graphs as time-changed Walsh Brownian motions. These are regular continuous Markov processes described locally by a family generalized second order differential operators defined on every edge and a gluing condition at the junction vertex. 
This allows us to prove two additional results: 
(i) A representation of diffusions with sticky gluing conditions as time-changes of diffusions governed by the same differential operators but with non-sticky gluing conditions.
(ii) An occupation times formula for such diffusions, analogous to the classical It\^o--McKean formula for one-dimensional diffusions.

Additionally, we prove two results of independent interest. First, conditions under which a diffusion on the star graph is Feller and Feller--Dynkin, extending classical results for one-dimensional diffusions. Second, the existence uniqueness of solutions to the Dirichlet problem on the unit disk of the star graph for a general diffusion operator and explicit expressions for its solution.
	\end{abstract}

	\afabs{Keywords and phrases}{Diffusion on network; generalized second order operator; 
		occupation times formula; sticky lateral condition; It\^o--McKean time change; Dirichlet problem; Feller--Dynkin process.} 
	
	\afabs{Mathematics Subject Classification 2020}{60J60; 60J55; 35J08; 60J50.}

%% New Section
\section{Introduction}
\label{sec_introduction}

The study of diffusions on metric graphs and networks has been a recurring topic of interest in probability theory since the pioneering works of Feller (one may consult the seminal
works~\cite{freidlin2000diffusion,freidlin1993diffusion,lumer1980connecting}). 
These processes arise in diverse contexts, 
as noted in~\cite{weber2001occupation} with updated references: 

\begin{quotation}
	\emph{Such processes appear as models, for example, of electrical networks, vibrating elastic nets, nerve impulse propagation, or the movement of nutrients in the root system of a plant (see~\cite{frank1984random,friedman1994diffusionsnetworks}).}
\end{quotation}

Among these, diffusions on \textit{star-shaped graphs}---comprising multiple outward-pointing edges connected at a single junction vertex---have attracted significant attention due to their simple topology and their role as building blocks for more complex processes. Such graphs are often referred to as \textit{spiders}, and the corresponding processes are termed \textit{diffusion spiders} in the literature. The simplest example is the \textit{Walsh Brownian motion} on the star graph (or \textit{Brownian spider}), which behaves like a standard Brownian motion on each edge and randomly selects among edges upon departing the junction vertex according to fixed probabilities, known as \textit{bias parameters}.

From a purely mathematical perspective, star-shaped graphs serve as a natural setting for exploring the interaction between local diffusion behavior and global topological constraints. The central challenge lies in characterizing the behavior of diffusions at the junction vertex, where the process may exhibit sticky behavior---the process spends a positive amount of time at the vertex.

Recent literature has seen a surge of interest in diffusions on star-shaped graphs, with several key contributions. 
For example, in~\cite{freidlin2000diffusion}, the authors derive an It\^o  formula for diffusions that are absolutely continuous on each edge under non-sticky lateral conditions. 
In~\cite{lempa2024diffusion}, the resolvent density of a diffusion spider is computed.
This is used to characterize excessive functions of the diffusion and to solve optimal stopping problems. 
The joint moments of occupation times of edges of a diffusion spider are investigated in~\cite{salminen2024occupation}, while the diameter of the optimal stopped spider process is studied in~\cite{bednarz2024diameter}. 
In~\cite{bobrowski2024snapping}, the Walsh Brownian motion is approximated by snapping out Brownian motions
with high permeability coefficients. 
In~\cite{berry2024sticky}, the authors study diffusions on star-shaped graphs that are absolutely continuous on each edge and feature a sticky gluing condition at the junction vertex. They demonstrate that such a diffusion can be expressed as a time-change of a non-sticky diffusion, generalizing results in~\cite{EngPes,Sal2017} for one-dimensional diffusions. Building on this result, they propose a sticky version of the It\^o  formula introduced in~\cite{freidlin2000diffusion}, referred to as the sticky Freidlin--Sheu--It\^o  formula. 

In parallel, there has been growing interest in stochastic control problems for diffusions
and mean field games on graphs and networks. 
To cite a few recent contributions,~\cite{achdou2019class_infinite,barles2024nonlocal,berry2024stationary,camilli2024continuousdependence,ohavi2023comparison}. 

\medskip

In this paper, we study \emph{general diffusions}---that is, regular continuous strong Markov processes---on star-shaped graphs. As shown in~\cite{freidlin1993diffusion}, these processes are characterized by:
\begin{itemize}
	\item A family of generalized second-order differential operators describing the behavior on each edge;
	\item A (sticky) gluing condition specifying the behavior at the junction vertex $\mathbf{v}$;
	\item A set of boundary conditions that completely determine the boundary behavior.
\end{itemize}

In this paper, we show that any such diffusion $\mX$ can be represented as a re-scaled, time-changed Walsh Brownian motion with the same bias parameters as $\mX$. 
This allow us to prove two additional results. 
First, a representation of $\mX $ as time-change of a diffusion that is non-sticky at $ \bv$ and whose law matches $\mX$ locally on every each edge.
This generalizes~\cite[Propositions~4.1 and Theorem~4.4]{berry2024sticky} from diffusions determined by classical second order operators to those defined by generalized second order operators.
Second, we derive an occupation times formula for $\mX$, linking occupation and local time at $\bv $ in a manner analogous to one-dimensional sticky diffusions.

The main difference between the time-change characterization and the homogeneous occupation times formula for diffusions on the star graph, compared to their one-dimensional counterparts, is the presence of an additional integration in time to account for  edge-occupation by the process.

Additionally, we prove two auxiliary results of independent interest. First, we provide  conditions under which a diffusion on the star graph is Feller and Feller--Dynkin, extending classical results for one-dimensional diffusions. Second, we establish the existence and uniqueness of solutions to the Dirichlet problem on the unit disk of the star graph, along with an explicit expression for the corresponding Green function. These results play a crucial role in our derivations: the first justifies the use of Dynkin's formula and Dynkin's operator, while the second enables computations at the junction vertex that are compatible with the gluing condition.

The main challenge in this work lies in the irregularity of the functional spaces associated with diffusions issued from generalized second order operators. 
Unlike classical diffusions, where the domain of the generator consists of smooth functions, the functions in our setting may lack sufficient regularity. 
This precludes the use of second order expansions, as employed in~\cite{Sal2017} for one-dimensional diffusions and in~\cite{berry2024sticky} for non-sticky diffusions on star graphs. 
Instead, our approach is inspired by the proof of~\cite[Theorem~IV.47.1]{RogWilV2} in the presence of reflecting boundaries.

\medskip

Expressing processes as time-changes of simpler ones is a powerful and versatile tool in the theory of continuous processes. 
In one-dimensional settings, this is typically achieved through the Dambis--Dubins--Schwarz theorem for local martingales and the It\^o--McKean time-change for regular diffusions on natural scale. 
These characterizations have been applied in numerous settings, leading to significant advancements in stochastic analysis. 
Some results proven via time-change are the following:
\begin{itemize}
	\item The celebrated occupation times formula for diffusions on natural scale of It\^o  and McKean~\cite[Section~5.4]{ItoMcKean96}.
	
	\item The path-space description of one-dimensional sticky diffusions~\cite{EngPes,Sal2017,warren2015stickyflows}
	and explicit versions of stochastic calculus results in the presence of stickiness
	(It\^o formula, Girsanov theorem, Freidlin--Sheu--It\^o  formula)~\cite{Anagnostakis2022,berry2024sticky,Sal2017}.
	
	\item Definition of Diffusions: The definition of sticky L\'evy process in~\cite{ramirez2024stickyLevy} and characterization of skew Brownian motion in~\cite{harrisson1981onskew}.
	
	\item Approximations and Convergence: The approximation one-dimensional diffusions by birth-and-death processes, particularly in cases where the diffusion coefficients are irregular or singular (see~\cite[Chapter~6]{ethier2005markov}).
	The convergence convergence rates for the approximation schemes in~\cite{anagnostakis2023general,AnkKruUru2} of one-dimensional diffusions.
	The convergence in law of diffusions if both weak convergence of their speed measures and pointwise convergence of their scale functions occur~\cite{brooks1982weak}. 
	The convergence of the frozen random walk in~\cite{Ami} to the sticky Brownian motion.
\end{itemize}

The present work is motivated by the simulation of diffusions on graphs. 
In a further work, we use the time-change characterization to derive sufficient conditions for regularity estimates of a diffusion on a star graph. 
This allows proof of convergence for random walk approximations, as is done in~\cite{anagnostakis2023general} for one-dimensional diffusions. 

\subsection*{Paper Outline}

The paper is organized as follows. Section~\ref{sec_setting} introduces the necessary mathematical framework and presents the main results of the paper. Section~\ref{sec_char} establishes preliminary results, including (i) the role of the gluing condition in determining the infinitesimal behavior of the process at the junction vertex $\bv$, (ii) a construction of (sticky) Walsh Brownian motion, and (iii) the behavior of diffusion laws on $\Gamma$ under a change of scale. Section~\ref{sec_first_char} proves a Dambis--Dubins--Schwarz-type representation of a diffusion on $\Gamma$ as a time-changed Walsh Brownian motion. Section~\ref{sec_further} characterizes the local time of NSE diffusions and derives the corresponding It\^o formula. Section~\ref{sec_main_proof} provides proofs for the main results (Theorems~\ref{thm_main} and \ref{thm_main2}). 
Section~\ref{sec_occupation_II} derives an occupation times formula for NSE diffusions on the star graph, akin to the formula for one-dimensional diffusions in \cite[Section~5.4]{ItoMcKean96}.

Appendix~\ref{app_boundary} presents the boundary classification problem on the star-graph. 
Appendices~\ref{app_Fprocess} and~\ref{app_FD} explore conditions for a diffusion on the star to be Feller and Feller--Dynkin. The (sticky) Walsh Brownian motion, along with all processes studied in this paper, satisfies this condition. 
Appendix~\ref{app_strict_monotony} establishes the strictly increasing character of the quadratic variation of the distance to origin process of a regular diffusion 
on $\Gamma $. 
This will justify the time-change manipulations in our derivations. 
Appendix~\ref{app_Dirichlet} focuses on the Dirichlet problem for generalized second order differential operators on the unit disk of the star graph. From this analysis, we infer the role of the stickiness parameter in the gluing condition.

%% New Section
\section{First definitions and main results}
\label{sec_setting}

Following~\cite{mugnolo2019actually} we define star-shaped graphs by disjoint joining 
of intervals modulo an equivalence relation that expresses the geometric constraint at the junction vertex. 

More precisely, let 
\begin{equation}
	\label{eq_Gamma}
	\msf \Gamma  := \bigsqcup_{e\in E} O_e,
\end{equation} 
where for each $e\in E $, $O_e $ is an interval with a closed endpoint at $0$ and
another endpoint at $l_e \in (0,\infty] $.
Let $\sim $ be the equivalence relation defined on $\Gamma $ as
\begin{equation}
	\label{eq_def_equiv}
	\begin{aligned}
		(e,x)&\sim (e',y)
		& \text{if and only if}&
		& (e,x)=(e',y)&
		& \text{or}&
		& x=y={}&0,
	\end{aligned}
\end{equation}
for all $(e,x),(e',y)\in \Gamma $.
Then, the quotien set $\Gamma = \msf \Gamma/\sim $ is a star-shaped metric graph of edge-lengths $e\mapsto l_e $. 

With this construction, every element $\bx$ of $\Gamma $ admits the representation $(e,x)\in E \times O_e $, where $e$ is the edge on which $\bx $ is located and $x$ is the Euclidean distance-to-origin. 

We equip $\Gamma$ with the metric $d$, defined as
\[
d\big((e, x), (e', y)\big) =
\begin{cases}
	|x - y|, & \text{if } e = e', \\
	x + y, & \text{if } e \neq e'.
\end{cases}
\]
The space $(\Gamma, d)$ is a \textit{locally compact Polish space}. Throughout the paper, we assume the topology on $\Gamma$ that is induced by $d$ and denote by $B(\bv, \delta)$ the open ball centered at the junction vertex $\bv$ of radius $\delta > 0$. 

\medskip

We now present the notion of function on $\Gamma $ and define some functional spaces of interest. 
Following the representation $\Gamma = \msf \Gamma /\sim $, we may represent a function $f$ on 
$\Gamma $ as a direct sum of functions on $[0,\infty) $ that respects the structure~\eqref{eq_def_equiv}, i.e., 
\begin{equation}
	\label{eq_txt_function_representation}
	\begin{aligned}
		f &= \bigoplus_{e\in E} f_e,
		& f_e:{}  & O_e \mapsto \IR,
		& f_{e}(0)&=f_{e'}(0),
		& \forall\,& e,e'\in E,
	\end{aligned}
\end{equation}
so that $f(e,x) = f_e(x) $, for all $e\in E $ and $x\in O_e $, and so that
$f(e,0)= f(e',0) $, for all $e,e'\in E $. 
We denote with $C(\Gamma) $ the space of continuous real-valued functions on $\Gamma $,
with $C_b(\Gamma) $ the space of bounded continuous functions, and
with $C_0(\Gamma) $ the space of continuous functions that vanish at infinity.
These are defined as
\begin{align}
	C(\Gamma) &:= \braces{ f : \Gamma \to \mathbb{R} :\; f(e,\cdot) \in C(O_e), \; \forall\, e \in E },\\
	C_b(\Gamma) &:= \braces{ f \in C(\Gamma) : \exists M > 0 \text{ such that } |f(e, x)| \leq M, \; \forall\, e \in E, \; \forall\, x \in O_e },\\
	C_0(\Gamma) &:= \braces{ f \in C(\Gamma) : \lim_{x \to \infty} f(e, x) = 0, \; \forall\, e \in E }.
\end{align}

A diffusion on $\Gamma $ is a continuous strong Markov process on $\Gamma $. 
Assume such process $\mX $ defined on the probability space $\mc P_{\bx} := (\Omega,\process{\bF_t},\Prob_{\bx}) $ where $\mX_0 = \bx $, $\Prob_{\bx} $-almost surely and the filtration $\process{\bF_t} $ satisfies the usual conditions:
\begin{itemize}
	\item (right-continuity) $\bigcap_{s \ge t} \bF_s = \bF_t $,
	\item (completeness) $\mc F_0 $ contains all $\Prob_{\bx}$-negligible sets.
\end{itemize}
These conditions will be assumed on all filtrations throughout this paper. 

\medskip

A \emph{general diffusion} on $\Gamma$ is a strong Markov process on $\Gamma $ that is regular in the sense that $\Prob_{\bx} (T_{\by}< \infty) >0 $, for all $\bx \in \Gamma^{\circ}$ and $\by \in \Gamma $, where $T_{\by}$ is the hitting time of $\by $ defined as
$T_{\by} := \inf\{t \ge 0 : \mX_t = \by\} $.
By \cite[Theorem~3.1]{freidlin1993diffusion} a general diffusion on
$\Gamma $ is defined by:
\begin{xenumerate}{A}

	\item \label{item_A1} A family of generalized second order differential operators (in the sense of~\cite{Fel55})
	of the form $\frac{1}{2} \D_{m_e}\D_{s_e} $, where for all $e\in E $,
	$s_e $ is a continuous increasing function and $m_e $ a locally finite positive measure on $(0,l_e) $. 
	The pairs \((\s_e, \m_e;\, e \in E)\) describe the local behavior of the process on each edge \(e \in E\).
	In particular (see \cite[Definitions~VII.3.3 and~VII.3.7]{RevYor}), for such a process \(\mX\) defined on the probability space \((\Omega, \process{\bF_t}, \Prob_{\bx})\) such that \(\Prob_{\bx}\)-a.s. \(\mX_0 = \bx\), it holds that
	\begin{equation}
		\label{eq_scalespeed}
		\Prob_{\bx} \braces{T_{\bb}<T_{\ba}} = \frac{\s_e(x)-\s_e(a)}{\s_e(b)-\s_e(a)}
		\quad \text{and}
		\quad 
		\Esp_{\bx} \sqbraces{ T_{\bb} \wedge T_{\ba} }
		= 2 \int_{a}^{b} G^{(e)}_{a,b}(x,y) \m_e(\rd y),
	\end{equation}
	for all $a,x,b \in O_e $ with $a<x<b$, where 
	\begin{equation}
		G^{(e)}_{a,b}(x,y) := \frac{\braces{\s_e(x \wedge y)-\s_e(a)}\braces{\s_e(b)-\s_e(x \vee y)}}{\s_e(b)-\s_e(a)},
		\quad \forall\, a<x,y<b.
	\end{equation}

	\item \label{item_A2} A gluing condition at the junction vertex:
	\begin{equation}
		\label{eq_def_lateral}
		\sum_{e' \in E} \beta_{e'} f'_{e'}(0) = \rho \Lop_{e} f_e(0), \quad \forall\, e \in E,
	\end{equation}
	where $\rho \ge 0 $ and $(\beta_e)_{e\in E} $ satisfy:
	$\sum_{e'\in E} \beta_{e'} = 1 $ and $\beta_e >0 $, for all $e\in E $.

	\item \label{item_A3} A family of lateral conditions for every regular boundary $(e,l_e) $:
	\begin{equation}
		f'(e,l_e) = \rho_{e} \Lop_{e} f_e(l_e). 
	\end{equation}
	See Proposition~\ref{prop_boundary_classification} for a definition of regular boundaries and a presentation of the boundary classification problem for diffusions on $\Gamma $. 
\end{xenumerate}

The term general is used to refer to the (potentially)
generalized nature of the operators $(\Lop_e)_{e\in E} $.
If $\rho = 0 $, time the process spends at $\bv $ is almost surely of zero Lebesgue measure.
In that case, we say the process is non-sticky at $\bv $ and the condition reads:
\begin{equation}
	\label{eq_def_lateral2}
	\sum_{e' \in E} \beta_{e'} f'_{e'}(0) = 0.
\end{equation}
If the diffusion defined above satisfies $\s_e(x)=x $, for all $x \in O_e $ and $e\in E $ is called \textit{on Natural Scale on Edges} (or \textit{NSE}). 

According to the representation $\Gamma = \msf \Gamma/\sim$, a process $\mX$ on $\Gamma$ can be factorized into a pair of processes $(J, X)$, where $J:\IR_+ \mapsto E $ and $X:\IR_+ \to \IR_+ $.
For all $t\ge 0 $, $J(t) $ is the edge on which $\mX_t $ is located and
$X_t $ is the distance to origin of $\mX_t $, i.e., $X_t = d(\mX_t,\bv) $.

\medskip

We now comment on the Feller and Feller--Dynkin character of such processes.
Assume $\mX $ is defined on the probability space $\mc P_{\bx} = (\Omega,\process{\bF_t},\Prob_{\bx}) $,
such that $\mX_0 = \bx $, $\Prob_{\bx} $-almost surely. 
The \emph{semigroup} of $\mX $ is the family of functionals $\process{P_t} $ on
$C(\Gamma) $ defined as 
\begin{equation}
		P_t f(\bx) := \Esp_{\bx} \sqbraces{ f(\mX_t) },
		\quad \text{for all } f\in C(\Gamma),
		\text{ } \bx \in \Gamma,
		\text{ and } t\ge 0.
\end{equation}
Following \cite[Section~III.6]{RogWilV1}, the diffusion $\mX $ is called \textit{Feller} if it  satisfies
$P_t \braces{C_b(\Gamma)} \subset C_b(\Gamma) $ and \textit{Feller--Dynkin} if it satisfies $P_t \braces{C_0(\Gamma)} \subset C_0(\Gamma) $.  

As in the case of one-dimensional diffusions, whether a diffusion on $\Gamma$ is Feller--Dynkin depends entirely on its boundary behavior (see \cite[Theorem~1.1]{criens2023feller} for the one-dimensional case). This is evident from the following proposition. The proof is provided in Appendix~\ref{app_FD}.

\begin{proposition}
	\label{prop_FD}
	A regular diffusion on $\Gamma$ is Feller--Dynkin if and only if all open boundaries are natural.
\end{proposition}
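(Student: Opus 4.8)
The plan is to prove both implications separately, reducing the problem on $\Gamma$ to the well-understood one-dimensional boundary theory on each edge. Recall that Feller--Dynkin means $P_t(C_0(\Gamma)) \subset C_0(\Gamma)$, which encodes two distinct requirements: continuity of $P_t f$ and the vanishing-at-infinity property. The continuity is essentially a local/junction issue and will hold for any regular diffusion (it is tied to the gluing condition being well-posed), whereas the vanishing at infinity is a purely boundary-theoretic condition. The relevant boundaries here are the \emph{open} ones, i.e., the endpoints $l_e = \infty$ (or finite endpoints that the process can reach); the claim is that the only obstruction to being Feller--Dynkin is the presence of a non-natural open boundary, exactly as in the one-dimensional case from \cite{criens2023feller}.

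For the forward direction ($\Rightarrow$), I would argue by contraposition: suppose some open boundary $(e, l_e)$ is \emph{not} natural. By the boundary classification (Proposition~\ref{prop_boundary_classification} in the appendix), a non-natural open boundary is either an entrance or a regular/exit-type boundary that the process can reach or be instantaneously influenced by. The strategy is to exhibit a test function $f \in C_0(\Gamma)$ and a point $\bx$ (or a sequence $\bx_n$ escaping to infinity along edge $e$) such that $P_t f(\bx)$ fails to vanish — concretely, using the one-dimensional theory, when the boundary is not natural the process started near $l_e$ has nontrivial mass returning to a fixed compact region in finite time $t$, so $P_t f$ does not decay along that edge. This reuses the single-edge Feller--Dynkin criterion of \cite{criens2023feller} by restricting attention to the diffusion killed upon hitting a small ball $B(\bv,\delta)$ around the vertex, which behaves exactly like a one-dimensional diffusion on $O_e$.

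For the converse ($\Leftarrow$), assume all open boundaries are natural. I would show $P_t f \in C_0(\Gamma)$ for $f \in C_0(\Gamma)$ in two steps. First, \emph{continuity} of $P_t f$: away from the junction this follows from the one-dimensional Feller--Dynkin property on each edge, and at the junction $\bv$ it follows from the regularity of the diffusion together with the gluing condition~\eqref{eq_def_lateral} and the explicit control on hitting times and Green functions in~\eqref{eq_scalespeed} — the probabilistic continuity $\bx \mapsto \Prob_{\bx}(\,\cdot\,)$ at $\bv$ is exactly what regularity guarantees. Second, \emph{vanishing at infinity}: fix $\varepsilon > 0$ and choose a compact $K \subset \Gamma$ with $|f| < \varepsilon$ off $K$. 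For $\bx$ far out on edge $e$, decompose $P_t f(\bx)$ according to whether the process has exited a large ball by time $t$; naturalness of the boundary forces $\Prob_{\bx}(T_K \le t) \to 0$ as $\bx \to l_e$ along $e$ (a natural boundary cannot be reached and the process cannot traverse from the boundary region back to $K$ in bounded time with non-vanishing probability), so $P_t f(\bx) \to 0$.

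The main obstacle I anticipate is the behavior at the junction vertex $\bv$, specifically verifying continuity of $P_t f$ at $\bv$ and making the single-edge reduction rigorous across the gluing condition. Unlike the edge interiors, where the process is genuinely one-dimensional and classical results apply verbatim, the junction couples all edges through~\eqref{eq_def_lateral}, and the (possible) stickiness parameter $\rho$ affects the local time spent at $\bv$. The clean way around this is to use the strong Markov property at the hitting time $T_{B(\bv,\delta)^c}$ (or at $T_\bv$) to factor any path through the vertex, reducing the global estimate to a finite sum of one-dimensional estimates on the individual edges $O_e$, where the hypotheses of \cite{criens2023feller} apply directly. I expect this decomposition — not the boundary analysis itself — to be the technical heart of the argument.
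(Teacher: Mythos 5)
Your proposal follows the same overall architecture as the paper: continuity of $P_t f$ is dispatched once and for all by the fact that every regular diffusion on $\Gamma$ is Feller (this is the paper's Proposition~\ref{prop_Feller_property}, proved by a strong Markov/hitting-time argument rather than via the gluing condition), and the decay-at-infinity half is reduced to the one-dimensional boundary criterion of \cite{criens2023feller} by localizing to a single edge. The execution differs in two respects worth noting. First, the paper does not argue by contraposition with an explicit test function; instead it pivots through the uniform Laplace-transform condition~\eqref{eq_FD_condition}, $\lim_{x\to\infty}\Esp_{(e,x)}\sqbraces{e^{-\lambda\tau_{\by}}}=0$, proving (Lemma~\ref{lem_FD_condition}) that it is equivalent to the Feller--Dynkin property via a resolvent--supermartingale and optional-stopping argument in one direction and a Markov-inequality estimate $\Prob_{\bx}(\tau_{\by}<t)\le e^{a^2}\Esp_{\bx}\sqbraces{e^{-a\tau_{\by}}}$ in the other, and separately equivalent to naturalness of the infinite boundaries by \cite[Lemma~2.2]{criens2023feller}; your direct hitting-probability formulation is morally the same quantity. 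Second, you omit the change-of-scale step (Proposition~\ref{prop_change_of_scale}): the paper first reduces to the NSE case, where every \emph{finite} open boundary is automatically natural, so that only infinite boundaries remain and the natural-scale lemmas of \cite{criens2023feller} apply verbatim. This step is not cosmetic --- since $C_0(\Gamma)$ only constrains behavior as $x\to\infty$, your sketch does not explain how a non-natural \emph{finite} open boundary obstructs the Feller--Dynkin property, and the change of scale is precisely what removes that case. Finally, your anticipated ``technical heart'' at the junction largely evaporates in the paper's treatment: cross-edge hitting is controlled by the one-line bound $\Esp_{(e,x)}\sqbraces{e^{-\lambda\tau_{(e',y)}}}\le\Esp_{(e,x)}\sqbraces{e^{-\lambda\tau_{\bv}}}$, after which everything is a single-edge, one-dimensional statement.
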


Throughout this paper we make the following assumption on the boundary behavior of the process $\mX $. 
This simplifies our analysis significantly as we are not required to handle boundary behaviors at regular boundaries. 
Moreover, by virtue of Proposition~\ref{prop_FD}, such diffusions are Feller--Dynkin
which allows to use specific results adapted for such processes. 

\begin{SA}
	\label{assumption_regularity}
	The diffusion has only natural boundaries and therefore, by Proposition~\ref{prop_FD}, it is Feller--Dynkin. 
\end{SA}

This assumption has several important implications:
\begin{enumerate}
\item
The scale function \(s\) is bounded in the vicinity of \(\bv\). 
Both the scale \(s\) and inverse scale \(q\) are continuous on \(\Gamma^{\circ}\) and \((s(\Gamma))^{\circ}\), respectively.
Since \((s_e;\, e \in E)\) act via their right-derivatives, we assume \(s_e(0) = 0\) for all \(e \in E\) so that $s \in C(\Gamma) $. 
	
	\item Since there are no regular boundaries a general diffusion on $\Gamma $ is fully determined by \ref{item_A1}--\ref{item_A2}. No additional boundary conditions
	like \ref{item_A3} are needed. 
	
	\item By virtue of Proposition~\ref{prop_FD}, since the diffusion is Feller--Dynkin, we work with the \(C_0\)-generator \(\Lop\) and its composites \((\Lop_e;\, e \in E)\). 
	These are the operators defined as 
	\begin{equation}
		\Lop f(e,x) =\Lop_e f(e,x) = \frac{1}{2} \D_{m_e} \D_{s_e} f(e,x), 
		\quad \text{for all } (e,x)\in \Gamma, 
		\text{ and } f\in \dom_{C_0}(\Lop) ,
	\end{equation}
	where
	\begin{align}
		\dom_{C_0}(\Lop) :={} & \bigg\{f = \bigoplus_{e\in E} f_e :\; f_e\in 
			\dom_{C_0}(\Lop_e), \; \forall\, e\in E ;\;\\
			& \quad f_e(0)=f_{e'}(0) \text{ and } \Lop_e f_e(0)= \Lop_{e'} f_{e'}(0),\; \text{for all } e,e'\in E;\;
			\text{and \eqref{eq_def_lateral} holds} \bigg\},
		\\ 
		\dom_{C_0}(\Lop_e) :={} & \cubraces{ f \in C_0([0,\infty)):\; \Lop_e f \in C_0([0,\infty)) }.
	\end{align}
\end{enumerate}

To motivate the general framework we adopt, let us present an example of a diffusion process on \(\Gamma\), defined by a family of non-classical second order differential operators.

\begin{example}
	Consider the general diffusion on 
	\begin{equation}
		\label{eq_gamma_unconstrained}
		\Gamma^{0} := \braces{\bigsqcup_{e\in E} [0,\infty)}/\sim
	\end{equation}
	defined by~\ref{item_A1}--\ref{item_A3}, where the operators \((\Lop_e;\, e \in E)\) are defined as follows:
	\begin{itemize}
		\item For a specific edge $e_1 \in E $, let $\Lop_{e_1} = \frac 12 \D_{\m_{e_1}} \D_{\s_{e_1}} $,  where the pair $(\s_{e_1},\m_{e_1}) $ is defined by
		\begin{equation}
			\s'_{e_1}(x) := \exp \braces{ - \int_{0}^{x} \frac{2b(y)}{ \sigma^{2}(y)} \vd y }, \quad
			\m_{e_1}(\rd x) := \frac{\rd x}{\s'_{e_1}(x)\sigma^{2}(x)},
			\quad \forall\,x >0,
		\end{equation}
		where $b,\sigma $ are the functions defined as $b(x) := (1 - x)$ and $\sigma(x) := \sqrt{x+1} $, for all $x> 0 $.
		\item For another edge  $e_2 \in E $, with $e_2 \not = e_1 $, let $\Lop_{e_2} = \frac 12 \D_{\m_{e_2}} \D_{\s_{e_2}} $, where the pair $(\s_{e_2},\m_{e_2}) $ is defined by
		\begin{equation}
			\s_{e_2}(x) := x, \quad
			\m_{e_2}([a,b]) := F_{C_{1/3}}(b)-F_{C_{1/3}}(a), 
			\quad \forall\,x,a,b>0, 
		\end{equation}
		where $F_{C_{1/3}} $ is the Cantor--Lebesgue function (see \cite[pp. 38, 126]{Stein2005}). 
		\item For all  $e \in E \setminus \{e_1,e_2\} $, let $\Lop_{e} = \frac 12 \D_{\m_e} \D_{\s_e} $, where the pair $(\s_e,\m_e) $ is defined by
		\begin{equation}
			\s_e(x) := x, \quad
			\m_e(\rd x) := \vd x + \delta_{1}(\rd x),
			\quad \forall\,x>0.
		\end{equation}
	\end{itemize}
\end{example}

This diffusion defined above behaves on the edge $e_1 $ like a translated CIR
process; on the edge $e_2 $ like a Brownian motion slowed on the
Cantor set (see \cite[Section~8]{ankirchner2020functional}); and on all remaining edges like a Brownian motion with a sticky threshold located at $1$.
Note that the operators \(\Lop_e\) for \(e \neq e_1\) are not classical second order differential operators. 
Also, since $s_{e_1} $ is not a natural scale, the diffusion is not NSE. 

Let us comment on our presentation in relation to the existing literature.

\begin{conventions}
	\begin{enumerate}
		\item There is some ambiguity in the literature regarding the notion of diffusion. Our definition coincides with those in~\cite{ItoMcKean96, Kal, RogWilV1}, while in~\cite{RevYor}, only the weak Markov property is required.
		
		\item In this paper, we adopt the same scaling convention for the speed measure as in~\cite{Kal} and Rogers and Williams~\cite{RogWilV2}.
		
		\item The notion of \textit{Natural Scale on Edges} (NSE) should not be confounded with the notion of being on natural scale. 
		To illustrate this distinction, consider a skew Brownian motion, which can be seen as a Walsh Brownian motion on a star graph comprised of two edges. 
		By definition, this process is NSE, but if $\beta_e \not= 1/2$, it is not on natural scale (see~\cite{harrisson1981onskew}). 
	\end{enumerate}
\end{conventions}

\subsection{(Sticky) Walsh Brownian motion}
\label{ssec_walsh}

Consider the star-shaped graph $\Gamma^{0} $ defined in~\eqref{eq_gamma_unconstrained}. 
We define the simplest (sticky) Feller--Dynkin diffusion on the star-shaped graph: the (sticky) Walsh Brownian motion on $\Gamma^{0} $ (for historical references, see~\cite{Barlow1989walsh,walsh1978diffusion}). 
This diffusion behaves like a Brownian motion on each edge of $\Gamma^{0}$, with excursions departing from $\bv$ choosing randomly among edges $e \in E$ according to fixed probabilities $(\beta_e;\, e \in E)$. 
Additionally, if $\rho > 0$, the diffusion spends a positive amount of time at $\bv$ upon contact.  
The last two properties are inferred from the forthcoming equation~\eqref{eq_relation_occt_loct} and Corollary~\ref{cor_edge_transition_NSE}.

The \textit{sticky Walsh Brownian motion} on $\Gamma^{0} $ is the general diffusion process defined by:
\begin{enumerate}
	\item The family of operators $(\Lop_e;\; e\in E) $, where
	$\Lop_e = \frac{1}{2}\D_x \D_x $, for all $e\in E $.
	This means that $s_e(x)=x $ and $m_e(\rd x) = \vd x $ for all $x> 0 $ and $e\in E $ which by virtue of Proposition~\ref{prop_boundary_classification}, qualifies all boundaries as natural. Its $C_0$-generator therefore reads:
	\begin{equation}
		\label{eq_generator_W}
		\begin{aligned}
			\Lop_e f_{e}(x) &= \frac{1}{2} f_{e}''(x),
			& \forall\, f&\in \dom_{C_0}(\Lop_e),
			& \forall\, x& \ge 0,
			& \forall\, e& \in E,
		\end{aligned}
	\end{equation}
	where $f',f'' $ are the first and second right-derivatives of $f$ and where
	\begin{equation}
		\label{eq_domain_W}
		\begin{aligned}
			\dom_{C_0}(\Lop_e) &= C_{0}^{2}([0,\infty)),
			& \forall\,& e\in E.
		\end{aligned}
	\end{equation}
	\item The gluing condition at the vertex $\bv $: 
	\begin{equation}
		\label{eq_lateral_W}
		\begin{aligned}
			 \sum_{e''\in E} \beta_{e''} f'_{e''}(0) &= \rho f_{e}''(0),
			& \forall\, e&\in E,
		\end{aligned}
	\end{equation}
\end{enumerate}

When $\rho = 0 $, this process 
is a \textit{(non-sticky) Walsh Brownian motion} on $\Gamma^{0} $. 

\begin{remark}
	The sticky Walsh Brownian motion is a Feller--Dynkin diffusion on $\Gamma^{0} $ (by Proposition~\ref{prop_FD}) and is NSE since $\s_e(x)=x $ and $\m_e(\rd x) =\rd x $, for all $e\in E $ and $x\ge 0 $. 
\end{remark}

To explicitly indicate the dependence of (non-sticky) Walsh Brownian motion on its parameters, we will refer to the process defined above as the $(\beta_e;\, e \in E)$-Walsh Brownian motion on $\Gamma^{0} $ or the Walsh Brownian motion on $\Gamma^{0} $, of bias parameters $(\beta_e;\, e \in E)$. 

\subsection{Main results}
\label{ssec_mainresults}

We now present the main results of this paper. 
Let $\mc P_{\bx} = (\Omega, \process{\bF_t}, \Prob_{\bx})$ be a probability space sufficiently rich to support our analysis.
Since the Walsh Brownian motion is defined on unbounded graph, let us note
$\Gamma$ the star-graph defined in~\eqref{eq_Gamma} and $\Gamma^{0}$ the unbounded
star-graph defined in~\eqref{eq_gamma_unconstrained} so that $\Gamma \subset \Gamma^{0} $. 

Let $s $ be the scale of $\mX $ defined as $s(e,x) := (e,s_e(x)) $, for all $(e,x)\in \Gamma $ and $q $ the inverse scale of $\mX $. 
It is the function $q:\Gamma \mapsto s(\Gamma)$ defined as
\begin{equation}
	\label{eq_q}
	q(e,s_e(x)) := q(e,x), \quad \text{for all } (e,x) \in s(\Gamma). 
\end{equation}

All proofs are deferred to Section~\ref{sec_main_proof}.

\begin{theorem}
	\label{thm_main}
	Let \(\mX\) be the regular diffusion on \(\Gamma\) defined on \(\mc P_x\) by~\ref{item_A1}--\ref{item_A2}. 
	There exists a \((\beta_e;\, e \in E)\)-Walsh Brownian motion \(\mW := (J, R)\) on \(\Gamma^{0}\), adapted to \(\process{\bF_{A(t)}}\), defined on an extension of \(\mc P_x\), such that
	\begin{equation}
		\begin{aligned}
			\mX &= q(\mY), 
			& \text{where} \quad 
			\mY_t &= \mW_{\gamma(t)}, 
			\quad \text{for all } t\ge 0. 
		\end{aligned}
	\end{equation}
	Here, \(q\) is the inverse scale of $\mX$, defined in~\eqref{eq_q}, and the process \(t \to \gamma(t)\) is a time-change (in the sense of \cite[Definition~VI.1.2]{RevYor}), defined as the right-inverse (for the definition of right-inverse, see the forthcoming~\eqref{eq_def_right_inverse}) of
	\begin{equation}
		\label{eq_thm_A_timechange}
		\begin{aligned}
			A(t) &:= \sum_{e \in E} \int_{(0,\infty)} \int_{0}^{t} \indicB{J(s)=e} \vd \loct{R}{y}{s} \, \m^{\mY}_e(\rd y) + \frac{\rho^{\mY}}{2} \loct{R}{0}{t}, 
			& t &\ge 0.
		\end{aligned}
	\end{equation}
	In the above expression, \(\rho^{\mY}\) and \((m^{\mY}_e;\, e \in E)\) are the stickiness parameter and speed measures of the NSE diffusion \(\mY = s(\mX)\) (see the forthcoming Proposition~\ref{prop_change_of_scale}), and \((\loct{R}{y}{t};\; y \in \IR,\, t \ge 0)\) is the local time field of \(R\).
	
	\medskip	
	
	Conversely, let $\mW = (J,R) $ be the $(\beta_e;\, e\in E) $-Walsh Brownian motion
	on $\Gamma^{0} $ defined on $\mc P_x $ such that $ \mW_0 = \bx$, $\Prob_{\bx} $-almost surely.
	Also, let $\Gamma $ an open subgraph of $\Gamma^{0} $, $(\s_e,\m_e;\, e\in E) $ a family of scale and speeds on $\Gamma^{0} $, $\rho \ge 0 $, $t\to A(t) $ be the process defined in~\eqref{eq_thm_A_timechange}, and $t \to \gamma(t) $ be its right-inverse.
	Then, the process $\mX := (q(\mW_{\gamma(t)});\, t\ge 0) $ is a regular diffusion on $\Gamma $, adapted to $\process{\bF_{\gamma(t)}} $, defined  by~\ref{item_A1}--\ref{item_A2}. 
\end{theorem}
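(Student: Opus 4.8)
The plan is to prove the two directions separately, with the forward (existence) statement carrying the real work, and I treat it first. \emph{Reduction to natural scale.} I would begin by setting $\mY := s(\mX)$. Since $s$ is continuous and strictly increasing on each edge with $s_e(0)=0$ (Standing Assumption~\ref{assumption_regularity}), it is a homeomorphism $\Gamma \to s(\Gamma)$ that transports the strong Markov property, and by Proposition~\ref{prop_change_of_scale} the image $\mY$ is an NSE diffusion whose speed measures $(m^{\mY}_e)$ and stickiness $\rho^{\mY}$ are exactly those appearing in \eqref{eq_thm_A_timechange}; moreover $\mX = q(\mY)$. It therefore suffices to represent the NSE diffusion $\mY$ as a time-changed $(\beta_e)$-Walsh Brownian motion, which is the Dambis--Dubins--Schwarz-type content of Section~\ref{sec_first_char}.

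\emph{Radial decomposition and time change.} Write $\mY = (J, R^{\mY})$ with $R^{\mY}_t = d(\mY_t,\bv)$. Because $\mY$ is on natural scale on each edge, Dynkin's formula---legitimate here since $\mY$ is Feller--Dynkin by Proposition~\ref{prop_FD}---shows that, away from $\bv$, $R^{\mY}$ carries no bounded-variation part, so its Tanaka decomposition reads $R^{\mY}_t = R^{\mY}_0 + M_t + \Phi_t$, where $M$ is a continuous local martingale and $\Phi$ is nondecreasing and grows only on $\{t : \mY_t = \bv\}$, i.e. a multiple of the local time at $\bv$, exactly as for a reflecting Brownian motion. By Appendix~\ref{app_strict_monotony}, $\qv{M} = \qv{R^{\mY}}$ is strictly increasing (consistent with stickiness, since the sticky set is nowhere dense), so $\gamma := \qv{R^{\mY}}$ is a continuous, strictly increasing time change with continuous right-inverse $A := \gamma^{-1}$; this is what licenses the time-change manipulations. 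Applying the Dambis--Dubins--Schwarz theorem in the reflected setting, in the spirit of \cite[Theorem~IV.47.1]{RogWilV2}---after passing to an extension of $\mc P_x$ to accommodate an auxiliary Brownian motion when $\qv{M}_\infty < \infty$---yields a reflecting Brownian motion $R$ with $R^{\mY}_t = R_{\gamma(t)}$, its local time at $0$ matching the pushforward of $\Phi$.

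\emph{Edge labels and identification of the clock.} I would then recover the full Walsh structure. The time change $\gamma$ maps the excursion intervals of $R^{\mY}$ away from $\bv$ bijectively onto those of $R$, so the edge label $J$ is constant on each; since the excursion-theoretic selection of edges is invariant under both the scale change and the time change, $J$ assigns edge $e$ with probability $\beta_e$, independently across excursions. Hence $\mW := (J, R)$ is a $(\beta_e)$-Walsh Brownian motion and $\mY_t = \mW_{\gamma(t)}$. It remains to check that $A = \gamma^{-1}$, re-expressed through $\mW$, is exactly \eqref{eq_thm_A_timechange}: using the invariance of semimartingale local time under time change, $\loct{R^{\mY}}{y}{t} = \loct{R}{y}{\gamma(t)}$, so evaluating the right-hand side of \eqref{eq_thm_A_timechange} at $\gamma(t)$ gives $\sum_e \int \indicB{J=e}\,\loct{R^{\mY}}{y}{t}\,m^{\mY}_e(\rd y) + \frac{\rho^{\mY}}{2}\loct{R^{\mY}}{0}{t}$, and this equals the real time $t$ precisely by the It\^o--McKean occupation identity for $\mY$---which is where the speed measures and the stickiness parameter $\rho^{\mY}$ are pinned down. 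This establishes that $\gamma$ is the right-inverse of $A$ and closes the forward direction; the converse follows by reversing these steps, invoking Section~\ref{sec_first_char} together with Proposition~\ref{prop_change_of_scale} to verify that $q(\mW_{\gamma(\cdot)})$ is regular and is governed by~\ref{item_A1}--\ref{item_A2} with the prescribed data.

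\emph{Main obstacle.} The hard part is the radial decomposition and the vertex analysis under generalized operators: since $\dom_{C_0}(\Lop)$ need not contain $C^2$ functions, the second-order Taylor expansions used for smooth-coefficient diffusions are unavailable, so the finite-variation part $\Phi$ of $R^{\mY}$ and the coefficient $\rho^{\mY}$ of the vertex local time must instead be extracted from the gluing condition \eqref{eq_def_lateral} via the Dirichlet-problem computations of Appendix~\ref{app_Dirichlet} and Dynkin's operator, mirroring the reflecting-boundary argument of \cite[Theorem~IV.47.1]{RogWilV2}.
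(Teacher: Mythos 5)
There is a genuine gap, and it sits at the decisive step. Your reduction to natural scale via Proposition~\ref{prop_change_of_scale}, the Tanaka decomposition of the radial part, and the reflected Dambis--Dubins--Schwarz representation all track the paper's Theorem~\ref{thm_DDS_Walsh} and Lemma~\ref{thm_graph_DDS} faithfully. But your identification of the clock is circular: to show that the right-inverse of the specific process $A$ in~\eqref{eq_thm_A_timechange} is exactly $\qv{Y}$, you invoke ``the It\^o--McKean occupation identity for $\mY$'', i.e.\ $t = \sum_{e\in E}\int_{(0,\infty)}\int_0^t \indicB{I(s)=e}\vd \loct{Y}{y}{s}\,\m^{\mY}_e(\rd y) + \tfrac{\rho^{\mY}}{2}\loct{Y}{0}{t}$. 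On the star graph this is precisely Proposition~\ref{prop_occupation_ItoMcKean} with $f\equiv 1$, which the paper \emph{deduces from} Theorem~\ref{thm_main}; even in the one-dimensional setting of \cite[Section~5.4]{ItoMcKean96}, the occupation formula is itself a consequence of the time-change representation. So at the one point where the speed measures $\m^{\mY}_e$ and the stickiness $\rho^{\mY}$ must be pinned down, you assume what is to be proved, and your sketch supplies no independent derivation of that identity.

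The paper closes this step by a different mechanism, which your final ``main obstacle'' paragraph gestures at but your proof body never executes: with $H_b(\bx)=\Esp_{\bx}[T_b]$ it forms the uniformly integrable martingale $M_t = H_b(\mX_{t\wedge T_b}) + (t\wedge T_b)$, time-changes it, expands $\wt H_b$ along $\mW$ via the It\^o--Tanaka formula for Walsh Brownian motion (Proposition~\ref{prop_ItoTanaka2}), and forces the finite-variation part to vanish using Lemma~\ref{lem_derivatives_EET}, namely $-\tfrac12 H_b''(e,\rd x)=\m_e(\rd x)$ away from $\bv$ and $-\sum_{e\in E}\beta_e \wt H_b''(e,\{0\})=\rho$; this yields $\wt A(t)\wedge T_b = A(t\wedge \wt T_b)$ and, letting $b\to\infty$, the claimed identification --- the Dirichlet-problem machinery of Appendix~\ref{app_Dirichlet} enters through $H_b$, not through an occupation identity. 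A correct repair of your argument would prove these second-derivative facts and run the martingale cancellation. Two secondary soft spots: your claim that $J$ selects edges ``with probability $\beta_e$, independently across excursions'' is asserted rather than proved --- the paper avoids excursion independence altogether, getting Markovianity of $\mW$ from Dynkin's additive-functional theorem and identifying the law via Proposition~\ref{prop_construction_Walsh} together with the limit $\beta^{\delta}_e\to\beta_e$ of Proposition~\ref{prop_node_infinitesimal}; and the local-martingale property of $\int_0^{\cdot}\indicB{X_s>0}\vd X_s$ needs the excursion-approximation argument of Lemma~\ref{thm_graph_DDS}, since Dynkin's formula alone does not deliver it for generalized operators. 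The converse, finally, is not a mere reversal of steps: it requires Dynkin's theorem plus the uniqueness of the generator established in the forward direction.
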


We now present a characterization in terms of a diffusion with non-sticky vertex whose generator locally coincides with the one of $\mX $, locally, on each $e\in E $. 
This generalizes~\cite[Proposition~4.1]{berry2024sticky} and  \cite[Theorem~4.4]{berry2024sticky}, which considers only diffusions on $\Gamma^{\ext}$ where $(\Lop_e;\, e \in E)$ is comprised exclusively of classical second-order differential operators. 
A classical second order differential operator 
is one of the form
\begin{equation}
	\begin{aligned}
		\Lop &= b \cdot \D_x + \frac{1}{2} \sigma^{2} \cdot \D_{xx},
	\end{aligned}
\end{equation}
where $b$ is a measurable and $\sigma$ a positive measurable function.

\begin{theorem}
	\label{thm_main2}
	Let $\mX $ be the regular diffusion on $\Gamma $ defined defined on $\mc P_x $, by~\ref{item_A1}--\ref{item_A2}. 
	There exists a regular diffusion $\mX^{\circ}=(I^{\circ},X^{\circ}) $ on $\Gamma $,
	adapted to $\process{\bF_{A_{\rho}(t)}} $, defined 
	on an extension of $\mc P_x $ by~\ref{item_A1} and~\eqref{eq_def_lateral2}, 
	such that
	\begin{equation}
			\mX_t = \mX^{\circ}_{\gamma_{\rho}(t)},
			\quad \text{for all } t\ge 0. 
	\end{equation}
	Here, the process $\gamma $ is a time-change, defined as the right-inverse of
	\begin{equation}
		\label{eq_thm_A_timechange2}
			A_{\rho}(t):= t + \frac{\rho^{\mY}}{2} \loct{Y^{\circ}}{0}{t},
			\quad t\ge 0. 
	\end{equation}   
	with $\mY^{\circ} = (I^{\circ},Y^{\circ}) := \s(\mX^{\circ}) $ and $\rho^{\mY} $
	is the stickiness parameter of the NSE diffusion $\mY := \s(\mX) $, defined in the forthcoming  Proposition~\ref{prop_change_of_scale}, and $(\loct{Y^{\circ}}{y}{t};\; y\in \IR,\, t\ge 0) $ is the local time field of $Y^{\circ}$.
	
	\medskip 
	
	Conversely, let $\mX^{\circ} = (I^{\circ},X^{\circ}) $ be the regular diffusion
	on $\Gamma $ defined on $\mc P_x $, by~\ref{item_A1} and~\eqref{eq_def_lateral2}.
	For $\rho\ge 0 $, let $\gamma_{\rho} $ be the time-change defined in~\eqref{eq_thm_A_timechange2}. 
	Then, the process $\mX := (\mX^{\circ}_{\gamma_{\rho} (t)};\, t\ge 0) $ is a regular diffusion on $\Gamma $, adapted to $\process{\bF_{\gamma_{\rho}(t)}} $, defined  by~\ref{item_A1}--\ref{item_A2}.
\end{theorem}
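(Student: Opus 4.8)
The plan is to prove Theorem~\ref{thm_main2} by reducing it to the already-available time-change representation of Theorem~\ref{thm_main} and the change-of-scale Proposition~\ref{prop_change_of_scale}. Both $\mX$ and the target diffusion $\mX^{\circ}$ are governed by the same family $(\Lop_e;\, e\in E)$ on the edges, differing only in their gluing condition at $\bv$ (sticky with parameter $\rho$ versus non-sticky, $\rho=0$). Passing to natural scale on edges via $\mY=\s(\mX)$ and $\mY^{\circ}=\s(\mX^{\circ})$, I would reduce everything to the NSE setting, where the scale is trivial and only the speed measures $(\m_e^{\mY})$ and the stickiness $\rho^{\mY}$ enter. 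The key observation is that both $\mY$ and $\mY^{\circ}$ admit, by Theorem~\ref{thm_main}, representations as the \emph{same} rescaled time-changed Walsh Brownian motion $\mW=(J,R)$: they share the identical edge speed measures and bias parameters, so the ``edge part'' of their additive functionals coincides, and they differ only in the $\bv$-term of the clock~\eqref{eq_thm_A_timechange}.

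\medskip

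First I would write down, from Theorem~\ref{thm_main}, the two clocks explicitly for a common driving Walsh motion $\mW$:
\begin{equation}
	\label{eq_proof_two_clocks}
	\begin{aligned}
		A(t) &= \sum_{e\in E}\int_{(0,\infty)}\int_0^t \indicB{J(s)=e}\vd \loct{R}{y}{s}\,\m_e^{\mY}(\rd y) + \tfrac{\rho^{\mY}}{2}\loct{R}{0}{t},\\
		A^{\circ}(t) &= \sum_{e\in E}\int_{(0,\infty)}\int_0^t \indicB{J(s)=e}\vd \loct{R}{y}{s}\,\m_e^{\mY}(\rd y),
	\end{aligned}
\end{equation}
where $A^{\circ}$ is the clock producing the \emph{non-sticky} NSE diffusion $\mY^{\circ}$ (the second term dropping out because $\rho=0$ for $\mX^{\circ}$). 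Writing $B$ for the common first (edge) term, we have $A=B+\tfrac{\rho^{\mY}}{2}\loct{R}{0}{\cdot}$ and $A^{\circ}=B$. The heart of the argument is then purely a manipulation of time-changes: if $\gamma,\gamma^{\circ}$ denote the right-inverses of $A,A^{\circ}$, I claim that $\mY_t = \mY^{\circ}_{\gamma_{\rho}(t)}$, where $\gamma_{\rho}$ is the right-inverse of $A_{\rho}(t)=t+\tfrac{\rho^{\mY}}{2}\loct{Y^{\circ}}{0}{t}$ as in~\eqref{eq_thm_A_timechange2}. The natural route is to compose the two Walsh-level time-changes: since $\mY_t=\mW_{\gamma(t)}$ and $\mY^{\circ}_u=\mW_{\gamma^{\circ}(u)}$, it suffices to show $\gamma=\gamma^{\circ}\circ\gamma_{\rho}$, equivalently that the clock $A$ factorizes as $A = A_{\rho}\circ A^{\circ}$ after the correct identification of local times. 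Applying $q$ and recalling $\mX=q(\mY)$, $\mX^{\circ}=q(\mY^{\circ})$ then yields $\mX_t=\mX^{\circ}_{\gamma_{\rho}(t)}$.

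\medskip

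The main obstacle, and the step requiring genuine care, is the local-time bookkeeping needed to justify $A=A_{\rho}\circ A^{\circ}$: one must relate the local time $\loct{R}{0}{\cdot}$ of the driving Walsh motion at $\bv$ to the local time $\loct{Y^{\circ}}{0}{\cdot}$ of the intermediate non-sticky diffusion appearing in~\eqref{eq_thm_A_timechange2}. This is precisely the content of the occupation-time/local-time identities for NSE diffusions developed in Section~\ref{sec_further}; I would invoke those to show that under the time-substitution $u=A^{\circ}(t)$ the Walsh local time at $0$ transforms into the $Y^{\circ}$ local time at $0$, so that $\tfrac{\rho^{\mY}}{2}\loct{R}{0}{t}=\tfrac{\rho^{\mY}}{2}\loct{Y^{\circ}}{0}{A^{\circ}(t)}$, and hence $A(t)=A^{\circ}(t)+\tfrac{\rho^{\mY}}{2}\loct{Y^{\circ}}{0}{A^{\circ}(t)}=A_{\rho}(A^{\circ}(t))$. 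The invariance of local time at a point under a time-change that does not charge that point (the edge part $B$ grows only off $\{R=0\}$ away from the support issues, while the sticky term is supported on $\{R=0\}$) is the delicate point, and I would lean on the strict monotonicity result of Appendix~\ref{app_strict_monotony} to guarantee the right-inverses are genuine, continuous time-changes with no pathological flat stretches, so the compositions are well-defined $\Prob_{\bx}$-a.s.

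\medskip

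For the converse direction I would run the same identities in reverse: starting from the non-sticky $\mX^{\circ}$ with $\mY^{\circ}=\s(\mX^{\circ})$, define $\gamma_{\rho}$ from~\eqref{eq_thm_A_timechange2} and set $\mX_t:=\mX^{\circ}_{\gamma_{\rho}(t)}$. That $\mX$ is a regular strong Markov diffusion follows because time-changing a strong Markov process by the right-inverse of a continuous additive functional preserves the strong Markov property (this is standard, e.g.\ \cite[Section~VI.2]{RevYor}), and regularity is inherited since $\gamma_{\rho}$ is strictly increasing and finite. It remains to identify its generator: on each edge the clock $A_{\rho}$ adds mass only at $\bv$, so off $\bv$ the process is an unaltered copy of $\mX^{\circ}$ and therefore still governed by $(\Lop_e;\, e\in E)$, i.e.\ \ref{item_A1} holds; and the added local-time term at $0$ forces exactly the sticky gluing condition~\eqref{eq_def_lateral} with stickiness parameter $\rho$, by the characterization of the $\bv$-behavior via the gluing condition established in Section~\ref{sec_char}. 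This pins down $\mX$ as the diffusion defined by~\ref{item_A1}--\ref{item_A2}, completing the equivalence.
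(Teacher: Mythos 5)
Your proposal is correct and follows essentially the same route as the paper: both construct $\mX^{\circ}$ as the time-change of the \emph{same} driving Walsh Brownian motion by the non-sticky clock $A^{\circ}$ (the edge part of~\eqref{eq_thm_A_timechange}), establish the factorization $A=A_{\rho}\circ A^{\circ}$ via the local-time/time-change identity $\loct{R}{0}{t}=\loct{Y^{\circ}}{0}{A^{\circ}(t)}$ (Lemma~\ref{def_TC_local_time}), and then compose the right-inverses to get $\gamma=\gamma^{\circ}\circ\gamma_{\rho}$ and hence $\mX_t=\mX^{\circ}_{\gamma_{\rho}(t)}$. The converse is likewise handled as in the paper, by preservation of the strong Markov property under time change and identification of the generator.
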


\begin{remark}
	\label{rmk_gamma_characterization}
	We observe that 
	\begin{equation}
			\gamma_{\rho}(t) = \int_{0}^{t} \indicB{X_s \not = 0} \vd s,
			\quad \text{for all } t\ge 0. 
	\end{equation} 
	Indeed, if $\mX = (I,X) $ and considering $\mY = (I,Y) := s(\mX)$, 
	from the forthcoming Remark~\ref{rmk_loct_occt}, we have that
	\begin{equation}
		\label{eq_relation_occt_loct}
			\int^{t}_{0} \indicB{Y_s = 0} \vd s = \frac{\rho^{\mY}}{2} \loct{Y}{0+}{t},
			\quad \text{for all } t\ge 0. 
	\end{equation}
	On the other hand, $A_{\rho} $ is strictly increasing and continuous, hence
	$\gamma_{\rho} $ is a time-change and its proper inverse is $A_{\rho} $.
	Thus, from the forthcoming Lemma~\ref{def_TC_local_time} and~\eqref{eq_relation_occt_loct}, 
	\begin{align}
		t = A_{\rho}(\gamma_{\rho}(t)) 
		& = \gamma_{\rho}(t) 
		+ \frac{\rho^{\mY}}{2} \loct{Y^{\circ}}{0}{\gamma_{\rho}(t)}
		= \gamma_{\rho}(t) 
		+ \frac{\rho^{\mY}}{2} \loct{Y}{0}{t}
		\\&= \gamma_{\rho}(t) - \int_{0}^{t} \indicB{Y_s = 0} \vd s
		= \gamma_{\rho}(t) - \int_{0}^{t} \indicB{X_s = 0} \vd s.
	\end{align}
	This yields the assertion. 
\end{remark}

%% New Section
\section{Some preliminary results}
\label{sec_char}

We now present some properties of diffusions on $\Gamma $. 
In particular, (i) we prove that the quadratic variation of the distance to origin process
is a.s. strictly increasing, (ii) we give probabilistic interpretations for the parameters $\rho $ and $(\beta_e;\, e\in E) $ in the gluing condition~\eqref{eq_def_lateral}, (iii) we prove a construction of (sticky) Walsh Brownian motion from
(sticky) reflected Brownian motion and an edge-transition condition,
(iv) we show that a diffusion on $\Gamma $, under Assumption~\ref{assumption_regularity}, 
is a re-scaled NSE diffusion on $\Gamma $.  

For the rest of this section, we will assume $\mX $ is defined on the probability space $\mc P_{\bx} = (\Omega,\process{\bF_t},\Prob_{\bx}) $, such that $\mX_0 = \bx $ holds $\Prob_{\bx} $-almost surely. 
Let us also introduce notations we adopt throughout the paper.

\begin{notation}
	\label{notation_exit_times}
	Let, 
	\begin{itemize}
		\item $T_0 := \inf \{t\ge 0:\; X_t =0 \} $, 
		\item $T_{\bx} := \inf \{t\ge 0:\; \mX_t = \bx\} $,
		for all $\bx \in \Gamma $,
		\item $T_b := \inf \{t\ge 0:\; X_t \not \in [0,b)\} $,
		for all $b> 0 $,
		\item $H_b(\bx) := \Esp_{\bx} \braces{ T_b} $,
		for all $b> 0 $,
		\item $\beta^{\delta}_{e}
		:= \Prob_{\bv} \braces{I(T^{\mX}_{\delta})=e} $,
		for all $e\in E $ and $\delta>0 $.
		\item $l_* := \inf\{l_e;\, e\in E\} $.
	\end{itemize}
\end{notation} 

In case of ambiguity, to indicate that these quantities are related to a process
$\mX $, we will note them $T^{\mX}_{0} $, $T^{\mX}_{\bx} $, $T^{\mX}_{b} $ and 
$H^{\mX}_{b}(\bx) $.

\subsection{Recalls on time changes}

Let us also recall notions and results on time-changed processes from~\cite{RevYor}.
The latter will allow us to manipulate stochastic integrals and local times
of time-changed processes. 

\begin{definition}
	[\cite{RevYor}, Definition~V.1.2]
	\label{def_TC}
	A time change $\gamma := (\gamma(t);\, t\ge 0) $ is a family of stopping times
	such that the mapping $t\mapsto \gamma(t) $ is almost surely increasing. 
\end{definition}

\begin{definition}
	[\cite{RevYor}, Definition~V.1.3]
	\label{def_TC_continuity}
	Let \( \gamma \) be a time-change. 
	The process \( X \) is said to be \( \gamma \)-continuous if
	\[
	X \text{ is constant on each interval } [\gamma(t-), \gamma(t)], \quad t\ge 0.
	\]
\end{definition}

\begin{example}
	Let $A$ be a family of right-continuous $\process{\bF_t} $-adapted processes,
	and $\gamma $ be its right-inverse, i.e.,
	\begin{equation}
		\label{eq_def_right_inverse}
		\begin{aligned}
			\gamma(t) &:= \inf \cubraces{s\ge 0:\; A(s)>t},
			& t&\ge 0.
		\end{aligned}
	\end{equation}
	From \cite[Proposition~V.1.1]{RevYor}, the process $\gamma $ is a time change
	and the process $A $ is a $\process{\bF_{\gamma(t)}} $-time change. 
\end{example}

\begin{example}
	Let $X$ be a semimartingale of quadratic variation $\qv{X} $ and
	$\varphi $ be the right-inverse of $\qv{X} $,  
	in the sense of~\eqref{eq_def_right_inverse}. 
	The process $\varphi $ is a time change. 
	The process $\qv{X} $ is a $\process{\bF_{\varphi(t)}}$-time change. 
\end{example}

We are now ready to state the results presented in the form of Lemmas.

\begin{lemma}
	[\cite{RevYor}, Proposition~V.1.5]
	\label{def_TC_stochastic_integral}
	Let \( \gamma \) be a.s. finite and \( X \) be a continuous \( (\mathcal{F}_t) \)-local martingale.
	
	\begin{enuroman}
		\item If \( X \) is \( \gamma \)-continuous, then \( X \) is a continuous \( (\mathcal{F}_{\gamma(t)}) \)-local martingale and 
		\[
		\qv{ X_{\gamma(\cdot)}} = \qv{ X}_{\gamma(\cdot)}.
		\]
		
		\item If, moreover, \( H \) is \( (\mathcal{F}_t) \)-progressive and 
		$\int_0^t H_s^2 \vd \qv{X}_s < \infty $, a.s. for every $t$,
		then $\int_0^t H_{\gamma(s)}^2 \vd \qv{X}_{\gamma(s)} < \infty $, a.s. for every $t$, and 
		\begin{equation}
			\begin{aligned}
				\int_0^t H_{\gamma(s)}^2 \vd X_{\gamma(s)}
				&= \int_0^{\gamma(t)} H_{s}^2 \, \vd X_{s},
				& \forall\, t&\ge 0. 
			\end{aligned}
		\end{equation}
	\end{enuroman}
\end{lemma}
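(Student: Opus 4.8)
This is Proposition~V.1.5 of~\cite{RevYor}; I would reprove it as follows. The plan is to establish~(i) first through optional sampling together with a localization, then to read off the bracket identity from uniqueness of the quadratic variation, and finally to obtain~(ii) by approximating $H$ with elementary integrands and passing to the limit through the isometry furnished by~(i).

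First I would record that each $\gamma(t)$ is an $\process{\bF_t}$-stopping time, so $\process{\bF_{\gamma(t)}}$ is a genuine filtration and $X_{\gamma(t)}$ is $\bF_{\gamma(t)}$-measurable. To show $X_{\gamma(\cdot)}$ is a continuous $\process{\bF_{\gamma(t)}}$-local martingale I would localize by $T_n := \inf\{u\ge 0:\; \abs{X_u}\ge n\}$, reducing to a bounded continuous martingale $X$. Such an $X$ is uniformly integrable, so Doob's optional sampling theorem applies to the a.s.\ finite stopping times $\gamma(s)\le\gamma(t)$ and yields $\Esp[X_{\gamma(t)}\mid\bF_{\gamma(s)}]=X_{\gamma(s)}$; hence $X_{\gamma(\cdot)}$ is an $\process{\bF_{\gamma(t)}}$-martingale. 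The $\gamma$-continuity hypothesis enters only for path regularity: right-continuity of $t\mapsto X_{\gamma(t)}$ follows from that of $\gamma$ and continuity of $X$, while at any jump $\gamma(t-)<\gamma(t)$ the constancy of $X$ on $[\gamma(t-),\gamma(t)]$ forces $X_{\gamma(t-)}=X_{\gamma(t)}$, so no jump is created.

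For the bracket identity I would first observe that $\qv{X}$ is itself $\gamma$-continuous, since $\vd\qv{X}_u$ charges no interval on which $X$ is constant; hence $\qv{X}_{\gamma(\cdot)}$ is continuous, adapted and increasing. Applying the martingale half of~(i) to the continuous local martingale $X^2-\qv{X}$, which is again $\gamma$-continuous, shows that $X_{\gamma(\cdot)}^2-\qv{X}_{\gamma(\cdot)}$ is an $\process{\bF_{\gamma(t)}}$-local martingale, and uniqueness of the quadratic variation of $X_{\gamma(\cdot)}$ then forces $\qv{X_{\gamma(\cdot)}}=\qv{X}_{\gamma(\cdot)}$. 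For~(ii) I would establish the change-of-variables identity $\int_0^t f(\gamma(u))\vd\qv{X}_{\gamma(u)}=\int_0^{\gamma(t)}f(u)\vd\qv{X}_u$ for bounded measurable $f$ (again the jump intervals of $\gamma$ contribute nothing on either side), prove the stochastic-integral identity for elementary integrands $H$ where it reduces to a finite sum handled by~(i), and then extend by isometry: writing $M_t:=\int_0^t H_{\gamma(u)}\vd X_{\gamma(u)}$ and $N_t:=\int_0^{\gamma(t)}H_u\vd X_u$, part~(i) and the change of variables give $\qv{M}_t=\int_0^t H_{\gamma(u)}^2\vd\qv{X}_{\gamma(u)}=\int_0^{\gamma(t)}H_u^2\vd\qv{X}_u=\qv{N}_t$, so an $L^2(\vd\qv{X})$-approximation of $H$ by elementary integrands yields $M=N$ up to indistinguishability.

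The hard part will be the careful bookkeeping tied to the $\gamma$-continuity hypothesis: continuity of $X_{\gamma(\cdot)}$, the $\gamma$-continuity of $\qv{X}$, and the change-of-variables formula all hinge on the single fact that the intervals $[\gamma(t-),\gamma(t)]$ over which the clock jumps carry no mass for $X$, for $\qv{X}$, or for the integral. A secondary technical point is checking that the localizing stopping times are compatible with the time change, so that a single localizing sequence reduces $X_{\gamma(\cdot)}$ and its bracket simultaneously; this is handled by noting that the events $\{\gamma(t)\le T_n\}$ are $\bF_{\gamma(t)}$-measurable.
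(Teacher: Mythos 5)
This lemma is not proved in the paper at all: it is imported verbatim from Revuz--Yor, Proposition~V.1.5, and your argument is essentially the standard textbook proof of that result (optional sampling after localization for the martingale property, the coincidence of the intervals of constancy of $X$ and $\qv{X}$ to get $\gamma$-continuity of the bracket, and elementary integrands plus the isometry for the integral identity), so it is correct and takes the same route as the cited source. The only points that would need care in a full write-up are the ones you already flag yourself, namely the compatibility of the localizing sequence with the time change and the fact that the approximation step must run through the identity for elementary integrands rather than through equality of brackets alone.
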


\begin{lemma}
	[\cite{RevYor}, Exercise~VI.1.27]
	\label{def_TC_local_time}
	Let \( \gamma \) be a process of time changes and \( X \) a \( \gamma \)-continuous semimartingale.
	Then it holds that
	\begin{equation}
		\begin{aligned}
			\loct{X_{\gamma(\cdot)}}{y}{t} &= \loct{X}{y}{\gamma(t)},
			& \forall\,& t\ge 0,
			& \forall\,& y\in \IR.			
		\end{aligned}
	\end{equation}
\end{lemma}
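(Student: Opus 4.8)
The plan is to read the identity off Tanaka's formula, transporting the stochastic integral that appears there by means of the time-change rule already recorded in Lemma~\ref{def_TC_stochastic_integral}. Write $X = X_0 + M + V$ for the canonical decomposition of the continuous semimartingale, with $M$ a continuous local martingale and $V$ of finite variation. Since $X$ is $\gamma$-continuous in the sense of Definition~\ref{def_TC_continuity}, it is constant on every interval $[\gamma(t-),\gamma(t)]$; hence the composed process $X_{\gamma(\cdot)}$ is itself continuous, and $M_{\gamma(\cdot)}$ is a continuous local martingale by Lemma~\ref{def_TC_stochastic_integral}(i). We may assume $\gamma(0)=0$, as holds for the right-inverses occurring in our applications.

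First I would apply Tanaka's formula at level $y$ to $X_{\gamma(\cdot)}$ evaluated at time $t$,
\[
\abs{X_{\gamma(t)}-y} = \abs{X_0-y} + \int_0^t \sgn(X_{\gamma(s)}-y)\vd X_{\gamma(s)} + \loct{X_{\gamma(\cdot)}}{y}{t},
\]
and, separately, to $X$ itself evaluated at the stopping time $\gamma(t)$,
\[
\abs{X_{\gamma(t)}-y} = \abs{X_0-y} + \int_0^{\gamma(t)} \sgn(X_s-y)\vd X_s + \loct{X}{y}{\gamma(t)}.
\]
The left-hand sides and the initial terms coincide, so the claim reduces to matching the two stochastic-integral terms. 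To that end, the integrand $H_s := \sgn(X_s-y)$ is bounded and $\process{\bF_t}$-progressive, so the integrability hypothesis of Lemma~\ref{def_TC_stochastic_integral}(ii) holds for $M$ and yields $\int_0^t H_{\gamma(s)}\vd M_{\gamma(s)} = \int_0^{\gamma(t)} H_s \vd M_s$. For the finite-variation part I would argue directly: as $V$ is constant on each $[\gamma(t-),\gamma(t)]$, the Lebesgue--Stieltjes measure induced by $V_{\gamma(\cdot)}$ is the pushforward of $\vd V$ under $\gamma$, and the change of variables $u=\gamma(s)$ gives $\int_0^t H_{\gamma(s)}\vd V_{\gamma(s)} = \int_0^{\gamma(t)} H_s\vd V_s$. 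Summing the two contributions equates the stochastic-integral terms, and subtracting the two Tanaka identities leaves precisely $\loct{X_{\gamma(\cdot)}}{y}{t} = \loct{X}{y}{\gamma(t)}$.

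The difficulty here is bookkeeping rather than conceptual. Three points require care: Lemma~\ref{def_TC_stochastic_integral} is phrased for local martingales, so the finite-variation part must be treated by hand as above; the hypothesis of $\gamma$-continuity is exactly what guarantees both the continuity of $X_{\gamma(\cdot)}$ and the pushforward identity for $\vd V$, and prevents $X_{\gamma(\cdot)}$ from accruing spurious local time on the flat stretches corresponding to jumps of $\gamma$, so it must be invoked explicitly; and a single convention for the version of the local time (here the right-continuous-in-$y$ version attached to the $\sgn$ convention in Tanaka's formula) must be fixed on both sides so that the identity holds for every $y$. A fully equivalent alternative, which sidesteps the finite-variation bookkeeping, is to combine the occupation-times formula with the identity $\qv{X_{\gamma(\cdot)}}_{\cdot} = \qv{X}_{\gamma(\cdot)}$ from Lemma~\ref{def_TC_stochastic_integral}(i): this gives $\int_{\IR}\phi(y)\loct{X_{\gamma(\cdot)}}{y}{t}\vd y = \int_{\IR}\phi(y)\loct{X}{y}{\gamma(t)}\vd y$ for all continuous $\phi$, whence equality for almost every $y$, upgraded to all $y$ by right-continuity.
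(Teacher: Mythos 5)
The paper does not prove this lemma at all: it is stated as a citation to Revuz--Yor, Exercise~VI.1.27, so there is no internal proof to compare against. Your argument is the standard solution to that exercise and is correct: apply Tanaka's formula to $X_{\gamma(\cdot)}$ at time $t$ and to $X$ at the stopping time $\gamma(t)$, identify the two stochastic integrals via the time-change rule, and subtract. The alternative route you sketch through the occupation-times formula and $\qv{X_{\gamma(\cdot)}} = \qv{X}_{\gamma(\cdot)}$ is also sound, with the a.e.-to-everywhere upgrade justified by c\`adl\`ag regularity of the local time in the space variable.

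One step deserves an explicit line rather than an assertion: you invoke Lemma~\ref{def_TC_stochastic_integral}(i) for $M$ and the pushforward identity for $V$, but both require that $M$ and $V$ are \emph{separately} $\gamma$-continuous, whereas the hypothesis only gives $\gamma$-continuity of $X$. This does follow, but via an argument: on an interval where $X$ is constant, $\qv{X} = \qv{M}$ is constant, hence $M$ is constant there (a continuous local martingale with flat quadratic variation on an interval is flat on that interval, \cite[Proposition~IV.1.13]{RevYor}, which holds simultaneously for all random intervals a.s.), and then $V = X - X_0 - M$ is constant as well. With that sentence added, and the convention $\gamma(0)=0$ you already note (without which the right-hand side should read $\loct{X}{y}{\gamma(t)} - \loct{X}{y}{\gamma(0)}$), the proof is complete. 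Note also that the displayed formula in Lemma~\ref{def_TC_stochastic_integral}(ii) as printed in the paper carries spurious squares on the integrand; you are right to use it in its correct form $\int_0^t H_{\gamma(s)}\vd X_{\gamma(s)} = \int_0^{\gamma(t)} H_s \vd X_s$.
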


The following result will justify all forthcoming time-change manipulations. 
The proof is deferred to Appendix~\ref{app_strict_monotony}. 

\begin{proposition}
	\label{prop_strict_monotony}
	Let $\mX = (I,X) $ be a regular NSE diffusion on $\Gamma $. 
	The mapping $t \mapsto \qv{X}_t $ is almost surely strictly increasing. 
	Hence, its right-inverse $t \mapsto \varphi(t) $ is also strictly increasing, which 
	qualifies them as proper inverses, i.e., $\varphi(\qv{X}_t) = \qv{X}_{\varphi(t)} = t$, for all $t\ge 0 $. 
\end{proposition}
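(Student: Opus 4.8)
The plan is to prove that for a regular NSE diffusion $\mX = (I,X)$ on $\Gamma$, the quadratic variation $\qv{X}$ of the distance-to-origin process $X$ is almost surely strictly increasing. The natural strategy is to argue by contradiction: suppose that with positive probability there is a nondegenerate interval $[t_1,t_2]$ on which $\qv{X}$ is constant. On such an interval the continuous local martingale part of $X$ is frozen, so $X$ evolves only through its finite-variation part; I would first reduce to showing that such frozen intervals cannot occur while $X$ is in the interior of an edge, and then handle the boundary point $0$ separately.

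First I would recall that since $\mX$ is NSE, on each edge $e$ the local generator is $\tfrac12 \D_{m_e}\D_x$, so in the interior of an edge the process $X$ behaves (up to the usual scale/speed description) like a diffusion on natural scale, hence is a local martingale there. The representation~\eqref{eq_scalespeed} with $s_e(x)=x$ gives $\Esp_{\bx}[T_b\wedge T_a]=2\int_a^b G^{(e)}_{a,b}(x,y)\,m_e(\rd y)$, which is strictly positive whenever $m_e$ charges $(a,b)$; regularity of the diffusion (the assumption $\Prob_{\bx}(T_{\by}<\infty)>0$ for all interior $\bx$) forces $m_e$ to have full support on each edge, since a gap in the support of $m_e$ would make a subinterval unreachable or instantaneously traversed, violating regularity. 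The key point is that $\qv{X}$ increases exactly at the speed dictated by $m_e$: using the It\^o--McKean-type identity, $\qv{X}_t = \int X$-occupation against $m_e$, so constancy of $\qv{X}$ on $[t_1,t_2]$ would force $X$ to occupy (in the local-time sense) only a set of $m_e$-measure zero during that interval. Because $m_e$ has full support and $X$ is continuous and genuinely diffusive in the interior, this cannot happen on a nondegenerate time interval.

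For the boundary vertex, I would argue that the process cannot remain at $\bv$ (or oscillate near it) over a time interval without accumulating quadratic variation, \emph{in the NSE case}: when the process is at $\bv$ the stickiness contributes occupation time but, crucially, any departure into an edge immediately engages the local-martingale dynamics governed by $m_e$. The careful handling here is to show that the excursions away from $\bv$ are dense enough in time that no interval $[t_1,t_2]$ can be entirely absorbed by the sticky boundary behavior in a way that freezes $\qv{X}$; this follows from the fact that the set $\{t: X_t=0\}$ carries no interior (the process leaves $\bv$ immediately after each visit, a consequence of regularity and the It\^o excursion structure of Walsh-type diffusions). Combining the interior and boundary analyses, any putative frozen interval of $\qv{X}$ leads to a contradiction with regularity.

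The main obstacle I anticipate is the interior argument at the level of generality of \emph{generalized} speed measures $m_e$: one must rule out that $X$ spends a positive-length time interval inside a region where it genuinely moves but $\qv{X}$ nonetheless stays constant. This is delicate because $m_e$ may have atoms and Cantor-type singular parts (as in the motivating example), so I cannot simply invoke absolute continuity. The cleanest route is probably to express $\qv{X}$ via the local time field of $X$ against $m_e$ (an occupation-times identity for NSE diffusions), reduce strict monotonicity of $\qv{X}$ to strict positivity of the local time of $X$ charged against $m_e$ over any time interval, and then use the strong Markov property together with the regularity hypothesis to show the diffusion genuinely sweeps out a set of positive $m_e$-measure on every nondegenerate time interval. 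Once this occupation identity is in place, the passage from strict monotonicity of $\qv{X}$ to its right-inverse $\varphi$ being a proper inverse (i.e.\ $\varphi(\qv{X}_t)=\qv{X}_{\varphi(t)}=t$) is a standard deterministic fact about strictly increasing continuous functions and their inverses.
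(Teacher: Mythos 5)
There is a genuine gap, and it sits exactly where your plan says ``the cleanest route is probably to\dots''. The identity you want to build on is misstated: for an NSE diffusion it is the \emph{elapsed time}, not the quadratic variation, that integrates the local time field against the speed measure (this is the paper's second occupation times formula, $t=\sum_e\int L^{(e,y)}_t\,\m_e(\rd y)+\tfrac{\rho}{2}L^{0+}_t$), whereas $\qv{X}_t=\int L^{y}_t\,\rd y$ against \emph{Lebesgue} measure by the occupation density formula for semimartingales. With the roles swapped, your deduction ``constancy of $\qv{X}$ forces $X$ to occupy only an $\m_e$-null set'' does not follow. Worse, the occupation identity for NSE diffusions on $\Gamma$ is itself established in this paper \emph{via} the time-change representation, whose manipulations are justified precisely by the present proposition; so invoking it here is circular unless you localize to excursions strictly inside an edge and use only the classical one-dimensional It\^o--McKean theory there. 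Finally, the decisive step --- ``use the strong Markov property together with the regularity hypothesis to show the diffusion genuinely sweeps out a set of positive $\m_e$-measure on every nondegenerate time interval'' --- is the entire content of the proposition and is left unproven; ``genuinely diffusive'' is not an argument when $\m_e$ may be atomic or Cantor-like.

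For comparison, the paper's proof avoids occupation measures altogether. It first proves (Lemma~\ref{lem_qv_null}) that $\qv{X}$ is constant on $[s,t]$ if and only if $X$ is constant there: writing $X=X_0+Z+\tfrac12 L^{0}(X)$ with $Z=\int\indicB{X>0}\vd X$ a continuous local martingale satisfying $\qv{Z}=\qv{X}$, one applies the standard fact that a continuous local martingale is constant exactly where its bracket is, and a short case analysis transfers constancy between $Z$ and $X$ (your observation that the finite-variation part is also frozen is the same idea). It then rules out $X$ being constant on a nondegenerate interval with positive probability by localizing to a relatively compact neighbourhood, noting the stopped process is Feller--Dynkin, and using Blumenthal's zero--one law to show the hitting times of points $\bx_n\to\bx$ tend to $0$ almost surely --- i.e.\ regularity forces immediate escape from every state. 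If you want to salvage your outline, replace the $\m_e$-occupation step by exactly this kind of ``a regular diffusion cannot sit still'' argument; the reduction of strict monotonicity of $\qv{X}$ to non-constancy of $X$ is the part of your sketch that is sound.
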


\subsection{Infinitesimal behavior at $\bv $}

For the next result we may lift Assumption~\ref{assumption_regularity} 
and only assume that $\mX $ is a regular diffusion on $\Gamma $.
The reason is that these result involve only localized versions of the process
by $T_{\delta} $ with $\delta \to 0 $,
which by virtue of Proposition~\ref{prop_FD} are necessarily Feller--Dynkin 
when $\delta $ is small enough.

\begin{proposition}
	\label{prop_node_infinitesimal}
	It holds that
	\begin{enuroman}
		\item $ \lim_{\delta \to 0} \delta^{-1} H_{\delta}(\bv)
		= \rho$, \label{item_node_sticky}
		\item $\lim_{\delta\to 0} \beta^{\delta}_{e} = \beta_{e}$,
		for all $e\in E $. \label{item_node_skew}
	\end{enuroman}
\end{proposition}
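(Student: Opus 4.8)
The plan is to establish the two infinitesimal quantities by extracting them from the scale and speed data via the explicit formulas in \eqref{eq_scalespeed}, applied to a small ball $B(\bv,\delta)$ around the junction vertex, and then passing to the limit $\delta \to 0$. The governing identities are the gluing condition \eqref{eq_def_lateral} together with the hitting probability and mean-exit-time formulas; the parameters $\rho$ and $(\beta_e)_{e\in E}$ appear in the gluing condition, so the task is to show that the geometric/probabilistic quantities $\delta^{-1}H_\delta(\bv)$ and $\beta^\delta_e$ converge to them. Because both claims involve only the process stopped at $T_\delta$ for small $\delta$, I may (as the excerpt notes) assume Feller--Dynkin behavior and work with the $C_0$-generator and Dynkin's formula freely.

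For part \ref{item_node_skew}, I would proceed by a harmonic-function/scale argument. Fix $e\in E$ and build a function $h$ that is harmonic for $\Lop$ (i.e.\ $\Lop h = 0$) on the punctured ball $B(\bv,\delta)\setminus\{\bv\}$, namely affine in the scale on each edge: $h_{e'}(x)=c_{e'}(s_{e'}(x)-s_{e'}(0))$, normalized so that $h$ equals $1$ on the edge $e$ at radius $\delta$ and $0$ on every other edge at radius $\delta$, with the junction value and the gluing condition \eqref{eq_def_lateral} (in its non-sticky guise, since $\Lop h(\bv)=0$) pinning down the constants. Applying the optional stopping/Dynkin argument to $h(\mX_{t\wedge T_\delta})$ started at $\bv$ gives $\beta^\delta_e = \Prob_{\bv}(I(T_\delta)=e) = h(\bv)$ expressed through the scale values $s_{e'}(\delta)$. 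As $\delta\to 0$, the continuity and the assumption $s_{e'}(0)=0$ force the ratios $\big(\beta_{e}/s_{e}'(0+)\big)$-type expressions to collapse to the weights $(\beta_e)_{e\in E}$; the key is that the non-sticky balance $\sum_{e'}\beta_{e'}f'_{e'}(0)=0$ is exactly the compatibility condition that makes $h$ a genuine element of the (localized) domain, so the limit recovers $\beta_e$.

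For part \ref{item_node_sticky}, the plan is to use the mean-exit-time formula, which is the star-graph analogue of the second identity in \eqref{eq_scalespeed}. I would write $H_\delta(\bv)=\Esp_{\bv}[T_\delta]$ as a Green's-function integral against the speed measures $(\m_e)_{e\in E}$ over $B(\bv,\delta)$, plus a contribution coming from the sticky term $\rho\Lop_e f_e(0)$ in the gluing condition that accounts for the time held at $\bv$. Concretely, solving the Dirichlet/Poisson problem $\Lop u=-1$ on $B(\bv,\delta)$ with $u=0$ on the boundary sphere and the gluing condition at $\bv$ yields $u(\bv)=H_\delta(\bv)$; the sticky parameter $\rho$ enters additively through the gluing relation. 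Dividing by $\delta$, the genuine-diffusion (Green's function) part is $O(\delta)$ — it scales like $\delta$ times $m_e([0,\delta))\to 0$ — and vanishes in the limit, while the sticky part contributes exactly $\rho$, giving $\lim_{\delta\to0}\delta^{-1}H_\delta(\bv)=\rho$. Here I would lean on the Dirichlet-problem results promised in Appendix~\ref{app_Dirichlet}, which supply the explicit Green function on the unit disk of the star graph and identify the role of the stickiness parameter.

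The main obstacle I anticipate is the irregularity flagged in the introduction: the functions $h$ and $u$ I want to plug into Dynkin's formula need to lie in $\dom_{C_0}(\Lop)$ (or a suitable localized domain for the stopped process), yet for generalized operators $\frac12\D_{m_e}\D_{s_e}$ the natural candidates are only $\D_{s}$-differentiable, not $C^2$, so I cannot invoke ordinary Itô/Taylor expansions. I would therefore justify the optional-stopping step by verifying directly that these scale-affine and Green-type functions are in the domain (using that $\Lop h=0$ and $\Lop u=-1$ are piecewise constant, hence in $C_0$ after the localization), and by appealing to the Feller--Dynkin property to legitimize Dynkin's formula on the stopped process. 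The second delicate point is controlling the limit $\delta\to0$ of the scale ratios uniformly across all (possibly infinitely many) edges so that $\beta^\delta_e\to\beta_e$; I expect to handle this by dividing out $s'_{e'}(0+)$ and using the normalization $\sum_{e'}\beta_{e'}=1$ to keep the expressions bounded.
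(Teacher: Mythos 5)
Your proposal is correct, and it splits naturally into two halves with different relationships to the paper. For part \ref{item_node_sticky} you follow essentially the same route as the paper: identify $H_\delta$ with the solution of the Poisson problem $\Lop u=-1$ on $B(\bv,\delta)$ with zero boundary data and the gluing condition at $\bv$ (Proposition~\ref{prop_Dirichlet_solution}), then read off the limit from the explicit solution, where the Green-function part is $o(\delta)$ and the scale-affine sticky part contributes $\rho$ (Proposition~\ref{prop_sticky_estimate}); your remark that the diffusive part scales like $\delta\cdot\m_e((0,\delta))$ is exactly the paper's estimate. For part \ref{item_node_skew} you take a genuinely different path. The paper applies Dynkin's formula to an \emph{arbitrary} $f\in\dom_{C_0}(\Lop)$, divides by $\delta$, and uses part \ref{item_node_sticky} together with the Dynkin operator to obtain $\sum_e\beta_e f'_e(0)=\lim_\delta\sum_e\beta^\delta_e f'_e(0)$, after which a specially constructed test function isolates each $\beta^\delta_{\wh e}$. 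You instead solve the harmonic Dirichlet problem with indicator boundary data on $\partial B(\bv,\delta)$ by a scale-affine function $h$ satisfying the (non-sticky) gluing balance, which yields the exact closed form $\beta^\delta_e=\beta_e\,\s'_e(0)\s_e(\delta)^{-1}\big/\sum_{e'}\beta_{e'}\s'_{e'}(0)\s_{e'}(\delta)^{-1}\to\beta_e$. Your route is more explicit and has the advantage of making the Dynkin integral term vanish identically, so part \ref{item_node_skew} does not depend on part \ref{item_node_sticky} at all; it also recovers, as a by-product, the exact edge-exit distribution (consistent with Corollary~\ref{cor_edge_transition_NSE} in the NSE case). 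The price is that your limit leans on $\s_e(\delta)/\delta\to\s'_e(0+)\in(0,\infty)$, but the paper relies on the same implicit regularity of the scales at $0$ (it appears in Propositions~\ref{prop_change_of_scale} and~\ref{prop_Dirichlet_solution}), and both arguments require the same extension device to place the locally defined test function in $\dom_{C_0}(\Lop)$ before invoking Dynkin's formula, which you correctly flag and which the paper carries out explicitly in its proof of part \ref{item_node_sticky}.
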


\begin{proof}
	[Proof of~\ref{item_node_sticky}]
	We recall Dynkin's formula (see~\cite[Lemma~17.21]{Kal}),
	\begin{equation}
		\label{eq_proof_Dynkin}
		\begin{aligned}
			\Esp_{\bx} \braces{ f(\mX_{T^{}_{\delta}}) - f(\bx) }
			&=
			\Esp_{\bx} \sqbraces{ \int_{0}^{T^{}_{\delta}} \Lop f(\mX_s) \vd s },
			& \forall\, f\in & \dom_{C_0}(\Lop).
		\end{aligned}
	\end{equation}
	Let $B(\bv;\delta) $ be the open ball of $\Gamma$ of radii $\delta>0 $,
	defined as $B(\bv;\delta) := \{ (e,x)\in \Gamma:\; x < \delta,\, e\in E\}$. 
	One can always find $f $ such that 
	$f \in \dom_{C_0}(\Lop) $ such that $\frac 12 \D_{\m_e}\D_{\s_e} f = -1 $
	on $B(0;\delta) $ and $f=0 $ on $B^{c}(0;2\delta) $. 
	The fact that there exists such function $g $ defined locally on the closure of $B(0,\delta) $ is part of Proposition~\ref{prop_Dirichlet_solution}. 
	It thus remains to show that such function $g$ can be extended to a function $f$ of $\dom_{C_0}(\Lop)  $. 
	
	Let $f$ be the function defined as
	\begin{equation}
		f(e,x) := 
		\begin{cases}
			g(e,x), &x<\delta,\\
			h(e,x), &x\in [\delta,2\delta),\\
			0, &x\ge 2\delta,
		\end{cases}
	\end{equation}
	with
	\begin{equation}
		h(e,x) := \int_{\delta }^{x } \int_{\delta }^{y } \frac{1}{\delta} \braces{\zeta - 2\delta}
		\, \m_{e}(\rd \zeta)\, \s_e(\rd y)
		+ A_e \int_{\delta }^{x } \s_e(\rd y)
		+ g(e,\delta).
	\end{equation}
	The constants $(A_e;\, e\in E) $ are chosen such that
	$h(e, 2\delta) = 0 $, for all $e\in E $.
	We also observe that
	\begin{equation}
		h(e,\delta)=g(e,\delta),
		\quad \Lop_e h(e,\delta+) = \Lop_e g(e,\delta-) = -1, 
		\quad \text{and} \quad \Lop_e h(e, 2\delta-) = 0,
	\end{equation}
	for all $e\in E $.
	Therefore, $f\in \dom_{C_0}(\Lop)$.
	
	For such $f$, from~\eqref{eq_proof_Dynkin}, we have that $f(\bx) = H_{\delta}(\bx) = \Esp_{\bx} \sqbraces{T^{}_{\delta}} $, for all
	$\bx\in B(\bv;\delta) $. 
	Also $f $ satisfies the obvious boundary conditions
	$f(\bx) = 0 $, for all $\bx \in \partial B(0;\delta) $.
	The problem hence reads
	\begin{equation}
		\label{eq_proof_stoppingtime_Dirichlet}
		\begin{cases}
			\frac{1}{2}\D_{\m_{e}} \D_{\s_e} u(e,x) = -1, &\forall\, x < 1,\quad \forall\, e\in E,\\
			\rho \Lop_e u(e,0) = \sum_{e'\in E} \beta_{e'} u_{e}'(0),
			&\forall\, e\in E,\\
			u(e,0)=u(e',0), &\forall\, e,e'\in E,\\
			u(e,\delta) = 0, &\forall\, e\in E. 
		\end{cases}
	\end{equation}
	
	From Proposition~\ref{prop_Dirichlet_solution}, the function $H_{\delta} $ is the unique solution to~\eqref{eq_proof_stoppingtime_Dirichlet}.
	From Proposition~\ref{prop_sticky_estimate}, the limit holds.
	This completes the proof.
\end{proof}

\begin{proof}
	[Proof of~\ref{item_node_skew}]
	Let $f\in \dom_{C_0}(\Lop) $.
	We have that
	\begin{equation}
		\label{eq_proof_biasparameters}
		\Esp_{\bv} \braces{ f(\mX_{T^{}_{\delta}}) - f(\bv) }
		= \sum_{e\in E} \beta^{\delta}_e \braces{f_e(\delta) - f_e(0)}.
	\end{equation}
	From Dynkin's formula (see~\cite[Lemma~17.21]{Kal}), if $f\in \dom_{C_0}(\Lop) $,
	\begin{equation}
		\label{eq_proof_vertex_Dynkin}
		\Esp_{\bv} \braces{ f(\mX_{T^{}_{\delta}}) - f(\bv) }
		=
		\Esp_{\bv} \sqbraces{ \int_{0}^{T^{}_{\delta}} \Lop f(\mX_s) \vd s }. 
	\end{equation}
	Regarding the right hand term of~\eqref{eq_proof_vertex_Dynkin}, 
	from~\ref{item_node_sticky}, the Dynkin operator
	(see~\cite[Theorem~17.23]{Kal}), and the definition of $\dom_{C_0}(\Lop) $,
	we have that
	\begin{align}
		\lim_{\delta \to 0} \frac{1}{\delta} \Esp_{\bv} \sqbraces{ \int_{0}^{T^{}_{\delta}} \Lop f(\mX_s) \vd s }
		&= \lim_{\delta \to 0} \braces{ \frac{\Esp_{\bv} \sqbraces{T^{}_{\delta}}}{\delta}
			\frac{1}{\Esp_{\bv} \sqbraces{T^{}_{\delta}}}
			\Esp \sqbraces{ \int_{0}^{T^{}_{\delta}} \Lop f(\mX_s) \vd s }}
		\\
		&= \rho \Lop f(0) = \sum_{e\in E} \beta_e f'_e(0).
	\end{align} 
	Regarding the left hand term of~\eqref{eq_proof_vertex_Dynkin}, 
	\begin{align}
		\lim_{\delta \to 0} \frac{1}{\delta} 	\Esp_{\bv} \braces{ f(\mX_{T^{}_{\delta}}) - f(\bv) }
		&= \lim_{\delta \to 0} \frac{1}{\delta} \sum_{e \in E} \Prob_{\bv} \braces{I(T^{}_{\delta})=e} \braces{f_{e}(\delta)-f_e(0)}.
	\end{align}
	Combining both relations yields that
	\begin{equation}
		\label{eq_proof_node_sumrelation}
		\sum_{e\in E} \beta_e f'_e(0)
		=  \lim_{\delta \to 0} \sum_{e \in E} \beta^{\delta}_e f'_e(0).
	\end{equation}
	
	Since the function \( f \) above is arbitrary in \(\dom_{C_0}(\Lop)\), we may construct, by extension (as in the proof of~\ref{item_node_sticky}), a function \( f \) for some \(\varepsilon > 0\) and $\wh e\in E $ such that
	\begin{equation}
		f(e,x)
		= 
		\begin{dcases}
			2 \int_{0}^{x} \m_e((0,y])\, \s_e(\rd y) +  \frac{\rho}{\beta_{\wh e}} x, 
			& \forall\, x \in [0,\varepsilon),\quad \forall\,  e = \wh e,
			\\
			2 \int_{0}^{x} \m_e((0,y])\, \s_e(\rd y), 
			& \forall\, x \in [0,\varepsilon),\quad  \forall\, e \neq \wh e. 
		\end{dcases}
	\end{equation}
	For such $f $, from~\eqref{eq_proof_node_sumrelation}, we get that~\ref{item_node_skew} holds for $e=\wh e $.
	This completes the proof. 
\end{proof}

\begin{corollary}
	\label{cor_edge_transition_NSE}
	If $\mX $ is NSE (or $\s_e = \s_{e'}$ for all $e,e'\in E $), then 
	\begin{equation}
		\begin{aligned}
			\beta^{b}_{e} &= \beta_e,
			& \forall\, &e\in E,
			& \forall\, &0<b<l_{*}.
		\end{aligned}
	\end{equation}
\end{corollary}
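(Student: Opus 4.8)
The plan is to compute $\beta^b_{\wh e} = \Prob_{\bv}\braces{I(T_b) = \wh e}$ exactly, for a fixed target edge $\wh e \in E$, by exhibiting a function $h$ that is $\Lop$-harmonic on $B(\bv;b)$, satisfies the gluing condition~\eqref{eq_def_lateral} at $\bv$, and equals the indicator $\indicB{e = \wh e}$ at the boundary points $(e,b)$, $e \in E$. Since then $h(\mX_{T_b}) = \indicB{I(T_b) = \wh e}$, Dynkin's formula will give $\beta^b_{\wh e} = \Esp_{\bv}\braces{h(\mX_{T_b})} = h(\bv)$, and the hypothesis $\s_e \equiv \s_{e'}$ (in particular NSE, where $\s_e(x) = x$) will force $h(\bv)$ to equal $\beta_{\wh e}$ independently of $b$.

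Concretely, write $\s$ for the common scale and set $h_e(x) := a + c_e\,\s(x)$ on each edge $e$, for constants $a$ and $(c_e)_{e\in E}$ to be determined. Such an $h$ is continuous at $\bv$ (all edges share the value $a$), and since $\D_{\s_e} h_e \equiv c_e$ we have $\Lop_e h_e = \frac12\D_{m_e}\D_{\s_e} h_e = 0$; in particular $h$ is $\Lop$-harmonic on $B(\bv;b)\setminus\{\bv\}$ and $\Lop_e h_e(0)=0$. The gluing condition~\eqref{eq_def_lateral} therefore reduces to $\sum_{e'\in E}\beta_{e'} c_{e'} = \rho\,\Lop_e h_e(0)= 0$, regardless of the value of $\rho$; this already records the fact that stickiness does not bias the choice of edge. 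Imposing $h_e(b) = \indicB{e = \wh e}$ yields $c_{\wh e} = (1-a)/\s(b)$ and $c_e = -a/\s(b)$ for $e\neq\wh e$. Substituting into $\sum_{e'}\beta_{e'}c_{e'}=0$ and using $\sum_{e'}\beta_{e'}=1$ gives, after multiplying by $\s(b)$, the relation $\beta_{\wh e}(1-a) - a(1-\beta_{\wh e}) = 0$, that is $a = \beta_{\wh e}$. Crucially, because the scale is common across edges the level $\s(b)$ cancels, so $a=h(\bv)=\beta_{\wh e}$ does not depend on $b$; this is exactly the point at which, for distinct scales, the argument of part~\ref{item_node_skew} of Proposition~\ref{prop_node_infinitesimal} would only survive in the limit $b\to 0$.

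It remains to justify the passage via Dynkin's formula~\eqref{eq_proof_Dynkin}. As in the proof of part~\ref{item_node_sticky} of Proposition~\ref{prop_node_infinitesimal}, I would extend $h$ off $\overline{B(\bv;b)}$ to a function in $\dom_{C_0}(\Lop)$ without altering its values on $\overline{B(\bv;b)}$; this is possible since $h$ already meets the interior, continuity, and gluing requirements and only needs to be continued to a $C_0$ function with $\Lop$-image in $C_0$ beyond radius $b$. Because $\Lop h = 0$ on $B(\bv;b)$, the integrand in~\eqref{eq_proof_Dynkin} vanishes up to $T_b$, whence $\Esp_{\bv}\braces{h(\mX_{T_b})} = h(\bv) = \beta_{\wh e}$; here $T_b<\infty$ $\Prob_{\bv}$-a.s. since $H_b(\bv)=\Esp_{\bv}\sqbraces{T_b}<\infty$ for $b<l_*$ (the solution of the associated Dirichlet problem is finite). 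Finally $h(\mX_{T_b}) = \sum_{e\in E}\indicB{I(T_b)=e}\,h_e(b) = \indicB{I(T_b)=\wh e}$, so $\beta^b_{\wh e} = \Esp_{\bv}\braces{\indicB{I(T_b)=\wh e}} = \beta_{\wh e}$. As $\wh e\in E$ and $b\in(0,l_*)$ were arbitrary, the claim follows. The only genuine technical point is the extension step ensuring Dynkin's formula applies on $\overline{B(\bv;b)}$; the remainder is the elementary linear solve above, whose key feature is the cancellation of $\s(b)$ under a common scale.
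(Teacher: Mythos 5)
Your proof is correct, but it follows a genuinely different route from the paper's. The paper argues by a renewal decomposition: it conditions on the edge occupied at the first passage to level $\delta<b$, uses the natural-scale exit probability $\Prob_{(e,\delta)}(T_b<T_0)=\delta/b$ from~\eqref{eq_scalespeed} together with the strong Markov property at the return to $\bv$, cancels terms to obtain $\beta^{b}_{e}=\beta^{\delta}_{e}$ for all $0<\delta<b$, and then lets $\delta\to 0$ invoking Proposition~\ref{prop_node_infinitesimal}\ref{item_node_skew}. You instead solve the exit problem exactly: you exhibit the $\Lop$-harmonic function $h_e(x)=a+c_e\s(x)$ with boundary data $\indicB{e=\wh e}$ satisfying the gluing condition, and read off $\beta^{b}_{\wh e}=h(\bv)=\beta_{\wh e}$ from Dynkin's formula, the key observation being that the common scale makes $\s(b)$ cancel in the linear solve. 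Your approach is self-contained (it does not consume the limit statement of Proposition~\ref{prop_node_infinitesimal}\ref{item_node_skew}, and in fact re-derives its content at every finite scale $b<l_*$), and it makes transparent why stickiness plays no role, since $\Lop_e h_e(0)=0$ kills the $\rho$-term. The price is the technical debt you correctly flag: extending $h$ off $\overline{B(\bv;b)}$ to an element of $\dom_{C_0}(\Lop)$ so that Dynkin's formula applies, which is exactly the device the paper uses in the proof of Proposition~\ref{prop_node_infinitesimal}\ref{item_node_sticky} and is unproblematic for $b<l_*$; the paper's argument avoids this entirely by working only with hitting probabilities. Two small points to tighten: the gluing condition involves $h'_{e'}(0)=c_{e'}\s'(0)$, so in the non-NSE common-scale variant you should note that the factor $\s'(0)$ is common to all edges and cancels (under NSE it is simply $1$); and you should state explicitly that $\s(b)>\s(0)=0$ by strict monotonicity of the scale, so the division defining $c_e$ is legitimate.
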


\begin{proof}
	For all  $\delta \in (0,b) $, from Bayes' rule, 
	\begin{align}
		\Prob_{\bv} \braces{ J(T_b) = e }
		={} & \sum_{e'\in E} \Prob_{\bv} \braces{ J(T_b) = e ;\; J(T_\delta) = e'}
		\\ ={}& \sum_{e'\in E} \Prob_{\bv} \braces{ J(T_b) = e |\; J(T_\delta) = e'}
		\Prob_{\bv } \braces{J(T_\delta) = e'}
		\\ ={}& \sum_{e'\in E} \Prob_{(e',\delta)} \braces{ J(T_b) = e }
		\Prob_{\bv } \braces{J(T_\delta) = e'}
		\\ ={}& \Prob_{(e,\delta)} \braces{ J(T_b) = e ;\, T_0 > T_b }
		\Prob_{\bv } \braces{J(T_\delta) = e'}
		\\ &+ \sum_{e'\in E} \Prob_{(e',\delta)} \braces{ J(T_b) = e  ;\, T_0 < T_b}
		\Prob_{\bv } \braces{J(T_\delta) = e'}
		\\ ={}& \frac{\delta}{b} \Prob_{\bv } \braces{J(T_\delta) = e}
		+ \sum_{e'\in E} \frac{b-\delta}{b} \Prob_{\bv} \braces{ J(T_b) = e } \Prob_{\bv } \braces{J(T_\delta) = e'}
		\\ ={}& \frac{\delta}{b} \Prob_{\bv } \braces{J(T_\delta) = e}
		+ \frac{b-\delta}{b} \Prob_{\bv} \braces{ J(T_b) = e } 
	\end{align}
	Hence, 
	\begin{equation}
		\begin{aligned}
			\Prob_{\bv} \braces{ J(T_b) = e } &= \Prob_{\bv } \braces{J(T_\delta) = e},
			& \forall\, 0&< \delta < b.
		\end{aligned}
	\end{equation}
	Taking the limit as $\delta \to 0 $ completes the proof. 
\end{proof}

\subsection{Construction of (sticky) Walsh Brownian motion}

\begin{proposition}
	\label{prop_construction_Walsh}
	Let $\mW^{*} := (J,R^{*}) $ be a diffusion on $\Gamma^{\ext} $ such that
	\begin{enumerate}
		\item $R^{*} $ is a sticky-reflected Brownian motion of stickiness parameter $\rho\ge 0 $,
		\item for every edge $e\in E $, the following limit holds:
		\begin{equation}
			\begin{aligned}
				\lim_{\delta \to 0} \Prob_{\bv} \braces{J(T^{\mW^{*}}_{\delta}) = e} &= \beta_e. 
			\end{aligned}
		\end{equation}
	\end{enumerate}
	Then, $\mW^{*} $ is a sticky Walsh Brownian motion on $\Gamma^{\ext} $ of stickiness parameter $\rho $ and bias parameters $(\beta_e;\, e\in E) $. 		
\end{proposition}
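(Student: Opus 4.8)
The plan is to identify the data~\ref{item_A1}--\ref{item_A2} that determine $\mW^*$ as a general diffusion, to show these coincide with the data defining the sticky Walsh Brownian motion, and then to conclude by the uniqueness of a diffusion specified by~\ref{item_A1}--\ref{item_A2} (\cite[Theorem~3.1]{freidlin1993diffusion}). Since $R^*$ is a sticky-reflected Brownian motion on $[0,\infty)$, the boundaries of $\mW^*$ at infinity are natural, so Assumption~\ref{assumption_regularity} holds and no boundary data beyond~\ref{item_A1}--\ref{item_A2} is required.

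First I would pin down the local behaviour on each edge, i.e.\ the family~\ref{item_A1}. Fix $e\in E$ and $0<a<x<b$. The quantities $\Prob_{\bx}(T_{\bb}<T_{\ba})$ and $\Esp_{\bx}(T_{\bb}\wedge T_{\ba})$ appearing in~\eqref{eq_scalespeed} depend only on the behaviour of the distance-to-origin process $X=d(\mW^*,\bv)$ inside $(a,b)\subset(0,\infty)$, where $X=R^*$ is a standard Brownian motion. Matching~\eqref{eq_scalespeed} against the Brownian values forces $\s_e(x)=x$ and $\m_e(\rd x)=\rd x$ for every $e\in E$. In particular $\mW^*$ is NSE and $\Lop_e=\frac12\D_x\D_x$, which is exactly~\eqref{eq_generator_W}--\eqref{eq_domain_W}.

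Next I would read off the gluing parameters~\ref{item_A2} from Proposition~\ref{prop_node_infinitesimal}, which applies because the processes $\mW^*$ stopped at $T_\delta$ are Feller--Dynkin for small $\delta$. The bias parameters are immediate: Proposition~\ref{prop_node_infinitesimal}\ref{item_node_skew} together with hypothesis~2 gives $\beta_e=\lim_{\delta\to0}\beta_e^\delta=\lim_{\delta\to0}\Prob_{\bv}(J(T_\delta^{\mW^*})=e)$, so the coefficients in~\eqref{eq_def_lateral} are the prescribed $(\beta_e;\,e\in E)$. For the stickiness coefficient, Proposition~\ref{prop_node_infinitesimal}\ref{item_node_sticky} identifies it as $\lim_{\delta\to0}\delta^{-1}H_\delta(\bv)$. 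Here $H_\delta(\bv)=\Esp_{\bv}(T_\delta)$ is the mean time for $R^*$, started at $0$, to leave $[0,\delta)$, and the defining property of a sticky-reflected Brownian motion of stickiness parameter $\rho$ yields $\lim_{\delta\to0}\delta^{-1}H_\delta(\bv)=\rho$. Hence the gluing condition of $\mW^*$ is exactly~\eqref{eq_lateral_W}.

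The step I expect to be the main obstacle is the normalisation in the preceding paragraph: one must verify that the stickiness constant of $R^*$ as a one-dimensional sticky-reflected Brownian motion is the same $\rho$ that multiplies $\Lop_e f_e(0)$ in the star-graph gluing condition~\eqref{eq_lateral_W}, with no spurious factor. The cleanest way to settle this is to test~\eqref{eq_def_lateral} against radial functions $f=\bigoplus_{e\in E} g$ with a common $g\in\dom_{C_0}(\Lop_{e})$: using $\sum_{e\in E}\beta_e=1$ and $\Lop_e f_e=\frac12 g''$, the gluing condition collapses to $g'(0)=\tfrac{\rho}{2}\,g''(0)$, which is precisely the boundary condition characterising the modulus $R^*$ as a sticky-reflected Brownian motion of parameter $\rho$. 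This pins down the constant. Having matched both~\ref{item_A1} and~\ref{item_A2}, the uniqueness of the diffusion determined by these data identifies $\mW^*$ with the sticky Walsh Brownian motion of stickiness $\rho$ and bias $(\beta_e;\,e\in E)$, completing the proof.
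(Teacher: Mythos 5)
Your proposal is correct in outline and reaches the conclusion by the same underlying principle as the paper (a regular diffusion on $\Gamma^{\ext}$ is determined by the data \ref{item_A1}--\ref{item_A2}, so it suffices to compute that data), but the mechanics differ. The paper does not invoke Proposition~\ref{prop_node_infinitesimal} at all: it identifies the scale and speed directly, represents the sticky reflected Brownian motion as the modulus of a two-sided sticky Brownian motion, computes $\Esp_{\bv}[T^{\mW^{*}}_{\delta}]$ explicitly via the Green formula~\eqref{eq_scalespeed} applied to the speed measure $\rd y + \rho\,\delta_0(\rd y)$, and then derives the gluing condition by passing to the limit in Dynkin's formula. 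You instead read both gluing parameters off Proposition~\ref{prop_node_infinitesimal}, which is a legitimate and arguably cleaner reduction (that proposition precedes this one and applies since $\mW^{*}$ is a regular diffusion, hence of the form \ref{item_A1}--\ref{item_A2} by Freidlin--Wentzell). What your route buys is the elimination of the Dynkin-formula step; what it costs is that the single genuine computation of the paper's proof --- showing $\lim_{\delta\to 0}\delta^{-1}\Esp_{0}[T^{R^{*}}_{\delta}]=\rho$ for the sticky reflected Brownian motion --- is asserted as "the defining property" rather than carried out. Since the paper nowhere gives a formal definition of the stickiness parameter of $R^{*}$ (its proof implicitly takes it to mean a speed-measure atom of mass $\rho$ at the origin and then evaluates the Green function there), this is exactly the point where the normalisation you flag must be settled, and your radial-function consistency check addresses the gluing-condition side of the matching but not the $\Esp_{0}[T_{\delta}]$ side. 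To make the argument complete you should perform that one Green-formula computation (or state precisely which convention for the sticky reflected Brownian motion you adopt and derive the exit-time asymptotic from it); everything else in your proposal is sound.
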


\begin{proof}
	Since $R^{*}$ is a sticky reflected Brownian motion, 
	\begin{equation}
		\begin{aligned}
			\m_{e}(\rd x) &= \vd x + \rho\, \delta_0(\rd x),
			& \s_e(x) &= x,
			& \forall\,& x>0,
			& \forall\,& e\in E. 
		\end{aligned}
	\end{equation}
	From Proposition~\ref{prop_FD}, $\mW $ is Feller--Dynkin.
	We observe that $\Lop_e = \frac 12 \D_{\m_e} \D_{\s_e} = f''(x) $, for all $x> 0 $ and $f\in \dom_{C_0}(\Lop_e) $, where 
	\begin{equation}
		\dom_{C_0}(\Lop_e) \subset C^{2}_0([0,\infty)).
	\end{equation} 
	The law of the reflecting Brownian motion is the same as the absolute value of the (two-sided) sticky Brownian motion (see~\cite{EngPes}).
	Hence, from the Green formula (see~\cite[Corollary~VII.3.8]{RevYor}) on the sticky Brownian motion,  
	\begin{align}
		\frac{1}{\delta} \Esp_{\bv}\sqbraces{T^{\mW^{*}}_{\delta}}
		&= \frac{1}{\delta} \Esp_{0}\sqbraces{T^{R^{*}}_{\delta}}
		= \frac{1}{\delta} \Esp_{0}\sqbraces{T^{B^{*}}_{\delta}}
		\\ &= 
		\frac{1}{\delta}\int_{-\delta}^{\delta} \frac{(0\wedge y + \delta)(\delta - 0 \vee y)}{2\delta}\, \braces{ \rd y + \rho \delta_{0}(\rd y) }
		= \frac{\rho \delta}{2}
		+ \mc O(\delta)
	\end{align}
	From Dynkin's formula~\cite[Lemma~17.21]{Kal},
	\begin{align}
		\frac{1}{\delta} & \braces{\Esp_{\bv}  \sqbraces{ f(\mW^{*}_{T^{\mW}_{\delta}})} - f(\bv)}
		\\ &=
		\frac{\Esp_{\bv}\sqbraces{T^{\mW^{*}}_{\delta}}}{\delta} 
		\braces{ \Lop f(\bv) + \frac{1}{\Esp_{\bv}\sqbraces{T^{\mW^{*}}_{\delta}}}
			\Esp_{\bv} 
			\sqbraces{ \int_{0}^{T^{\mW^{*}}_{\delta}} 
				\Lop f(\mW^{*}_{T^{\mW^{*}}_{\delta}}) - \Lop f(\bv) \vd s}}.
	\end{align}
	Taking to the limit above yields,
	\begin{equation}
		\lim_{\delta \to 0} 
		\frac{1}{\delta} \braces{\Esp_{\bv} \sqbraces{ f(\mW^{*}_{T^{\mW^{*}}_{\delta}})} - f(\bv)}
		= \frac{\rho}{2} \Lop f(\bv).
	\end{equation}
	On the other hand, 
	\begin{align}
		\lim_{\delta \to 0} \frac{1}{\delta} & \braces{\Esp_{\bv}  \sqbraces{f(\mW^{*}_{T^{\mW^{*}}_{\delta}})} - f(\bv)}
		\\ & = \lim_{\delta \to 0} \frac{1}{\delta} \braces{ \sum_{e \in E} \Prob_{\bv} \braces{J(T^{\mW^{*}}_{\delta}) = e} 
			\braces{f_{e}(\delta) - f_{e}(0)}}
		= \sum_{e\in E} \beta_e f'_{e}(0)
	\end{align}
	Combining the last two relations yields the gluing condition~\eqref{eq_def_lateral}. 
	By definition (see Section~\ref{ssec_walsh}), the process $\mW^{*} $ is a sticky Walsh Brownian motion
	on $\Gamma^{\ext} $.
\end{proof}

We observe that the Walsh Brownian motion is a NSE diffusion.
Hence, from Corollary~\ref{cor_edge_transition_NSE} the construction can be simplified as follows.

\begin{corollary}
	Let $\mW^{*} := (J,R^{*}) $ be a diffusion on $\Gamma^{\ext} $ such that:
	\begin{enumerate}
		\item $R^{*} $ is a sticky-reflected Brownian motion of stickiness parameter $\rho \ge 0 $,
		\item for some $b>0 $ and some family $(\beta_e;\, e\in E) $ that satisfies the contraint in~\ref{item_A2} (strict positivity and sum to 1):
		\begin{equation}
			\begin{aligned}
				\Prob_{\bv} \braces{J(T^{\mW^{*}}_{b}) = e} &= \beta_e,
				& \forall\, e&\in E. 
			\end{aligned}
		\end{equation}
	\end{enumerate}
	Then, $\mW^{*} $ is a sticky Walsh Brownian motion on $\Gamma^{\ext} $ of stickiness parameter $\rho $ and bias parameters $(\beta_e;\, e\in E) $. 		
\end{corollary}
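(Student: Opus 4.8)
The plan is to use the level-independence of edge-selection probabilities enjoyed by NSE diffusions (Corollary~\ref{cor_edge_transition_NSE}) to upgrade the single-level hypothesis~(2) to the vanishing-level hypothesis of Proposition~\ref{prop_construction_Walsh}, and then to invoke that proposition directly.

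First I would record that $\mW^{*} = (J, R^{*})$ is a regular NSE diffusion on $\Gamma^{\ext}$. The NSE property is immediate: since $R^{*}$ is a sticky-reflected Brownian motion, on every edge $e \in E$ the distance-to-origin process is a reflected Brownian motion, so that $s_e(x) = x$ for all $x > 0$ and $e \in E$. Regularity follows because $R^{*}$ returns to the origin almost surely from any interior point, while by hypothesis~(2) the process selects each edge from $\bv$ with the strictly positive probability $\beta_e$, after which the reflected motion on that edge reaches every level.

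Next I would apply Corollary~\ref{cor_edge_transition_NSE} to $\mW^{*}$. Since $\mW^{*}$ is NSE, the corollary (equivalently, the Bayes-rule decomposition in its proof) shows that the edge-selection probability does not depend on the level at which it is measured, i.e.
\[
\Prob_{\bv}\braces{J(T^{\mW^{*}}_{b}) = e} = \Prob_{\bv}\braces{J(T^{\mW^{*}}_{\delta}) = e}, \qquad \forall\, 0 < \delta < b < l_{*}.
\]
Letting $\delta \to 0$, the limit exists and equals this common value, which by hypothesis~(2) is $\beta_e$; hence
\[
\lim_{\delta \to 0} \Prob_{\bv}\braces{J(T^{\mW^{*}}_{\delta}) = e} = \beta_e, \qquad \forall\, e \in E.
\]
This is precisely hypothesis~(2) of Proposition~\ref{prop_construction_Walsh}, while hypothesis~(1) holds by assumption. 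That proposition then identifies $\mW^{*}$ as a sticky Walsh Brownian motion on $\Gamma^{\ext}$ of stickiness parameter $\rho$ and bias parameters $(\beta_e;\, e \in E)$, which is the claim.

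Since both cited results do the heavy lifting, the deduction is short. The only point requiring genuine care is the verification that $\mW^{*}$ qualifies as a regular NSE diffusion, so that Corollary~\ref{cor_edge_transition_NSE} legitimately applies; here I would check that the strong Markov property built into the definition of a diffusion, together with the natural-scale structure of $R^{*}$, suffices for the decomposition underlying that corollary to go through.
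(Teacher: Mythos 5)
Your proposal is correct and follows essentially the same route as the paper: the paper's own justification is precisely the remark that $\mW^{*}$ is NSE, so Corollary~\ref{cor_edge_transition_NSE} (via its Bayes-rule level-independence argument) upgrades the single-level hypothesis to the $\delta \to 0$ limit required by Proposition~\ref{prop_construction_Walsh}. Your additional verification that $\mW^{*}$ is a regular NSE diffusion is a reasonable extra precaution that the paper leaves implicit.
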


\subsection{Change of scale}

We now show how diffusions on $ \Gamma$ behave under a change of scale. For one-dimensional diffusions, this is done by~\cite[Exercise~3.18]{RevYor}. Our motivation is that our time-change characterizations are initially established for NSE diffusions. This requirement is analogous to the standard Dambis--Dubins--Schwarz theorem, where the process must be a local martingale, and to the time-change characterization by It\^o and McKean, where the process must be on natural scale.

The following result provides a way to extend characterizations from NSE diffusions to non-NSE diffusions on $ \Gamma$.

\begin{proposition}
	\label{prop_change_of_scale}
	Let $\mX = (I,X) $ be a regular diffusion on $\Gamma $, defined through~\ref{item_A1}--\ref{item_A2}, that satisfies Assumption~\ref{assumption_regularity}.
	The process $ \mY := s(\mX) = (I(t),\s_{I(t)}(X_t);\;t\ge 0) $ is a NSE regular diffusion on $\Gamma $, of state-space $\Gamma' := \s(\Gamma) $, of speed measures the push-forward measures 
	$ \m^{\mY}_{e} = \m_{e}\circ \q_e$, $e\in E$ and with gluing condition
	\begin{equation}
		\begin{aligned}
			\frac {\rho^{\mY}}{2} \D_{\m^{\mY}_{e}} \D_x f_{e}(0)
			&= \sum_{e' \in E} \beta^{\mY}_{e'} f'_{e}(0), 
			& \text{for all } e&\in E,
		\end{aligned}
	\end{equation}
	where  
	\begin{equation}
		\begin{aligned}
			\rho^{\mY} &= \braces{\sum_{e'\in E} \beta_{e'} \s'_{e}(0) }^{-1}  \rho,
			& \text{and}&
			& \beta^{\mY}_{e} &=  \braces{\sum_{e'\in E} \beta_{e'} \s'_{e}(0) }^{-1} \braces{ \beta_{e'} \q'_{e}(0) }.
		\end{aligned}
	\end{equation}
\end{proposition}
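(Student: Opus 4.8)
The plan is to use that the scale map $s:\Gamma\to\Gamma':=s(\Gamma)$, $(e,x)\mapsto(e,s_e(x))$, is a homeomorphism that fixes $\bv$ (since $s_e(0)=0$ under Assumption~\ref{assumption_regularity}) and is strictly increasing on each edge, with inverse $q$. First I would observe that, $s$ being a deterministic bimeasurable bijection, the image $\mY=s(\mX)$ is continuous, inherits the strong Markov property on the \emph{unchanged} filtration $\process{\bF_t}$, and is regular because $T^{\mY}_{\by}=T^{\mX}_{q(\by)}$ matches hitting times bijectively and preserves their finiteness probabilities. Thus $\mY$ is a regular diffusion on $\Gamma'$, and only its scale, speed and gluing data remain to be identified.

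For the on-edge data I would invoke~\eqref{eq_scalespeed}. Since $s$ does not alter time, for $a<x<b$ in $O_e$ with primed images under $s_e$ one has $\Prob_{\bx}(T_{\bb}<T_{\ba})=\Prob_{s(\bx)}(T_{s(\bb)}<T_{s(\ba)})$ and likewise for the mean exit times. Writing the left identity of~\eqref{eq_scalespeed} for both $\mX$ and $\mY$ and comparing forces the scale of $\mY$ to be the identity on each edge, so $\mY$ is NSE. For the speed I would substitute the Green kernel $G^{(e)}_{a,b}$ into the second identity of~\eqref{eq_scalespeed} and change variables $w=s_e(z)$; monotonicity of $s_e$ turns $G^{(e)}_{a,b}(x,z)$ into the natural-scale Green kernel in the variables $s_e(\cdot)$, and matching the two representations identifies the speed of $\mY$ as the push-forward $\m^{\mY}_e=\m_e\circ\q_e$. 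This is exactly the classical one-dimensional change-of-scale computation.

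The transformation of the gluing condition is the heart of the argument and the step I expect to be the main obstacle. The clean route is via the generator intertwining: $g\in\dom_{C_0}(\Lop^{\mY})$ if and only if $f:=g\circ s\in\dom_{C_0}(\Lop)$, and then $\Lop^{\mY}g(\bv)=\Lop f(\bv)$. Because $s_e(0)=0$, the ordinary one-sided derivatives transform as $f_e'(0)=s_e'(0)\,g_e'(0)$, where $s_e'(0):=\lim_{x\to0}s_e(x)/x$. Substituting these two identities into the $\mX$-gluing condition~\eqref{eq_def_lateral} turns it into $\sum_{e'}\beta_{e'}s_{e'}'(0)\,g_{e'}'(0)=\rho\,\Lop^{\mY}_e g_e(0)$; renormalizing the edge-coefficients to sum to one then yields the $\mY$-gluing condition with stickiness and bias obtained by reweighting by the scale slopes at $\bv$ and dividing by $Z:=\sum_{e'}\beta_{e'}s_{e'}'(0)$, i.e.\ $\rho^{\mY}=\rho/Z$ and $\beta^{\mY}_e=\beta_e s_e'(0)/Z$.

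The subtle points to treat carefully are the existence and strict positivity of $s_e'(0)$---which the form of~\eqref{eq_def_lateral} already presupposes---and the identification $\dom_{C_0}(\Lop^{\mY})=\{g:\,g\circ s\in\dom_{C_0}(\Lop)\}$, so that the intertwining genuinely describes $\mY$. To avoid relying on pointwise derivatives in the generalized-operator setting, I would alternatively establish the same two constants probabilistically through Proposition~\ref{prop_node_infinitesimal} applied to $\mY$: part~\ref{item_node_sticky} gives $\rho^{\mY}=\lim_{\delta\to0}\delta^{-1}\Esp_{\bv}[T^{\mY}_\delta]$, computed by expressing $T^{\mY}_\delta$ as the exit time of $\mX$ from the asymmetric neighbourhood $\{(e',x):x<\q_{e'}(\delta)\}$ and evaluating it via $\m_e$ and $\rho$; and part~\ref{item_node_skew} gives $\beta^{\mY}_e=\lim_{\delta\to0}\Prob_{\bv}(I(T^{\mY}_\delta)=e)$, obtained as the value at $\bv$ of the harmonic function solving the Dirichlet problem on that neighbourhood (Proposition~\ref{prop_Dirichlet_solution}), with the $\mX$-gluing condition enforced through the linearization $s_{e'}(x)\sim s_{e'}'(0)x$. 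Both routes return the same reweighting by the scale slopes, confirming the stated parameters.
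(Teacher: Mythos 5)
Your proposal is correct and follows essentially the same route as the paper: establish that $\mY=s(\mX)$ is a regular diffusion by measurability and continuity of $s$, identify the scale and speed on each edge from the two identities in~\eqref{eq_scalespeed} via a change of variables, and then transform the gluing condition through the domain intertwining $g\in\dom_{C_0}(\Lop^{\mY})\Leftrightarrow g\circ s\in\dom_{C_0}(\Lop)$ together with $f_e'(0)=s_e'(0)g_e'(0)$, followed by renormalization of the edge weights. Your derived parameters $\rho^{\mY}=\rho/Z$ and $\beta^{\mY}_e=\beta_e s_e'(0)/Z$ agree with what the paper's proof actually produces (the statement's displayed formula with $\q_e'(0)$ appears to contain a typo), and your alternative probabilistic identification via Proposition~\ref{prop_node_infinitesimal} is a sound, if unnecessary, cross-check.
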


\begin{proof}
	We observe that the process $\mY_t = s(\mX_t) $ is a measurable function of
	$\mX_t $ and hence, satisfies the strong Markov property. 
	Due to the continuity of $s $, the process $\mY $ also has almost surely continuous sample paths.
	Consequently, it is a regular diffusion on $\Gamma' $. It remains only to determine its scale $( \s^{\mY}_e;\, e\in E) $, speed $( \m^{\mY}_e;\, e\in E) $ and gluing condition.
	
	We rely on the local character of the scale and speed
	(see~\cite[Section~7.3]{RevYor}).
	Regarding the scale, from~\cite[Exercise~3.18]{RevYor}, $\mY $ is NSE.
	Regarding the speed, owing to~\eqref{eq_scalespeed} and a change of variables,
	\begin{align}
		\Esp_{x} \sqbraces{T^{\mY}_{a,b}}
		&= \Esp_{\s_e(x)} \sqbraces{T^{\mX}_{\s_e(a),\s_e(b)}}
		\\ &= 2 \int_{\s_e(a)}^{\s_e(b)} G^{(e)}_{\s_e(a),\s_e(b)}(\s_e(x),y) \m_{e}(\rd y)
		\\ &= 2 \int_{\s_e(a)}^{\s_e(b)} G^{(e)}_{\s_e(a),\s_e(b)}(\s_e(x),\q_e(\s_e(y))) \m_{e}(\q_e(\s_e(\rd y)))
		\\ &= 2 \int_{a}^{b} G^{(e)}_{\s_e(a),\s_e(b)}(\s_e(x),\q_e(y)) \m_{e}(\q_e(\rd y)).
	\end{align} 
	for all $e\in E $ and $0\le a<x<b $.
	Hence, from~\cite[VII.3.6,\, VII.3.7]{RevYor}, $\m^{\mY}_e = \m_e \circ \q_e $. 
	
	We may proceed to a partial identification of the domain of the generator.
	The domain of continuity of $\Lop^{\mY}_{e} = \frac 12 \D_{\m^{\mY}_e} \D_{x} $
	is  
	\begin{equation}
		\dom_{C_0}(\Lop^{\mY}_{e})  := \cubraces{ f \in C_0([0,\infty)):\; \Lop^{\mY}_{e} f \in C_0([0,\infty))}. 
	\end{equation}
	We can easily check that 
	\begin{equation}
		\label{eq_proof_domain_relation}
		\begin{aligned}
			f &\in \dom_{C_0}(\Lop_{e})
			& \text{iff}& 
			& f \circ \q_e &\in \dom_{C_0}(\Lop^{\mY}_{e}).
		\end{aligned}
	\end{equation}
	Indeed, let $f \in \dom_{C_0}(\Lop_{e}) $
	and $g =f \circ \q_e  $.
	Then,
	\begin{align}
		\D_{x} g(x)
		&= \lim_{h\to 0} \frac{g(x+h)-g(x)}{h}
		= \lim_{h\to 0} \frac{f(\q_e(x+h))-f(\q_e(x))}{h}
		\\&= \lim_{h\to 0} \frac{f(\q_e(x+h))-f(\q_e(x))}{\s_e(\q_e(x+h))-\s_e(\q_e(x)) }
		= \D_{\s_e} f (\q_e(x)).
	\end{align}
	Also, we have that
	\begin{align}
		\D_{\m_{e}^{\mY}}\D_{x} g(x)
		&
		= \lim_{h\to 0} \frac{\D_{\s_e} f(\q_e(x+h))- \D_{\s_e}f(\q_e(x))}{\m_{e}^{\mY} ([x,x+h))}
		\\ & = \lim_{h\to 0} \frac{\D_{\s_e} f(\q_e(x+h))- \D_{\s_e}f(\q_e(x))}
		{\m_{e} ([\q_e(x),\q_e(x+h)))}
		= \D_{\m_{e}} \D_{\s_e} f(\q_e(x)).
		\label{eq_proof_COS_ms_mx}
	\end{align}
	This proves~\eqref{eq_proof_domain_relation}. 
	
	The gluing condition for $f\in \dom(\Lop)$ is
	\begin{equation}
		\frac{\rho}{2} \D_{\m_e} \D_{\s_e} f_e(0)
		= \sum_{e\in E} \beta_e f_{e}'(0), \quad \text{for all } e\in E.
	\end{equation}
	We observe that
	\begin{equation}
		g'_e(x) = \braces{f_e(\q_e(x))}' = f'_e(\q_e(x)) \q'_e(x) = \frac{f'_e(\q_e(x))}{\s'_e(x)}. 
	\end{equation}
	Hence, from previously established identity~\eqref{eq_proof_COS_ms_mx}, 
	\begin{equation}
		\frac{\rho}{2} \D_{\m_{e}^{\mY}}\D_{x} g_e(x)
		= \sum_{e\in E} \beta_e \s'_e(0) g'_e(0), \quad \text{for all } e\in E.
	\end{equation}
	A renormalization yields
	\begin{align}
		\rho \braces{\sum_{e'\in E} \beta_{e'} \s'_{e}(0) }^{-1} &
		\frac{1}{2} \D_{\m^{\mY}_e} \D_{x} f_{e}(0)
		= 
		\sum_{e'\in E} \sqbraces{ \braces{\sum_{e'\in E} \beta_{e'} \s'_{e}(0) }^{-1} \braces{ \beta_{e'} \s'_{e}(0) }}
		f'_{e'}(0).
	\end{align}
	This completes the proof. 
\end{proof}

%% New Section
\section{Dambis, Dubins--Schwarz for $\mX $}
\label{sec_first_char}

Let $\mX = (I, X)$ be a diffusion on the star graph $\Gamma$, where $I$ denotes the edge occupation process and $X$ represents the distance-to-origin process. In this section, we prove that $\mX$ can be expressed as a time-changed Walsh Brownian motion, where the time-change is the quadratic variation process $\qv{X}$ of $X$. To establish this result, we first show that $X$ can be viewed as a time-changed reflected Brownian motion. We then identify the law of the process $\mW := \mX_{\varphi(t)}$, where $\varphi$ denotes the right inverse of the quadratic variation process $\qv{X} $.

This result serves as a fundamental step in the proof of Theorem~\ref{thm_main}, which
identifies $\qv{X} $ in terms of sample paths. 

\begin{theorem}
	\label{thm_DDS_Walsh}
	Let $\mX=(I,X) $ be the NSE regular diffusion on $\Gamma $, defined by~~\ref{item_A1}--\ref{item_A2}, on the probability space $\mc P_{\bx} := (\Omega,\process{\bF_t},\Prob_{\bx})$, where $\Prob_{\bx}$-a.s.,
	$\mX_0 = \bx $. 
	We consider the time-change $(\varphi(t);\, t\ge 0) $, defined as the right-inverse of 
	$\qv{X} $.
	There exists a Walsh Brownian motion $\mW=(J,R) $ on $\Gamma^{\ext} $ of bias parameter $(\beta_e;\, e\in E) $, adapted to $\process{\bF_{\varphi(t)}}$, defined on an extension
	of $\mc P_x $ such that 
	\begin{equation}
		\label{eq_thm_graph_DDS_Walsh}
		\begin{aligned}
			\mX_t &= \mW_{\qv{X}_t},
			& \forall\,& t\ge 0.
		\end{aligned}
	\end{equation}
\end{theorem}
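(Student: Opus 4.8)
The plan is to reduce the statement to the one-dimensional Dambis--Dubins--Schwarz theorem applied to the distance-to-origin process $X$, then graft the edge-selection mechanism on top to recover the full Walsh structure. Since $\mX$ is NSE, each $\s_e(x)=x$, so on every edge the process behaves on natural scale; the distance-to-origin process $X = d(\mX,\bv)$ should then be a continuous local martingale away from $\bv$ and, because of the reflecting/sticky behavior at the vertex, a reflected continuous semimartingale overall. The first step is therefore to show that $X$ is a reflected Brownian motion up to a time-change by $\qv{X}$, i.e.\ that $R := X_{\varphi(\cdot)}$ is a reflected Brownian motion. For this I would apply L\'evy's characterization in the reflected-boundary form: verify that $X$ is a continuous nonnegative semimartingale whose martingale part has quadratic variation $\qv{X}$ that is, by Proposition~\ref{prop_strict_monotony}, strictly increasing, so that $\varphi$ is a genuine (proper) inverse; then $R = X_{\varphi(\cdot)}$ is a continuous martingale on $(0,\infty)$ with $\qv{R}_t = t$, reflected at $0$, hence a reflected Brownian motion. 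This is the piece modeled on the proof of~\cite[Theorem~IV.47.1]{RogWilV2}.

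The second step is to control the edge process $J$ under the time-change. Because $X$ is $\varphi$-continuous on each excursion interval (the quadratic variation does not grow while $\mX$ sits at $\bv$, and the edge label is constant on each excursion away from $\bv$), the composed edge process $J\circ\varphi$ inherits the property that it is constant on each excursion of $R$ away from $0$ and re-randomizes only at $\bv$. I would then invoke Corollary~\ref{cor_edge_transition_NSE}: since $\mX$ is NSE, the edge-selection probabilities satisfy $\beta^b_e=\beta_e$ for every $b\in(0,l_*)$, exactly the excursion-law fingerprint that identifies a Walsh Brownian motion. Concretely, defining $\mW := (J,R) = \mX_{\varphi(\cdot)}$ gives a diffusion on $\Gamma^0$ whose radial part $R$ is reflected Brownian motion and whose excursions select edge $e$ with probability $\beta_e$; by the construction criterion Proposition~\ref{prop_construction_Walsh} this process \emph{is} the $(\beta_e;\,e\in E)$-Walsh Brownian motion. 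One must take $\rho=0$ here (equivalently check the stickiness contribution vanishes under the $\qv{X}$-time-change), since $\qv{X}$ does not accumulate while $\mX$ rests at $\bv$; the time-change by quadratic variation strips out exactly the sticky holding time, leaving a non-sticky Walsh Brownian motion.

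The final step is to recompose: since $\qv{X}$ is strictly increasing with proper inverse $\varphi$, we have $\mX_t = \mW_{\qv{X}_t}$ by substituting $t\mapsto \qv{X}_t$ into $\mW = \mX_{\varphi(\cdot)}$ and using $\varphi(\qv{X}_t)=t$. Adaptedness of $\mW$ to $\process{\bF_{\varphi(t)}}$ follows because $\varphi(t)$ is an $\process{\bF_t}$-stopping time (Example following Definition~\ref{def_TC_continuity}), and the extension of $\mc P_x$ is needed only to support the extra randomness required to define the edge label on the zero set of excursions, as in the standard DDS construction when $\qv{X}_\infty<\infty$.

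I expect the main obstacle to be the rigorous identification of the radial part $R$ as a genuine \emph{reflected} Brownian motion rather than merely a time-changed martingale, precisely because the generalized operator framework means $X$ need not admit a clean It\^o expansion. The delicate points are: (i) showing $X$ is a semimartingale with a well-defined local time at $0$ and that the boundary term in its Tanaka/Skorokhod decomposition is carried entirely by the local time $\loct{X}{0}{\cdot}$; and (ii) verifying $\varphi$-continuity of $X$ at the vertex so that Lemma~\ref{def_TC_stochastic_integral} and Lemma~\ref{def_TC_local_time} apply to transfer quadratic variation and local time across the time-change. Once $\varphi$-continuity and strict monotonicity of $\qv{X}$ are in hand, the martingale identification and the Walsh recomposition are comparatively routine, so the bulk of the work is establishing the semimartingale/reflection structure of $X$ without second-order expansions.
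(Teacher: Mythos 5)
Your proposal follows essentially the same route as the paper: establish the semimartingale/Tanaka--Skorokhod structure of $X$ and identify $R=X_{\varphi(\cdot)}$ as a reflected Brownian motion via the Dambis--Dubins--Schwarz argument modeled on \cite[Theorem~IV.47.1]{RogWilV2}, then use the preserved edge-transition probabilities (Corollary~\ref{cor_edge_transition_NSE}) together with the construction criterion of Proposition~\ref{prop_construction_Walsh} to identify $\mW=(J,R)$ as the $(\beta_e)$-Walsh Brownian motion, and recompose using the strict monotonicity of $\qv{X}$ from Proposition~\ref{prop_strict_monotony}. You also correctly isolate the genuinely delicate points (the local-martingale identification of the Tanaka martingale part without second-order expansions, and the transfer of local time across the time-change), which is exactly where the paper's Lemma~\ref{thm_graph_DDS} invests its effort.
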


We now prove a Dambis--Dubins--Schwarz theorem for the distance-to-origin process $X$, where the base process is a reflected Brownian motion rather than a standard Brownian motion. 
The proof is inspired the proof of \cite[Theorem~IV.47.1]{RogWilV2} in the presence of reflecting boundaries. 

\begin{lemma}
	\label{thm_graph_DDS}
	We consider the setting of Theorem~\ref{thm_graph_DDS}. 
	\begin{enuroman}
		\item The process $X$ is a semimartingale. 
		\label{item_semimartingale}
		\item There exists a reflected Brownian motion $R $, adapted to $\process{\bF_{\varphi(t)}}$, defined 
		on an extension of $\mc P_{\bx} $ such that
		$X_t = R_{\qv{X}_t} $, for all $t\ge 0 $.
		\label{item_DDS}
	\end{enuroman}
\end{lemma}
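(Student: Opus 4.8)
The plan is to produce the canonical semimartingale decomposition of the radial part $X$ and then run a Dambis--Dubins--Schwarz argument in which the base process is a reflected Brownian motion.

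\textbf{Part \ref{item_semimartingale}.} First I would exhibit the decomposition $X = X_0 + M + \Phi$, where $M$ is a continuous local martingale and $\Phi$ is continuous, non-decreasing, and grows only on $\{t : X_t = 0\}$. Since $\mX$ is NSE, the distance function $\bx \mapsto d(\bx,\bv)$ satisfies $\Lop_e d(\cdot,\bv) = \frac{1}{2}\D_{\m_e}\D_x (x) = \frac{1}{2}\D_{\m_e}(1) = 0$ on the interior of each edge, so on $\{X>0\}$ the coordinate process is on natural scale and hence a local martingale; the only obstruction to $d(\cdot,\bv)$ lying in $\dom_{C_0}(\Lop)$ is the gluing condition~\eqref{eq_def_lateral}, which the distance function violates. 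To capture the reflection term I would, exactly as in the proof of Proposition~\ref{prop_node_infinitesimal}, fix $a>0$ and construct $f_a \in \dom_{C_0}(\Lop)$ that agrees with $d(\cdot,\bv)$ outside $B(\bv,a)$ and is convexly capped inside so that~\eqref{eq_def_lateral} holds; then $\Lop f_a \ge 0$ is supported in $B(\bv,a)$, and Dynkin's formula makes $f_a(\mX_t) - \int_0^t \Lop f_a(\mX_s)\vd s$ a martingale. Letting $a \to 0$, the martingale parts converge to $M$ and the increasing compensators $\int_0^t \Lop f_a(\mX_s)\vd s$ converge to a non-decreasing process $\Phi$ concentrated on $\{X=0\}$, giving part~\ref{item_semimartingale}.

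\textbf{Part \ref{item_DDS}.} With this decomposition $\qv{X} = \qv{M}$. By Proposition~\ref{prop_strict_monotony}, $\qv{X}$ is strictly increasing, so $\varphi$ is a continuous proper inverse with no jumps and both $X$ and $M$ are trivially $\varphi$-continuous. Setting $R_u := X_{\varphi(u)}$ and $B_u := M_{\varphi(u)}$, Lemma~\ref{def_TC_stochastic_integral} gives that $B$ is a continuous $\process{\bF_{\varphi(t)}}$-local martingale with $\qv{B}_u = \qv{M}_{\varphi(u)} = \qv{X}_{\varphi(u)} = u$, hence a Brownian motion by L\'evy's characterization --- on a suitable extension of $\mc P_{\bx}$ when $\qv{X}_\infty < \infty$, precisely the enlargement used in \cite[Theorem~IV.47.1]{RogWilV2}. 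Time-changing the decomposition yields $R_u = R_0 + B_u + \Phi_{\varphi(u)}$, where $\Phi_{\varphi(\cdot)}$ is continuous, non-decreasing, and, by Lemma~\ref{def_TC_local_time} together with the support of $\Phi$, increases only on $\{u : R_u = 0\}$. Since also $R \ge 0$, uniqueness in the Skorokhod reflection problem identifies $R$ as a reflected Brownian motion with $\Phi_{\varphi(u)} = \frac{1}{2}\loct{R}{0}{u}$. Rewriting $u = \qv{X}_t$ gives $X_t = R_{\qv{X}_t}$, which is part~\ref{item_DDS}.

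\textbf{Main obstacle.} The hard part is part~\ref{item_semimartingale}. Because the $\Lop_e$ are generalized second-order operators, functions in $\dom_{C_0}(\Lop)$ need not be twice differentiable, so the classical It\^o--Tanaka second-order expansion --- the usual route to the semimartingale decomposition of a distance process --- is unavailable. The argument must instead run entirely through Dynkin's formula, a careful construction and sign analysis of the capping functions $f_a$ near $\bv$ (this is where the gluing condition enters), and the identification of the limiting compensator $\Phi$ as a local-time-type functional supported at the vertex. A secondary point, already provided by Proposition~\ref{prop_strict_monotony}, is that the strict monotonicity of $\qv{X}$ --- which persists in the sticky case because the zero set of $X$ carries no interval --- makes $\varphi$ a genuine inverse, so the time-change compresses the sticky occupation at $\bv$ to a null set and no randomization beyond the standard Brownian extension is required.
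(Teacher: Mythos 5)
Your part~\ref{item_DDS} is essentially sound and, if anything, cleaner than the paper's: the paper first applies Tanaka's formula to write $X = X_0 + Z + \frac12\loct{X}{0}{}$ with $Z := \int_0^\cdot \indicB{X_s>0}\vd X_s$, proves $Z$ is a local martingale by an excursion decomposition, applies Dambis--Dubins--Schwarz to $Z$ to get a Brownian motion $W$, and then builds $R$ from $W$ via L\'evy's construction; you instead time-change the abstract decomposition directly and identify $R$ through uniqueness in the Skorokhod reflection problem. Both work, provided one has in hand a decomposition $X = X_0 + M + \Phi$ with $M$ a continuous local martingale and $\Phi$ continuous, increasing, and carried by $\{X=0\}$.

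That is exactly where your part~\ref{item_semimartingale} has a genuine gap. The step ``letting $a\to 0$, the martingale parts converge to $M$ and the compensators $\int_0^t \Lop f_a(\mX_s)\vd s$ converge to a non-decreasing $\Phi$'' is asserted, not proved, and it is the whole difficulty: a limit of semimartingales need not be a semimartingale, and proving that the compensators converge essentially requires either an a priori $L^2$/total-variation bound on the family $(M^a)$ or prior knowledge that $X$ admits a Doob--Meyer decomposition --- which is what you are trying to establish. There is a standard repair that stays within your framework: since $\Lop f_a \ge 0$, each $f_a(\mX^{T_b})$ is a submartingale, and since $\sup_{\bx}|f_a(\bx)-d(\bx,\bv)|\to 0$ (after localizing by $T_b$, since $d(\cdot,\bv)\notin C_0(\Gamma)$), the submartingale inequality passes to the limit and shows $X^{T_b}$ is a submartingale; Doob--Meyer then *gives* you the decomposition, uniqueness identifies $\lim_a\int_0^\cdot\Lop f_a(\mX_s)\vd s$ with the compensator, and the support of $\Lop f_a$ in $B(\bv,a)$ shows $\Phi$ is carried by $\{X=0\}$. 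The paper reaches the same submartingale property more directly, with no generator construction at all: by \cite[Corollary~V.46.15]{RogWilV2} the NSE process stopped before its next visit to $\bv$ is a martingale on each edge, and non-negativity of $X$ then yields $X^{T_b}_s \le \Esp[X^{T_b}_t\mid\bF_s]$. Your route is viable and has the merit of making the role of the gluing condition~\eqref{eq_def_lateral} explicit, but as written the convergence of the decomposition is a missing step, not a technicality; you should restructure it through the submartingale property and Doob--Meyer uniqueness.
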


\begin{proof}
	We recall the definitions of $T_0$, $T_b$ and $H_b(\bx) $,
	from Notation~\ref{notation_exit_times}.
	
	We first show that:
	\begin{enumerate}
		\item \label{item_proof_TC_1} The stopped process $X^{T_b} $ is a local submartingale and
		hence a semimartingale; 
		\item \label{item_proof_TC_2} The process $Z = X - X_0 - \frac{1}{2}\loct{X}{0}{} $ is a continuous local martingale,
		where $\loct{X}{0}{} $ is the local time of $X$ at $0$.
	\end{enumerate} 
	
	Regarding~\ref{item_proof_TC_1}, define
	\begin{equation}
		\begin{aligned}
			\rho_t &:= \inf \{u > t : X^{T_b}_u = 0\},
			& \forall\, t&\ge 0.
		\end{aligned}
	\end{equation}
	From~\cite[Corollary~V.46.15]{RogWilV2}, the process $(X_{t \wedge T_b \wedge \rho_s}; t \ge s)$ is a martingale with respect to the filtration $(\mathcal{F}_t)_{t \ge 0}$. This implies that $(X_{t \wedge \rho_s}; t \ge s)$ is a local martingale, reduced by the sequence $(T_k)_{k \in \mathbb{N}}$. 
	
	Since $X$ is non-negative, we have
	\begin{equation}
		\label{eq_submartingale}
		X^{T_b}_{s} = \mathbb{E} \big[ X^{T_b}_{t \wedge \rho_s} \big| \mathcal{F}_{s} \big] \le \mathbb{E} \big[ X^{T_b}_{t} \big| \mathcal{F}_{s} \big].
	\end{equation}
	This shows that $X^{T_b}$ is a submartingale, and consequently, $X$ is a local submartingale. 
	The writing~\eqref{eq_submartingale} yields a Doob-Meyer decomposition for $X$ (see, e.g.,~\cite[Theorem~VI.32.3]{RogWilV2}). 
	Therefore, by definition, $X$ is a semimartingale, which establishes~\ref{item_semimartingale}.
	
	Regarding~\ref{item_proof_TC_2}, since $X$ is a continuous semimartingale, it has a well defined local time field $(\loct{X}{y}{t};\;t \ge 0,\; y\in \IR) $.
	From the Tanaka formula (see, e.g.,~\cite[Theorem~VI.1.2]{RevYor}),
	\begin{equation}
		\label{eq_proof_repr_refl_1}
		\begin{aligned}
			& &X_t &= X^{+}_t = X_0 + Z_t
			+ \frac{1}{2} \loct{X}{0}{t}, \quad \forall\, t \ge 0,
			\\ \text{where}&
			& Z_t&:= \int_{0}^{t} \indicB{X_s>0} \vd X_s, \quad \forall\, t \ge 0. 
		\end{aligned}
	\end{equation}
	The process $Z$ is a local martingale.
	To prove this, we consider the
	following sequences of stopping times.
	For all  $\delta >0$, let $(\sigma^{\delta}_k,\tau^{\delta}_k;\; k\in \IN_0) $
	defined recursively by $\sigma^{\delta}_0 := 0$, and
	\begin{equation}
		\label{eq_proof_tausigma_stopping_times_def}
		\begin{aligned}
			\tau^{\delta}_k &:= \inf\{t>\sigma^{\delta}_{k}:\; X_t=0\},
			& \forall\, k &\in \IN_0,\\ 
			\sigma^{\delta}_k &:= \inf\{t>\tau^{\delta}_{k-1}:\; X_t=\delta\},
			& \forall\, k &\in \IN.
		\end{aligned}
	\end{equation} 
	We observe that each random interval $[\sigma^{\delta}_k,\tau^{\delta}_k) $
	is subset of an excursion time interval of $X$.
	For all $k\in \IN_0 $ and $\delta>0 $, we consider the process
	\begin{equation}
		\begin{aligned}
			N^{k,\delta}_t &:= \int_{0}^{t} \indicB{t\in (\sigma^{\delta}_k,
				\tau^{\delta}_k]} \vd X_s
			= X_{\tau^{\delta}_k\wedge t} - X_{\sigma^{\delta}_k\wedge t},
			& \forall\, &t \ge 0,
		\end{aligned}
	\end{equation}
	which is a local martingale, reduced by $(\sigma^{\delta}_k + T_n \circ \theta_{\sigma^{\delta}_k} ;\;n\in \IN) $. 
	Its quadratic variation is 
	\begin{equation}
		\begin{aligned}
			\qv{N^{k,\delta}}_t
			&= \int_{0}^{t} \indicB{t\in (\sigma^{\delta}_k,
				\tau^{\delta}_k]} \vd \qv{X}_s,
			& t& \ge 0.
		\end{aligned}
	\end{equation} 
	The process $N^{\delta} := \sum_{k} N^{k,\delta} $ is also a local martingale
	with quadratic variation
	\begin{equation}
		\begin{aligned}
			\qv{N^{\delta}}_t
			&= \sum_{k\in \IN_0} \int_{0}^{t} \indicB{t\in (\sigma^{\delta}_k,
				\tau^{\delta}_k]} \vd \qv{X}_s,
			& t& \ge 0.
		\end{aligned}
	\end{equation} 
	From the monotone convergence theorem, it holds almost surely that
	\begin{equation}
		\begin{aligned}
			\qv{N^{\delta}}_t &\xlra{\delta \to 0}{} \int_{0}^{t} \indicB{X_s>0} \vd \qv{X}_s = \qv{Z}_t 
		\end{aligned}
	\end{equation}
	Hence, from, e.g.,~\cite[Theorem~IV.4.1]{RevYor}, $N^{\delta} $ converges locally uniformly in time, in $L^{2} $, 
	as $\delta \to 0 $ to $Z$.
	
	Since $X = y + Z + \frac{1}{2} \loct{X}{0}{} $ and $\loct{X}{0}{} $ increases only when $X=0 $, from Skorokhod's lemma (see, e.g., \cite[Lemma~V.6.2]{RogWilV2}),
	\begin{equation}
		\frac{1}{2}\loct{X}{0}{t}
		= 0 \vee \braces{ \sup \{ -y -Z_s ;\; s\le t \} }
	\end{equation} 
	
	Let $(\psi(t);\;t\ge 0) $ be the right-inverse of $\qv{Z} $.
	From \cite[Theorem~IV.34.11]{RogWilV2}, there exists a $\process{\bF_{\psi(t)}}$-adapted standard Brownian motion $W$, defined on an extension of $\mc P_x $, such that 
	\begin{equation}
		\label{eq_proof_repr_refl_3}
		\begin{aligned}
			Z_t &= W_{\qv{Z}_t},
			& \forall\, t\ge 0.
		\end{aligned}
	\end{equation}
	From L\'evy's presentation of the reflecting Brownian motion
	(see, e.g., \cite[\S V.6.2]{RogWilV2}), the process
	$R $ defined as
	\begin{equation}
		\label{eq_proof_repr_refl_2}
		\begin{aligned}
			R_t &:= y + W_t + \frac{1}{2} \loct{W}{0}{t},
			&\forall\, t&\ge 0,
		\end{aligned}
	\end{equation}
	is an $\process{\bF_{\psi(t)}}$-adapted reflecting Brownian motion.
	
	From Lemma~\ref{def_TC_local_time}, $\loct{W}{0}{\qv{Z}_t}
	= \loct{Z}{0}{t} $, for all $t\ge 0 $.
	Thus, from~\eqref{eq_proof_repr_refl_1}, \eqref{eq_proof_repr_refl_3} and~\eqref{eq_proof_repr_refl_2}, $\qv{X} = \qv{Z} $, $\psi = \varphi $, and $X_t = R_{\qv{X}_t} $, for all $t\ge 0 $.
	This proves~\ref{item_DDS}, which completes the proof. 
\end{proof}

\begin{proof}
	[Proof of Theorem~\ref{thm_DDS_Walsh}]
	Let $\mW $ be the process  
	$(R,J) $, where $R$ is issued from Theorem~\ref{thm_graph_DDS}
	and $J $ is defined by $J(\qv{X}_t)=I(t) $, $t\ge 0 $.
	We first observe that $\qv{X} $ is an additive functional of $\mX $ that 
	satisfies the hypotheses of~\cite[Theorem~10.10]{Dyn1}. %p.320 
	Hence, the process $\mW $ is Markov with respect to 
	the filtration $\process{\bF_{\varphi(t)}}$.
	Its scale and speed $(\s_e,\m_e;\, e\in E) $ are
	\begin{equation}
		\begin{aligned}
			\s^{\mW}_e &= x,
			& \m^{\mW}_e &= \vd x,
			& x&>0. 
		\end{aligned}
	\end{equation}
	The process satisfies Assumption~\ref{assumption_regularity}, hence
	from Proposition~\ref{prop_FD}, it is a Feller and strong Markov.

	Let $T^{\mX}_b $, $T^{\mW}_b $, $b>0 $, be defined as
	\begin{align}
		T^{\mX}_b :={} \inf\{t\ge 0: \; X_t = b\} \quad \text{and} \quad
		T^{\mW}_b :={} \inf\{t\ge 0: \; R_t = b\} .
	\end{align} 
	We observe that $T^{\mX}_b = \qv{X}^{-1}_{T^{\mW}_b} $ and that
	\begin{align}
		\Prob_{\bv} \braces{ J(T^{\mW}_{\delta}) = e }
		= \Prob_{\bv} \braces{ I(\varphi(T^{\mW}_{\delta})) = e }
		= \Prob_{\bv} \braces{ I(T^{\mW}_b) = e },
	\end{align}
	for all $e\in E $ and $b>0 $.
	Taking the limit yields that
	\begin{equation}
		\lim_{\delta \to 0} \Prob_{\bv} \braces{ J(T^{\mW}_{\delta}) = e }
		= \beta_e. 
	\end{equation}
	
	From Corollary~\ref{prop_construction_Walsh}, the process $\mW $ is a Walsh Brownian motion on
	$\Gamma^{\ext} $, adapted to $\process{\bF_{\varphi(t)}}$, with bias parameters $(\beta_e;\, e\in E) $.
	This completes the proof.
\end{proof}

We now prove several corollaries of Theorem~\ref{thm_DDS_Walsh}, culminating in our first (non-homogeneous) occupation times formula. 
This formula is later refined in Proposition~\ref{prop_occupation_ItoMcKean}.

\begin{corollary}
	\label{cor_DDS_rescaled}
	Consider the following:
	\begin{enumerate}
		\item Let $\mX =(I,X)$ be a diffusion on $\Gamma $, defined by~\ref{item_A1}--\ref{item_A2}, on the probability space $\mc P_{\bx} := (\Omega,\process{\bF_t},\Prob_{\bx})$, where $\Prob_{\bx}$-a.s.,
		$\mX_0 = \bx $.
		\item Let $\mY = (I',Y) $, with $\mY = s(\mX) $, and $(\varphi(t);\, t\ge 0) $ be the right-inverse of $\qv{Y} $.
		\item Let $q $ be the inverse scale of $\mX $, defined in~\eqref{eq_q}.
	\end{enumerate}
	Under Assumption~\ref{assumption_regularity}, the following results hold: 
	\begin{enuroman}
		\item There exists a Walsh Brownian motion $\mW=(J,R) $ on $\Gamma^{\ext} $ of bias parameters
		$(\beta^{\mY};\, e\in E) $, adapted to $\process{\bF_{\varphi(t)}} $, defined on an extension
		of $\mc P_x $, such that 
		$\mX = q(\mW_{\qv{X_t}}) $, for all $t\ge 0 $. \label{item_DDS_rescaled}
		
		\item The process $X $ is a semimartingale. \label{item_semimartingale_rescaled}
	\end{enuroman} 
	The family $(\beta^{\mY};\, e\in E) $ is given in
	Proposition~\ref{prop_change_of_scale}. 
\end{corollary}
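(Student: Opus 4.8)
The plan is to reduce the general diffusion $\mX$ to its natural‑scale‑on‑edges counterpart $\mY=\s(\mX)$, apply the Dambis--Dubins--Schwarz representation already available for NSE diffusions, and then transport the representation back through the inverse scale $q$. Concretely, I would combine two earlier results: Proposition~\ref{prop_change_of_scale}, which under Assumption~\ref{assumption_regularity} identifies $\mY:=\s(\mX)$ as a \emph{regular NSE} diffusion on $\s(\Gamma)$ with bias parameters exactly the $(\beta^{\mY}_e;\,e\in E)$ of the statement, and Theorem~\ref{thm_DDS_Walsh}, which represents any such NSE diffusion as a time‑changed Walsh Brownian motion with the same bias parameters. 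For part~\ref{item_DDS_rescaled}, I would first note that each $\s_e$ is a continuous increasing bijection of $O_e$ fixing $\bv$, so the edge‑occupation process is unchanged and I may write $\mY=(I,Y)$ with $Y_t=\s_{I(t)}(X_t)$ (that is, $I'=I$). Since $\mY$ is NSE and inherits Assumption~\ref{assumption_regularity}, Theorem~\ref{thm_DDS_Walsh} applies to $\mY$ and produces, on an extension of $\mc P_{\bx}$, a $(\beta^{\mY}_e;\,e\in E)$‑Walsh Brownian motion $\mW=(J,R)$ on $\Gamma^{\ext}$, adapted to $\process{\bF_{\varphi(t)}}$ with $\varphi$ the right‑inverse of $\qv{Y}$, satisfying $\mY_t=\mW_{\qv{Y}_t}$ for all $t\ge 0$. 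Composing with the inverse scale of~\eqref{eq_q}, for which $q\circ \s=\Id_{\Gamma}$, gives $\mX=q(\mY)$ and hence $\mX_t=q\big(\mW_{\qv{Y}_t}\big)$ for every $t\ge 0$ (the clock being the quadratic variation of the NSE distance process $Y$, whose right‑inverse is $\varphi$), which is the asserted identity.

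For part~\ref{item_semimartingale_rescaled}, the representation of part~\ref{item_DDS_rescaled} yields $X_t=q_{I(t)}(Y_t)$, and Lemma~\ref{thm_graph_DDS}\ref{item_semimartingale}, applied to the NSE diffusion $\mY$, shows that the distance process $Y$ is a continuous semimartingale. It then remains to transfer the semimartingale property through $q$. I would do this edgewise via the It\^o--Tanaka change‑of‑variable formula: representing each $q_e$ locally as a difference of two convex functions, $q_e(Y)$ is a semimartingale, and patching these expressions together is legitimate because the edge process $I$ is constant on every excursion of $Y$ away from $0$ and all edges agree at $\bv$; one thereby obtains a global continuous decomposition of $X$ across $\{X=0\}$, establishing that $X$ is a semimartingale.

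I expect the transfer in part~\ref{item_semimartingale_rescaled} to be the main obstacle, while part~\ref{item_DDS_rescaled} is a routine composition of two already‑proven statements. The delicate point is that the semimartingale property of $X$ hinges on $q$ (equivalently on the scale $\s$) being \emph{locally a difference of convex functions}: for a merely continuous increasing scale this regularity can genuinely fail (for instance a scale of the form $x\mapsto x+C(x)$ with $C$ a Cantor function produces an inverse scale whose derivative oscillates between $0$ and $1$ and is not of bounded variation), in which case $q(Y)$ is not a semimartingale. The careful work is therefore to argue that the inverse scale has the needed local difference‑of‑convex regularity in the vicinity of $\bv$ and on each edge, using the normalization $\s_e(0)=0$ and the boundedness of $\s$ near $\bv$ furnished by Assumption~\ref{assumption_regularity} together with the one‑dimensional semimartingale characterization of regular diffusions, and to ensure the edgewise It\^o--Tanaka decompositions match uniformly at the junction so that the gluing across $\{X=0\}$ produces a bona fide global semimartingale.
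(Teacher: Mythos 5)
Your part~\ref{item_DDS_rescaled} is exactly the paper's argument: Proposition~\ref{prop_change_of_scale} identifies $\mY=\s(\mX)$ as a regular NSE diffusion with bias parameters $(\beta^{\mY}_e;\,e\in E)$, Theorem~\ref{thm_DDS_Walsh} applied to $\mY$ produces the Walsh Brownian motion with $\mY_t=\mW_{\qv{Y}_t}$, and composition with the inverse scale $q$ of~\eqref{eq_q} gives the claim (your reading of the clock as $\qv{Y}_t$, whose right-inverse is the $\varphi$ of the hypotheses, is the intended one). Your part~\ref{item_semimartingale_rescaled} also follows the paper's route: write $X_t=\sum_{e\in E}\indicB{I(t)=e}\,\q_e(Y_t)$, invoke Lemma~\ref{thm_graph_DDS}\ref{item_semimartingale} for the semimartingale property of $Y$, apply It\^o--Tanaka to each $\q_e(Y)$, and patch the edgewise decompositions using the predictable indicators and the fact that all $\q_e$ agree at $\bv$.

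The one substantive divergence is in how the key hypothesis of It\^o--Tanaka is handled, and there your proof has a genuine gap. The paper disposes of the issue in one line, asserting that all $\q_e$ are differences of convex functions by reference to \cite[Section~5]{ccinlar1980semimartingales}. You instead single this out as the main obstacle and propose to derive the local difference-of-convex property of $\q_e$ from the normalization $\s_e(0)=0$ and the boundedness of $\s$ near $\bv$ furnished by Assumption~\ref{assumption_regularity}. That derivation is never supplied, and it cannot succeed from those ingredients: a normalization and a bound on $\s$ control neither the variation of $\q_e'$ nor anything equivalent to convexity. Your own Cantor-type example $\s_e(x)=x+C(x)$ satisfies every hypothesis of the corollary (continuous, strictly increasing, $\s_e(0)=0$, natural boundaries), yet $\q_e=\s_e^{-1}$ is not locally a difference of convex functions, and by the converse direction of the \c{C}inlar--Jacod--Protter--Sharpe characterization $\q_e(Y)$ is then \emph{not} a semimartingale on the interior of that edge. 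So part~\ref{item_semimartingale_rescaled} is left unproved at precisely the step you flagged; closing it requires the difference-of-convex property of the inverse scales as an additional hypothesis on $(\s_e;\,e\in E)$ rather than as a consequence of Assumption~\ref{assumption_regularity}. It is fair to note that the paper's own proof rests on the same unsupported assertion, so your diagnosis of where the difficulty sits is accurate; what is missing on both sides is an argument that actually resolves it.
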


\begin{proof}
	From Proposition~\ref{prop_change_of_scale}, the process $\mY $
	is a NSE diffusion on $s(\Gamma) $ of
	speed measure $(m^{\mY}_e;\, e\in E) $,
	stickiness $\rho^{\mY} $ and bias parameters
	$(\beta^{\mY}_e;\, e\in E) $. 
	Hence, \ref{item_DDS_rescaled} is consequence of  Theorem~\ref{thm_DDS_Walsh} applied to $\mY $.
	
	Regarding the first assertion~\ref{item_semimartingale_rescaled}, from Lemma~\ref{thm_graph_DDS},
	the process $Y $ is a semimartingale as time-changed semimartingale. 
	We may decompose the process $X$ as 
	\begin{equation}
		X_t = \sum_{e\in E} \indicB{I(t)=e} \q_{e}(Y_t).
	\end{equation}
	From the discussion in~\cite[Section~5]{ccinlar1980semimartingales}, all functions $\q_e $ are differences of convex functions.
	Hence, from the It\^o--Tanaka formula (see \cite[Theorem~VI.1.5]{RevYor})
	\begin{equation}
		\label{eq_proof_Ito_qe}
		\begin{aligned}
			\vd \q_{e}(Y_t)
			= \q_{e}'(Y_t) \vd Y_t + \frac{1}{2} \int_{[0,\infty) } \vd \loct{Y}{y}{t}
			\, \q_{e}''(\rd y).
		\end{aligned}
	\end{equation}
	Multiplying~\eqref{eq_proof_Ito_qe} with the predictable process 
	$(\indicB{I(t)=e};\, t\ge 0) $, integrating by time, and summing over $e\in E $ yields
	\begin{equation}
		X_t = X_0 + \sum_{e\in E} \int_{0}^{t} \indicB{I(s)=e} \q'_e(Y_s) \vd Y_s
		+  \sum_{e\in E}  \int_{[0,\infty) } \int_{0}^{t} \indicB{I(s)=e} \vd \loct{Y}{y}{s} \, \q_{e}''(\rd y), 
	\end{equation}
	which is an explicit Doob-Meyer decomposition of $X$.
	This completes the proof. 
\end{proof}

We now present our first occupation times formula that we will
upgrade later in Section~\ref{sec_occupation_II}. 

\begin{corollary}
	[occupation times formula I]
	We consider the assumptions of Corollary~\ref{cor_DDS_rescaled}.
	It holds that 
	\begin{equation}
		\int_{0}^{t} f_{I(s)} (s,X_s)  \vd \qv{X}_s 
		= \sum_{e\in E} \int_{\IR} \int_{0}^{t}  \indicB{I(s)=e} f_{e}(s,y) \vd \loct{X}{y}{s} \vd y,
	\end{equation} 
	for all $t\ge 0 $ and all measurable functions $f $ on $[0,\infty) \times \Gamma $. 
\end{corollary}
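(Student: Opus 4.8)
The plan is to reduce the claim to the occupation times formula for the one-dimensional distance-to-origin semimartingale $X$, after splitting the integrand according to the edge that the process occupies. By Corollary~\ref{cor_DDS_rescaled}\ref{item_semimartingale_rescaled}, $X$ is a continuous semimartingale, so it carries a jointly measurable local time field $(\loct{X}{y}{t};\, y\in\IR,\, t\ge 0)$ together with the occupation density identity $\qv{X}_t=\int_\IR\loct{X}{y}{t}\vd y$. Writing the integrand as $f_{I(s)}(s,X_s)=\sum_{e\in E}\indicB{I(s)=e} f_e(s,X_s)$ (here $f_e(s,x):=f(s,(e,x))$) and reducing, by linearity and monotone convergence, to bounded $f\ge 0$, it suffices to prove for each fixed $e\in E$ that
\[
\int_0^t \indicB{I(s)=e} f_e(s,X_s)\vd\qv{X}_s = \int_\IR\int_0^t \indicB{I(s)=e} f_e(s,y)\vd\loct{X}{y}{s}\vd y.
\]

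The engine is the space--time version of the occupation times formula (the extension of \cite[Corollary~VI.1.6]{RevYor}): for every bounded measurable deterministic $G:[0,\infty)\times\IR\to\IR$,
\[
\int_0^t G(s,X_s)\vd\qv{X}_s = \int_\IR\int_0^t G(s,y)\vd\loct{X}{y}{s}\vd y, \qquad t\ge 0,
\]
which follows from the spatial formula by a monotone-class argument started from $G(s,y)=\indicB{s\le u}h(y)$. The crucial point I would establish is that this identity holds almost surely \emph{simultaneously} for all such $G$: applying it to a countable generating family of $G$'s and invoking the monotone-class theorem yields a single $\Prob_{\bx}$-null set $N$ off which the formula is valid for every bounded measurable $G$. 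First I would then fix $\omega\notin N$ and apply the formula to the (now deterministic) function $G(s,y):=\indicB{I(s,\omega)=e} f_e(s,y)$, bounded and measurable in $(s,y)$. Since on the left-hand side $G(s,X_s)=\indicB{I(s)=e} f_e(s,X_s)$, while on the right-hand side $G(s,y)=\indicB{I(s)=e} f_e(s,y)$ is evaluated at the \emph{level} $y$, this produces exactly the edge-$e$ identity; summing over $e\in E$ finishes the argument.

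I expect the main obstacle to be precisely this last manoeuvre: the edge indicator $\indicB{I(s)=e}$ is a genuine process, not a pointwise function of $(s,X_s)$, so the textbook formula does not apply verbatim. The $\omega$-by-$\omega$ device circumvents this, but it hinges on the common-null-set version of the occupation times formula, whose proof rests on the support property of local time—namely that $\vd_s\loct{X}{y}{s}$ is carried by $\{s:X_s=y\}$, which is what makes the substitution of $f_e(s,X_s)$ by $f_e(s,y)$ legitimate—combined with the monotone-class theorem. A minor point to dispatch is the behaviour at the junction vertex, where the edge label $I(s)$ is undefined: since $\{X_s=0\}$ carries no $\vd\qv{X}$-mass, as $\int_0^t\indicB{X_s=0}\vd\qv{X}_s=\int_{\{0\}}\loct{X}{y}{t}\vd y=0$ by the spatial formula, and the single level $y=0$ is $\vd y$-null, this ambiguity affects neither side.
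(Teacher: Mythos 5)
Your proof is correct and follows essentially the same route as the paper, which simply applies the time--space occupation times formula of \cite[Exercise~VI.1.15]{RevYor} to the measurable function $f(t,x)=\sum_{e\in E}\indicB{I(t)=e}f_e(t,x)$, using the semimartingale property of $X$ from Corollary~\ref{cor_DDS_rescaled}. Your additional care in justifying the application to the random integrand $\indicB{I(s)=e}$ (via a common null set and an $\omega$-by-$\omega$ application) and in handling the ambiguity of $I(s)$ at the vertex is a welcome refinement of a step the paper leaves implicit, not a different argument.
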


\begin{proof}
	Since $X$ is a semimartingale, we can apply \cite[Exercise~VI.1.15]{RevYor} 
	on the measurable function 
	\begin{equation}
		\begin{aligned}
			f(t,x) &:= \sum_{e\in E}\indicB{I(t) = e} f_e(t,x),
			& t&\ge 0,
			& x&\ge 0.
		\end{aligned}
	\end{equation}
	This completes the proof. 
\end{proof}

%% New Section
\section{On the local time $\mX $}
\label{sec_further}

In this section, we define and characterize the local time of a regular (NSE) diffusions \(\mX = (I, X)\) on \(\Gamma\). 
Specifically, we express the local time of \(\mX\) in terms of the local time of its distance-to-origin processes. 
Additionally, we derive an It\^o--Tanaka formula for such diffusions.

To establish these results, we first prove them for Walsh Brownian motion and then extend them to \(\mX\), using the time-change characterization provided in Theorem~\ref{thm_DDS_Walsh}. These results play a key role in the proof of Theorem~\ref{thm_main}.

We first define the local time of a diffusion $\mX=(I,X) $ on $\Gamma $.
Regarding $X$, since it is a semimartingale (Corollary~\ref{cor_DDS_rescaled}), from \cite[Corollary~VI.1.9]{RevYor}, its local time is the random field defined as the almost sure limit
\begin{equation}
	\begin{aligned}
		\loct{X}{y+}{t} &:= \lim_{\delta \to 0} \frac{1}{\delta}
		\int_{0}^{t} \indicB{X_s \in [y, y+\delta)} \vd \qv{X}_s, 
		& t &\ge 0,
		& y &\in \IR. 
	\end{aligned}
\end{equation}
We define the \textit{symmetric local time} $(\loct{\mX}{\bv}{t};\, t \ge 0)$ 
and \textit{the directional local times} $(\loct{\mX}{e,\bv}{t};\, t \ge 0,\, e \in E)$ of $\mW$ at the junction vertex $\bv$ as the random fields defined as the almost sure limits
\begin{align}
	\label{eq_loctime_0_defX}
	\loct{\mX}{\bv}{t} &:= \lim_{\delta \to 0} \frac{1}{\delta}
	\int_{0}^{t} \indicB{X_s \in [0, \delta)} \vd \qv{X}_s,
	\quad \forall\, t \ge 0,
	\\
	\loct{\mX}{e,\bv}{t} &:= \lim_{\delta \to 0} \frac{1}{\delta}
	\int_{0}^{t} \indicB{X_s \in [0, \delta);\, I(s) = e} \vd \qv{X}_s,
	\quad \forall\, t \ge 0, \quad \forall\, e\in E.
	\label{eq_txt_def_loctX}
\end{align}
We also define the local time of $\mX$ at a point $\bx = (e, x) \in \Gamma$, with $x > 0$, as the almost sure limit
\begin{equation}
	\label{eq_txt_def_loct2X}
	\loct{\mW}{\bx}{t} := \lim_{\delta \to 0} \frac{1}{\delta}
	\int_{0}^{t} \indicB{X_s \in [x, x+\delta);\, I(s) = e} \vd \qv{X}_s,
	\quad \forall\, t \ge 0.
\end{equation}

We observe that
\begin{equation}
	\label{eq_rel_loctimes}
	\loct{X}{0+}{t} = 
	\loct{\mX}{\bv}{t} = \sum_{e \in E} \loct{\mX}{e,\bv}{t},
\end{equation}
where $\loct{R}{0+}{t}$ denotes the right-sided local time of $R$ at $0$.

\begin{remark}
	It is worth noting that the directional local times of $\mX $ at $\bv $ plays the role of (right/left) local time for one-dimensional processes.
	Similarly, the symmetric local time of $\mX $ at $\bv $ plays the role of symmetric
	local time for one-dimensional processes.
\end{remark}

In the case of a Walsh Brownian motion $\mW := (J, R)$ on  $\Gamma^{\ext} $, things get simpler.  Indeed, from L\'evy's presentation of the reflected Brownian motion $\qv{R}_t = t $, for $t\ge 0 $ and hence, the local time of $R$ at $y \ge 0$ is defined as the almost sure limit
\begin{equation}
	\begin{aligned}
		\loct{R}{y+}{t} &:= \lim_{\delta \to 0} \frac{1}{\delta}
		\int_{0}^{t} \indicB{R_s \in [y, y+\delta)} \vd s, 
		& t &\ge 0.
	\end{aligned}
\end{equation}
In this case, for all $t\ge 0 $, $e\in E $ and $\bx =(e,x) $, with $x>0 $,
we have
\begin{align}
	\label{eq_loctime_0_def}
	\loct{\mW}{\bv}{t} &:= \lim_{\delta \to 0} \frac{1}{\delta}
	\int_{0}^{t} \indicB{R_s \in [0, \delta)} \vd s,
	\\
	\loct{\mW}{e,\bv}{t} &:= \lim_{\delta \to 0} \frac{1}{\delta}
	\int_{0}^{t} \indicB{R_s \in [0, \delta);\, J(s) = e} \vd s,
	\label{eq_txt_def_loct}
	\\
	\loct{\mW}{\bx}{t} &:= \lim_{\delta \to 0} \frac{1}{\delta}
	\int_{0}^{t} \indicB{R_s \in [x, x+\delta);\, J(s) = e} \vd s.
	\label{eq_txt_def_loct2}
\end{align}

We now prove several additional properties of the local times. First, we express the directional local times of $\mW$ in terms of the symmetric local time. Second, we express the local time of $\mW$ at $\bx = (e, x)$ with $x > 0$ in terms of the local time of $R$ at $x$. Finally, we prove an It\^o--Tanaka formula for $\mW$.

The main tool for our proofs is a technique introduced in~\cite{salminen2024occupation}, which involves isolating a single edge of $\Gamma^{\ext}$ and collapsing all remaining edges into a single edge $\wh e$. We define the process
\begin{equation}
	\label{eq_txt_def_Yprocess}
	Z_t := 
	\begin{cases}
		R_t, & J(t) = e, \\
		-R_t, & J(t) \ne e,
	\end{cases}
	\quad \forall\, t \ge 0.
\end{equation}
The process $Z$ can be shown to be a skew Brownian motion with bias parameter $\beta_e$ (see Corollary~\ref{cor_edge_transition_NSE} and the construction of the skew Brownian motion from~\cite[Problem~1, pp. 115--116]{ItoMcKean96} and~\cite[Example~5.7]{salisbury1986construction}).

\begin{proposition}
	\label{prop_loctime_relationI}
	Let $\mX = (I,X) $ be a diffusion on $\Gamma $. 
	It holds that
	\begin{equation}
		\int_{0}^{t} \indicB{I(s)=e} \, \vd \loct{X}{y+}{t}
		= \loct{\mX}{(e,y)}{t},
	\end{equation}
	for all $e\in E $ and $y>0 $. 
\end{proposition}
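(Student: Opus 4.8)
The plan is to read the identity off the non-homogeneous occupation times formula (occupation times formula~I above) by a localization in the spatial variable followed by a limit. Throughout, recall from Corollary~\ref{cor_DDS_rescaled} that $X$ is a semimartingale, so that all the local times in play are well defined, and that for $y>0$ one has $\{X_s=y\}\subset\{X_s>0\}$, where the edge process $I$ is locally constant (the process cannot switch edges without passing through the vertex $\bv=\{X=0\}$). This last feature, available only because $y>0$, is what will let us handle the edge indicator $\indicB{I(s)=e}$.

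First I would apply occupation times formula~I to the function $f_{e'}(s,x):=\frac{1}{\delta}\,\indicB{e'=e}\,\indicB{x\in[y,y+\delta)}$, for which $f_{I(s)}(s,X_s)=\frac{1}{\delta}\,\indicB{I(s)=e}\,\indicB{X_s\in[y,y+\delta)}$. Only the term $e'=e$ survives the sum over edges, so for every $\delta>0$,
\begin{equation}
	\frac{1}{\delta}\int_{0}^{t}\indicB{I(s)=e}\,\indicB{X_s\in[y,y+\delta)}\vd\qv{X}_s
	=\frac{1}{\delta}\int_{y}^{y+\delta}\Phi(a)\vd a,
	\qquad \Phi(a):=\int_{0}^{t}\indicB{I(s)=e}\vd\loct{X}{a+}{s},
\end{equation}
where $\loct{X}{a+}{\cdot}$ is the right local time. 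Letting $\delta\to0$, the left-hand side tends to $\loct{\mX}{(e,y)}{t}$ by the defining limit~\eqref{eq_txt_def_loct2X}, so it remains to show that the right-hand side converges to $\Phi(y)=\int_{0}^{t}\indicB{I(s)=e}\vd\loct{X}{y+}{s}$; since the right-hand side is the Ces\`aro-type average $\frac{1}{\delta}\int_{y}^{y+\delta}\Phi(a)\vd a$, this amounts to proving that $a\mapsto\Phi(a)$ is right-continuous at $y$.

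For this last step, the crux of the argument, I would fix $0<\varepsilon<y$ and decompose the time axis into the countably many excursion intervals $(g,d)$ of $X$ above the level $y-\varepsilon$. On each such interval $X_s>y-\varepsilon>0$, hence $I$ is constant there, say equal to $e(g,d)$; and since for $a\ge y$ the measure $\vd\loct{X}{a+}{\cdot}$ charges only $\{X=a\}\subset\{X>y-\varepsilon\}$, one obtains $\Phi(a)=\sum_{(g,d):\,e(g,d)=e}\big(\loct{X}{a+}{d\wedge t}-\loct{X}{a+}{g\wedge t}\big)$. This is a series of nonnegative terms, each right-continuous in $a$, whose total over \emph{all} excursions equals the finite, right-continuous map $a\mapsto\loct{X}{a+}{t}$. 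Applying Fatou's lemma to this sum and to its complement (the excursions with $e(g,d)\ne e$), and using that the total is right-continuous, forces $\liminf_{a\downarrow y}\Phi(a)\ge\Phi(y)$ and $\limsup_{a\downarrow y}\Phi(a)\le\Phi(y)$; hence $\Phi(y+)=\Phi(y)$, which closes the proof.

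The main obstacle is precisely this right-continuity of the edge-restricted local time $\Phi$ in the space variable: because $\indicB{I(s)=e}$ is discontinuous in $s$ near the vertex, one cannot simply inherit regularity from the local-time field $a\mapsto\loct{X}{a+}{t}$, and the excursion decomposition together with the constraint $y>0$ is genuinely needed to control the partial sum.
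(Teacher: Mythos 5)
Your proof is correct, but it follows a genuinely different route from the paper's. The paper works directly at the fixed level $y$: it introduces the random intervals $[\sigma^{e,\delta}_k,\tau^{e,\delta}_k)$ on which the process sits on edge $e$ away from $\bv$, decomposes the Stieltjes integral $\int_0^t \indicB{I(s)=e}\vd\loct{X}{y+}{s}$ over these intervals (using that $\vd\loct{X}{y+}{\cdot}$ is carried by $\{X=y\}\subset\{X>\delta\}$), and then re-assembles the edge-restricted occupation limit by applying the a.s. density characterization of $\loct{X}{y+}{\cdot}$ at the random endpoints and interchanging the sum over $k$ with the limit $h\to 0$. You instead average over spatial levels via occupation times formula~I (which is legitimately available, being proved in Section~4 independently of this proposition) and reduce everything to the right-continuity at $y$ of $a\mapsto\Phi(a)=\int_0^t\indicB{I(s)=e}\vd\loct{X}{a+}{s}$, which you establish with an excursion decomposition above the level $y-\varepsilon$ and a Fatou sandwich against the right-continuous total $a\mapsto\loct{X}{a+}{t}$. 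Both arguments hinge on the same structural fact — the edge process is constant on excursions above a positive level, so the edge indicator is effectively a union of intervals on the support of the relevant local-time measures — but your version has the merit of making fully explicit the interchange-of-limits step that the paper's proof passes over quickly (its $\sum_k\lim_h=\lim_h\sum_k$), at the cost of invoking the joint regularity of the local-time field ($a\mapsto\loct{X}{a}{s}$ càdlàg, e.g.\ \cite[Theorem~VI.1.7]{RevYor}), which you should cite where you assert that each excursion contribution and the total are right-continuous in $a$. As a byproduct, both proofs also establish the existence of the a.s.\ limit defining $\loct{\mX}{(e,y)}{t}$ in~\eqref{eq_txt_def_loct2X}, which is implicitly part of the claim.
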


\begin{proof}
	Let $\delta < y $ and $(\tau^{e,\delta}_k,\, \sigma^{e,\delta}_k;\; k \ge 0) $
	be the sequences of stopping times defined recursively by $\sigma^{e,\delta}_0 := 0$, and
	\begin{equation}
		\label{eq_proof_tausigma_stopping_times_def2}
		\begin{aligned}
			\tau^{e,\delta}_k &:= \inf\{t>\sigma^{\delta}_{k}:\; X_t=0\},
			& \forall\, k &\in \IN_0,\\ 
			\sigma^{e,\delta}_k &:= \inf\{t>\tau^{\delta}_{k-1}:\; I(t)=e,\; X_t=\delta\},
			& \forall\, k &\in \IN.
		\end{aligned}
	\end{equation} 
	From~\cite[Corollary~VI.1.9]{RevYor} for random and non-random times, 
	\begin{align}
		\int_{0}^{t} \indicB{J(s)=e} \, \vd \loct{X}{y+}{t}
		&= \sum_{k\in \IN_0} \int_{\sigma^{e,\delta}_k}^{\tau^{e,\delta}_k} \, \vd \loct{X}{y+}{t} 
		= \sum_{k\in \IN_0} \braces{\loct{X}{y+}{\tau^{e,\delta}_k} - \loct{X}{y+}{\sigma^{e,\delta}_k}}
		\\ &= \sum_{k\in \IN_0} \lim_{h \to 0}
		\frac{1}{h} \int_{\sigma^{e,\delta}_k}^{\tau^{e,\delta}_k}
		\indicB{X_s \in [y,y+h)} \vd \qv{X}_s 
		\\ &= \lim_{h \to 0}
		\frac{1}{h} \int_{0}^{t}
		\indicB{X_s \in [y,y+h);\; J(s)=e} \vd \qv{X}_s
		= \loct{\mX}{(e,y)}{t}.
	\end{align}
	This completes the proof. 
\end{proof}

\begin{proposition}
	\label{prop_local_times}
	Let $\mX $ be a NSE diffusion defined by~\ref{item_A1}--\ref{item_A2}.
	Then $\loct{\mX}{e,\bv}{t} = \beta_e \loct{X}{0+}{t} $, for all $t\ge 0 $ and $e\in E $.
\end{proposition}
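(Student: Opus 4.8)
The plan is to establish the identity first for the Walsh Brownian motion $\mW = (J,R)$ and then transport it to the general NSE diffusion $\mX$ via the time-change of Theorem~\ref{thm_DDS_Walsh}. For the Walsh case, I would exploit the skew Brownian motion $Z$ introduced in~\eqref{eq_txt_def_Yprocess}, which collapses every edge other than $e$ onto the negative half-line and which, by Corollary~\ref{cor_edge_transition_NSE} together with the construction of~\cite{ItoMcKean96,salisbury1986construction}, is a skew Brownian motion of parameter $\beta_e$. Since $\{R_s\in[0,\delta),\,J(s)=e\}=\{Z_s\in[0,\delta)\}$ and $\{R_s\in[0,\delta)\}=\{Z_s\in(-\delta,\delta)\}$, the definitions~\eqref{eq_txt_def_loct} and~\eqref{eq_loctime_0_def} together with the occupation density formula applied separately to $(-\delta,0)$ and $[0,\delta)$ give
\begin{equation}
	\loct{\mW}{e,\bv}{t} = \loct{Z}{0+}{t}
	\quad\text{and}\quad
	\loct{R}{0+}{t} = \loct{Z}{0+}{t} + \loct{Z}{0-}{t}.
\end{equation}

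It then remains to invoke the standard one-sided local time relations for a skew Brownian motion of parameter $\beta_e$, namely $\loct{Z}{0+}{t} = 2\beta_e\loct{Z}{0}{t}$ and $\loct{Z}{0-}{t} = 2(1-\beta_e)\loct{Z}{0}{t}$, where $\loct{Z}{0}{t}$ denotes the symmetric local time. These follow from the defining equation $\vd Z_t = \vd B_t + (2\beta_e-1)\vd\loct{Z}{0}{t}$ combined with the balance $\loct{Z}{0+}{t} - \loct{Z}{0-}{t} = 2\int_{0}^{t}\indicB{Z_s=0}\vd Z_s = 2(2\beta_e-1)\loct{Z}{0}{t}$ and the identity $\loct{Z}{0+}{t}+\loct{Z}{0-}{t}=2\loct{Z}{0}{t}$. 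Combining the displays yields $\loct{\mW}{e,\bv}{t} = 2\beta_e\loct{Z}{0}{t} = \beta_e(\loct{Z}{0+}{t}+\loct{Z}{0-}{t}) = \beta_e\loct{R}{0+}{t}$, the claim for $\mW$; as a consistency check, summing over $e$ recovers~\eqref{eq_rel_loctimes} because $\sum_{e\in E}\beta_e=1$.

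To pass to the NSE diffusion $\mX = (I,X)$, I would use Theorem~\ref{thm_DDS_Walsh} to write $X_t = R_{\qv{X}_t}$ and $I(t) = J(\qv{X}_t)$, where $\qv{X}$ is strictly increasing and continuous by Proposition~\ref{prop_strict_monotony}, so that $\qv{X}$ and its right-inverse $\varphi$ are proper inverses. Substituting $u = \qv{X}_s$ in the defining integral~\eqref{eq_txt_def_loctX} converts $\vd\qv{X}_s$ into $\vd u$ and the integrand $\indicB{X_s\in[0,\delta),\,I(s)=e}$ into $\indicB{R_u\in[0,\delta),\,J(u)=e}$, giving $\loct{\mX}{e,\bv}{t} = \loct{\mW}{e,\bv}{\qv{X}_t}$; similarly, Lemma~\ref{def_TC_local_time} gives $\loct{X}{0+}{t} = \loct{R}{0+}{\qv{X}_t}$. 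Feeding these two identities into the Walsh case evaluated at the random time $\qv{X}_t$ produces $\loct{\mX}{e,\bv}{t} = \beta_e\loct{X}{0+}{t}$, as required.

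The main obstacle I anticipate is the Walsh step: one must confirm carefully that the process $Z$ of~\eqref{eq_txt_def_Yprocess} is genuinely a skew Brownian motion of parameter $\beta_e$, and then track the one-sided local times with the correct conventions so that the factors of $2$ and the asymmetry between $\beta_e$ and $1-\beta_e$ come out consistently. By contrast, the time-change reduction is comparatively routine once Proposition~\ref{prop_strict_monotony} guarantees that $\qv{X}$ is a strict, continuous inverse, which legitimizes both the change of variables in the defining integral and the application of Lemma~\ref{def_TC_local_time}.
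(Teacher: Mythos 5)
Your proof is correct and follows essentially the same route as the paper's: collapse the edges other than $e$ to obtain the skew Brownian motion $Z$ of parameter $\beta_e$, identify $\loct{\mW}{e,\bv}{}$ and $\loct{R}{0+}{}$ with one-sided local times of $Z$, and transfer the resulting identity to $\mX$ through the time change of Theorem~\ref{thm_DDS_Walsh} and Lemma~\ref{def_TC_local_time}. The only difference is cosmetic: the paper quotes $\loct{Z}{0+}{t}=\beta_e\loct{Z}{0}{t}$ from~\cite{borodin2019local} with its symmetric local time normalized as $\loct{Z}{0}{}=\loct{Z}{0+}{}+\loct{Z}{0-}{}$, whereas you re-derive the one-sided relations from the Harrison--Shepp equation under the convention $\loct{Z}{0}{}=\tfrac12\bigl(\loct{Z}{0+}{}+\loct{Z}{0-}{}\bigr)$, which accounts for your extra factor of $2$ and yields the same conclusion.
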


\begin{proof}
	We first prove the result for a Walsh Brownian motion $\mW=(J,R) $ of bias parameters
	$(\beta_e;\, e\in E) $.
	We consider the process $Z$ defined in~\eqref{eq_txt_def_Yprocess}.
	From the path-space characterization  of $Y$, we have that $\qv{Z}_t =t $, for all $t\ge 0 $.  
	Hence, its symmetric local time at $0$ is 
	\begin{equation}
		\loct{Z}{0}{t} := \lim_{\delta \to 0} \frac{1}{\delta}
		\int_{0}^{t} \indicB{|Z_s| < \delta} \vd s, \quad t\ge 0,
	\end{equation}
	and its right local time at $0$ is
	\begin{equation}
		\loct{Z}{0+}{t} := \lim_{\delta \to 0} \frac{1}{\delta}
		\int_{0}^{t} \indicB{0 \le Z_s < \delta} \vd s, \quad t\ge 0.
	\end{equation}
	By definition of $Z$, we observe that $\loct{Z}{0}{t} = \loct{\mW}{\bv}{t}
	= \loct{R}{0+}{t}$ and 
	$\loct{Z}{0+}{t} = \loct{\mW}{e,\bv}{t}$.
	From \cite[Theorem~2.1]{borodin2019local}, it holds that 
	$\loct{Z}{0+}{t} = \beta \loct{Z}{0}{t} $, which leads to  
	\begin{equation}
		\label{eq_proof_dir_symm_WalshBM}
		\begin{aligned}
			\loct{\mW}{e,\bv}{t} & = \beta_e \loct{R}{0+}{t},
			& \forall\, t&\ge 0,
			& \forall\, e&\in E.
		\end{aligned}
	\end{equation}
	
	Let's now assume $\mX $ is NSE. 
	From Theorem~\ref{thm_DDS_Walsh}, there exists a Walsh Brownian motion $\mW=(J,R) $ 
	of bias parameters $(\beta_e;\, e\in E) $, adapted to $\process{\bF_{\varphi(t)}} $ defined on an extension of the probability space such that 
	\begin{equation}
		\begin{aligned}
			\mX_t &= \mW_{\qv{X}_t} 
			& \mW_t &= \mX_{\varphi(t)}
			&\forall\, t&\ge 0.
		\end{aligned}
	\end{equation}
	From Lemma~\ref{def_TC_local_time},
	$\loct{X}{0}{t} = \loct{R}{0}{\qv{X}_t} $, for all $t\ge 0 $.
	For the directional local times, from Lemma~\ref{def_TC_stochastic_integral}, 
	\begin{align}
		\loct{\mX}{\bv,e}{t}
		&= \lim_{\delta \to 0} \frac{1}{\delta}
		\int_{0}^{t} \indicB{X_s \in [0, \delta);\, I(s) = e} \vd \qv{X}_s
		\\ &= \lim_{\delta \to 0} \frac{1}{\delta}
		\int_{0}^{t} \indicB{R_{\qv{X}_s} \in [0, \delta);\, J(\qv{X}_s) = e}  \vd \qv{X}_s
		\\ &= \lim_{\delta \to 0} \frac{1}{\delta}
		\int_{0}^{\qv{X}_t} \indicB{R_{s} \in [0, \delta);\, J(s)=e}  \vd s
		= \loct{\mW}{\bv,e}{\qv{X}_t}. 
	\end{align}
	Combining the above with the relation~\eqref{eq_proof_dir_symm_WalshBM}
	for Walsh Brownian motion, we have that
	\begin{equation}
		\begin{aligned}
			\loct{\mX}{\bv,e}{t} &= \beta_e \loct{X}{0}{t},
			& \forall\,& t\ge 0,
			& \forall\, e&\in E.
		\end{aligned}
	\end{equation}
	This is the desired relation.
\end{proof}

\begin{proposition}[It\^o--Tanaka Formula for $\mW$]
	\label{prop_ItoTanaka2}
	Let $\mW = (J, R)$ be a Walsh Brownian motion of bias parameters $(\beta_e;\, e \in E)$. 
	For any function $f := \bigoplus_{e \in E} f_e$ on $\Gamma^{\ext}$, where $(f_e;\, e \in E)$ is a family of differences of convex functions, the following holds for all $t \ge 0$:
	\begin{align}
		f(J(t), R_t)
		&= f(J(0), R_0)
		+ \sum_{e \in E} \int_{0}^{t} \indicB{J(s) = e} (f_e)'_- (R_s) \vd R_s
		\\
		&\quad + \frac{1}{2} \sum_{e \in E} \int_{(0, \infty)} \int_{0}^{t} \indicB{J(s) = e} \vd \loct{R}{y+}{s} \, f_{e}''(\rd y)
		+ \frac{1}{2} \sum_{e \in E} \beta_e \loct{R}{0+}{t} \, f_{e}''(\{0\}),
	\end{align}
	where $(\loct{R}{y+}{t};\, t \ge 0, y \ge 0)$ is the local time field of $R$.
\end{proposition}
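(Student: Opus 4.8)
The plan is to reduce the whole identity to the classical It\^o--Tanaka formula for the reflected Brownian motion $R$ (\cite[Theorem~VI.1.5]{RevYor}), applied edge by edge, and to let the bias weights $\beta_e$ enter only through the vertex term, via Proposition~\ref{prop_local_times}. Since $R$ is a reflected Brownian motion it is a continuous semimartingale with $\qv{R}_t=t$, and each $f_e$, being a difference of convex functions on $[0,\infty)$, has a locally bounded left-derivative $(f_e)'_-$ and a signed second-derivative measure $f_e''$ on $[0,\infty)$, whose atom at the origin is $f_e''(\{0\})$. For a fixed edge $e$ the It\^o--Tanaka formula reads, in differential form,
\[
\vd f_e(R_s) = (f_e)'_-(R_s)\,\vd R_s + \frac12\int_{[0,\infty)}\vd\loct{R}{y+}{s}\,f_e''(\rd y).
\]
The whole point is to integrate this against the edge indicator $\indicB{J(s)=e}$ and sum over $e\in E$, so that the left-hand side reassembles the continuous process $f(J(\cdot),R_{\cdot})=\sum_{e\in E}\indicB{J(\cdot)=e}f_e(R_{\cdot})$.

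To make this localization rigorous I would reuse the excursion stopping times $(\sigma^{e,\delta}_k,\tau^{e,\delta}_k;\,k\ge0)$ from the proof of Proposition~\ref{prop_loctime_relationI}, which bracket the time intervals the process spends on edge $e$ after reaching distance $\delta$ from $\bv$ and until its next visit to $\bv$. On each such interval $J\equiv e$, so applying the It\^o--Tanaka formula to $f_e(R)$ between $\sigma^{e,\delta}_k$ and $\tau^{e,\delta}_k$, summing over $k$, and letting $\delta\to0$ produces
\[
\sum_{e\in E}\int_0^t\indicB{J(s)=e}(f_e)'_-(R_s)\,\vd R_s + \frac12\sum_{e\in E}\int_{[0,\infty)}\int_0^t\indicB{J(s)=e}\,\vd\loct{R}{y+}{s}\,f_e''(\rd y).
\]
The corresponding sum of boundary increments $f_e(R_{\tau^{e,\delta}_k})-f_e(R_{\sigma^{e,\delta}_k})$ must telescope, across all edges, to $f(J(t),R_t)-f(J(0),R_0)$: this is exactly where continuity of $f$ across $\bv$ is used, since at every edge transition $R=0$ and all the $f_e$ share the common value $f(\bv)$, so no spurious jump survives and the contributions of the shrinking near-vertex gaps vanish in the limit.

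Finally, splitting the second-derivative measure into its restriction to $(0,\infty)$ and its atom at the origin turns the last double integral into the third term of the statement plus
\[
\frac12\sum_{e\in E}f_e''(\{0\})\int_0^t\indicB{J(s)=e}\,\vd\loct{R}{0+}{s} = \frac12\sum_{e\in E}f_e''(\{0\})\,\loct{\mW}{e,\bv}{t},
\]
in which I have recognised the directional local time~\eqref{eq_txt_def_loct}. Proposition~\ref{prop_local_times} then replaces $\loct{\mW}{e,\bv}{t}$ by $\beta_e\loct{R}{0+}{t}$, yielding the vertex term and closing the identity; this is the only place the bias parameters appear, and it is where the skew-Brownian embedding $Z$ of~\eqref{eq_txt_def_Yprocess} enters, since that embedding underlies Proposition~\ref{prop_local_times}. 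The hard part will be the second step: controlling the $\delta\to0$ limit of the excursion decomposition and checking that the boundary increments telescope cleanly across edges with no residual contribution at the vertex, while carrying the atom-at-zero part of each $f_e''$ through the limit, as it is precisely what survives as the local-time term at $\bv$.
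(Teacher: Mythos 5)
Your strategy is genuinely different from the paper's: you work directly with the reflected Brownian motion $R$ and an excursion decomposition, whereas the paper folds each edge $e$ into a skew Brownian motion $Z$ via~\eqref{eq_txt_def_Yprocess}, applies the classical It\^o--Tanaka formula to $\wt f_e(Z)$ \emph{globally} in time, and sums over edges after subtracting the $(\card E-1)f(\bv)$ overcount. The folding trick is precisely what lets the atom of $f_e''$ at $0$ be picked up painlessly, through the one-sided local time of $Z$ at $0$ (Proposition~\ref{prop_local_times}). Your route can be made to work, but as written it resolves the key limit incorrectly, and the error sits exactly at the vertex term. Two of your claims fail simultaneously. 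First, the excursion integrals can never produce the atom of $f_e''$ at $0$: on each interval $[\sigma^{e,\delta}_k,\tau^{e,\delta}_k]$ one has $R>0$ except at the right endpoint, so $\loct{R}{0+}{\cdot}$ is constant there and the atom contributes nothing for any $\delta$; only the restriction of $f_e''$ to $(0,\infty)$ survives the limit (that is the content of Proposition~\ref{prop_loctime_relationI}). Moreover the quantity $\int_0^t\indicB{J(s)=e}\vd\loct{R}{0+}{s}$ you then "recognise'' as the directional local time is not the object defined in~\eqref{eq_txt_def_loct}: the measure $\vd\loct{R}{0+}{s}$ is carried by $\{s:R_s=0\}$, where $J(s)$ has no canonical value, while $\loct{\mW}{e,\bv}{t}$ is a one-sided limit taken from inside edge $e$. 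Second, the boundary increments do \emph{not} telescope with vanishing remainder: each completed edge-$e$ excursion contributes $f_e(0)-f_e(\delta)$, and the number of such excursions reaching level $\delta$ before time $t$ is of order $\beta_e\loct{R}{0+}{t}/(2\delta)$ by the upcrossing representation of local time, so the near-vertex gaps contribute, in the limit,
\begin{equation}
	\frac12\sum_{e\in E}\beta_e\,(f_e)'_+(0+)\,\loct{R}{0+}{t},
\end{equation}
which (since $f_e''(\{0\})=(f_e)'_+(0+)$ under the constant-extension convention that also makes $(f_e)'_-(0)=0$ in the stochastic integral) is exactly the vertex term of the statement. In other words, the term you try to extract from the double integral actually comes from the gaps you discarded.

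A concrete check: take two edges, $\beta_1=\beta_2=\tfrac12$, $f_1(x)=x$, $f_2\equiv0$, so that $f(\mW_t)=B_t^+$ for the underlying Brownian motion $B$. Here $f_1''$ vanishes on $(0,\infty)$ and $f_1''(\{0\})=1$, and each excursion term in your sum is just $\int_{\sigma_k}^{\tau_k}\vd R_s$ with no local-time contribution at all; your reading of the limit would give $B_t^+-B_0^+=\int_0^t\indicB{B_s>0}\vd B_s$, which is false by Tanaka's formula --- the missing $\tfrac12\loct{B}{0}{t}$ is the gap contribution. To repair the argument you must either carry out the upcrossing count for the gap increments explicitly, or switch to the paper's skew-Brownian folding, which avoids the excursion limit altogether.
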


\begin{proof}
	For each $e\in E $, let $g^{(e)} $ be the function defined on $\Gamma^{\ext} $
	as
	\begin{equation}
		g^{(e)}(e',x) :=
		\begin{cases}
			f(e,x), & e=e',\; x\ge 0,\\
			f(e,0), & \text{otherwise}.
		\end{cases}
	\end{equation}
	Let also $\wt f_e $ be the extension of $f_e $ on $\IR $, defined as
	\begin{equation}
		\wt f_e(x) := 
		\begin{cases}
			f_e(x), & x\ge 0,  \\
			f_e(0), & x< 0.
		\end{cases}
	\end{equation}
	Hence, for these functions we have that $g^{(e)}(\mW_t) = \wt f_e(Z_t) $, for all $t\ge 0 $. 
	From the It\^o--Tanaka formula (\cite[Theorem~VI.1.5]{RevYor}), we have that 
	\begin{align}
		\vd \wt f_e(Z_t) 
		&=  (\wt f_e)'_- (Z_t) \vd Z_t
		+ \frac{1}{2} \int_{\IR} \vd \loct{Z}{y+}{t} \,\wt f''_e (\rd y)
		\\
		&= (\wt f_e)'_- (Z_t) \vd Z_t
		+ \frac{1}{2} \int_{(0,\infty)} \vd \loct{Z}{y+}{t} \,\wt f''_e (\rd y)
		+ \frac{1}{2} \loct{Z}{0+}{t} f''_e (\{0\}).
	\end{align}
	Observing that $\loct{Z}{y+}{t} = \loct{\mW}{e,y+}{t} $, and from Proposition~\ref{prop_loctime_relationI},
	\begin{equation}
		\vd \wt f_e(Z_t) = (\wt f_e)'_- (Z_t) \vd Z_t
		+ \frac{1}{2} \int_{(0,\infty)} \vd \loct{\mW}{e,y+}{t} \,\wt f''_e (\rd y)
		+ \frac{1}{2} \beta_e \loct{R}{0+}{t} f''_e (\{0\}).
	\end{equation}
	Multiplying by $\indicB{J(t)=e} $ and integrating in time yields that
	\begin{align}
		g^{(e)}(J(t),R_t)
		={}& g^{(e)}(J(0),R_0)
		+ \int_{0}^{t} \indicB{J(s)=e} (g^{(e)})'_- (e,R_s) \vd R_s
		\\ &+ \frac{1}{2} \int_{(0,\infty)} \indicB{J(s)=e} \vd \loct{\mW}{e,y}{t} \,(g^{(e)}_e)'' (e,\rd y) 
		\\
		&+ \frac{1}{2} \beta_e \loct{R}{0+}{t} (g^{(e)}_e)'' (\{0\}).
	\end{align}
	Summing over all edges, 
	\begin{align}
		\sum_{e\in E} g^{(e)}(J(t),R_t)
		={}& \sum_{e\in E} g^{(e)}(J(0),R_0)
		+ \sum_{e\in E} \int_{0}^{t} \indicB{J(s)=e} (g^{(e)})'_- (e,R_s) \vd R_s
		\\ &+ \frac{1}{2} \sum_{e\in E} \int_{(0,\infty)} \int_{0}^{t}  \indicB{J(s)=e} \vd \loct{\mW}{e,y}{t} \,(g^{(e)}_e)'' (e,\rd y) 
		\\
		&+ \frac{1}{2} \sum_{e\in E} \beta_e \loct{R}{0+}{t} (g^{(e)}_e)'' (\{0\}).
	\end{align}
	From the definition of $g^{(e)} $,
	\begin{align}
		\sum_{e\in E} g^{(e)}(J(t),R_t) &= f(J(t),R_t) + \braces{\card E - 1} f(e,0),
		\\
		\sum_{e\in E} g^{(e)}(J(0),R_0) &= f(J(0),R_0) + \braces{\card E - 1} f(e,0). 
	\end{align}
	Consequently, substracting the adequate quantity yields 
	\begin{align}
		f(J(t),R_t)
		={}& f(J(0),R_0)
		+ \sum_{e\in E} \int_{0}^{t} \indicB{J(s)=e} (g^{(e)})'_- (e,R_s) \vd R_s
		\\ &+ \frac{1}{2} \sum_{e \in E} \int_{(0,\infty)} \int_{0}^{t}  \indicB{J(s)=e} \vd \loct{\mW}{e,y}{s} \, (g^{(e)})'' (e,\rd y)
		\\
		&+ \frac{1}{2} \sum_{e\in E} \beta_e \loct{R}{0+}{t} (g^{(e)}_e)'' (\{0\}).
	\end{align}
	From Propositions~\ref{prop_loctime_relationI},~\eqref{eq_rel_loctimes} and the definition of $g^{(e)} $, the proof is finished. 
\end{proof}

\begin{proposition}[It\^o--Tanaka Formula for $\mX$ NSE]
	\label{prop_ItoTanaka_NSE}
	Let $\mX := (I, X)$ be a NSE diffusion on $\Gamma$ defined via~\ref{item_A1}--\ref{item_A2}. 
	For any function $f := \bigoplus_{e \in E} f_e$ on $\Gamma$, where $(f_e;\, e \in E)$ is a family of differences of convex functions, the following holds for all $t \ge 0$:
	\begin{multline}
		f(I(t), X_t)
		= f(I(0), X_0)
		+ \sum_{e \in E} \int_{0}^{t} \indicB{I(s) = e} (f_e)'_- (X_s) \vd X_s
		\\
		 + \frac{1}{2} \sum_{e \in E} \int_{(0, \infty)} \int_{0}^{t} \indicB{I(s) = e} \vd \loct{X}{y+}{t} \, f_e''(\rd y)
		+ \frac{1}{2} \sum_{e \in E} \beta_e \loct{X}{0+}{t} \, f_e''(\{0\}),
		\label{eq_prop_general_ItoTanaka}
	\end{multline}
	where $(\loct{X}{y+}{t};\, t \ge 0, y \ge 0)$ is the local time field of $X$.
\end{proposition}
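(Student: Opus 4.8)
The plan is to transfer the It\^o--Tanaka formula for Walsh Brownian motion, Proposition~\ref{prop_ItoTanaka2}, which is already at our disposal, to the NSE diffusion $\mX$ via the time-change representation $\mX_t = \mW_{\qv{X}_t}$ furnished by Theorem~\ref{thm_DDS_Walsh}. Concretely, I would first apply Proposition~\ref{prop_ItoTanaka2} to the Walsh Brownian motion $\mW = (J,R)$ of Theorem~\ref{thm_DDS_Walsh} at the random time $u = \qv{X}_t$, producing an identity with four terms: a boundary term, an integral against $\vd R$, a double integral against the local-time measure $\vd\loct{R}{y+}{v}$, and a point-mass term at $0$. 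The whole argument then consists in translating each of these four terms into the corresponding $\mX$-quantity. The boundary terms are immediate, since $f(J(\qv{X}_t),R_{\qv{X}_t}) = f(I(t),X_t)$ and $\qv{X}_0 = 0$ give $f(J(0),R_0) = f(I(0),X_0)$.

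For the point-mass and double-integral terms I would invoke the local-time identities established earlier. By Lemma~\ref{def_TC_local_time} together with the computation in the proof of Proposition~\ref{prop_local_times}, $\loct{R}{0+}{\qv{X}_t} = \loct{X}{0+}{t}$, which settles the point-mass term directly. For the double integral, the same time-change argument as in Proposition~\ref{prop_local_times}, now run with the window $[y,y+\delta)$ in place of $[0,\delta)$, yields $\loct{\mW}{(e,y)}{\qv{X}_t} = \loct{\mX}{(e,y)}{t}$ for every $y>0$; combined with Proposition~\ref{prop_loctime_relationI}, which identifies $\loct{\mX}{(e,y)}{t}$ with $\int_0^t \indicB{I(s)=e}\,\vd\loct{X}{y+}{s}$, this rewrites the inner integral in the desired form, and Fubini against the signed measure $f_e''(\rd y)$ closes the term.

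The genuinely delicate term is the stochastic integral $\int_0^{\qv{X}_t}\indicB{J(v)=e}(f_e)'_-(R_v)\,\vd R_v$, and I expect it to be the main obstacle, because $R$ and $X$ are only semimartingales, so Lemma~\ref{def_TC_stochastic_integral}, stated for continuous local martingales, does not apply verbatim. My plan is to split along the L\'evy and Skorokhod decompositions from Lemma~\ref{thm_graph_DDS}, namely $R = R_0 + W + \frac12\loct{R}{0+}{}$ and $X = X_0 + Z + \frac12\loct{X}{0+}{}$ with $Z_t = W_{\qv{X}_t}$. On the martingale part I would apply Lemma~\ref{def_TC_stochastic_integral} with the time-change $\gamma = \qv{X}$; here Proposition~\ref{prop_strict_monotony} is essential, since the strict monotonicity and continuity of $\qv{X}$ make $W$ trivially $\qv{X}$-continuous and turn the time-change into a homeomorphism, giving $\int_0^t \indicB{I(s)=e}(f_e)'_-(X_s)\,\vd Z_s = \int_0^{\qv{X}_t}\indicB{J(v)=e}(f_e)'_-(R_v)\,\vd W_v$. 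On the local-time part, the integrator increases only on $\{R=0\}$, so $(f_e)'_-(R_v)$ may be replaced by $(f_e)'_-(0)$ and the residual integral $\int_0^{\qv{X}_t}\indicB{J(v)=e}\,\vd\loct{R}{0+}{v} = \loct{\mW}{e,\bv}{\qv{X}_t}$ matches $\loct{\mX}{e,\bv}{t}$ by the identity already used; the two local-time parts therefore coincide. Adding the martingale and local-time contributions yields $\int_0^{\qv{X}_t}\indicB{J(v)=e}(f_e)'_-(R_v)\,\vd R_v = \int_0^t \indicB{I(s)=e}(f_e)'_-(X_s)\,\vd X_s$, and summing the four translated terms over $e\in E$ produces~\eqref{eq_prop_general_ItoTanaka}. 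Throughout I would note that $(f_e)'_-$, as the left derivative of a difference of convex functions, is locally bounded and progressively measurable, so every integral above is well defined.
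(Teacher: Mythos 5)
Your proposal is correct and follows essentially the same route as the paper: apply Proposition~\ref{prop_ItoTanaka2} to $\mW$ at the random time $\qv{X}_t$ and translate each term back to $\mX$ via Lemmas~\ref{def_TC_stochastic_integral} and~\ref{def_TC_local_time}. Your additional step of splitting $R$ and $X$ into martingale and local-time parts before invoking Lemma~\ref{def_TC_stochastic_integral} is a welcome refinement, since the paper applies that lemma to the semimartingale stochastic integral without comment.
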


\begin{proof}
	Assume $\mX $ is defined on the probability space $\mc P_{\bx} := (\Omega,\process{\bF_t},\Prob_{\bx}) $.
	From Theorem~\ref{thm_DDS_Walsh}, there exists a $(\beta_e;\, e\in E) $-Walsh Brownian motion $\mW := (J,R) $, adapted to $\process{\bF_{\phi(t)}} $, defined on an extension of $\mc P_{\bx} $, such that 
	$\mX_t = \mW_{\qv{X}_t}$, for all $t\ge 0 $.
	The time change $\phi $ is the right-inverse of $\qv{X} $.
	
	From Proposition~\ref{prop_ItoTanaka2},
	\begin{align}
		f(J(\qv{X}_t),R_{\qv{X}_t})
		={}& f(I(0),X_0)
		+ \sum_{e\in E} \int_{0}^{\qv{X}_t} \indicB{J(s)=e} (f_e)'_- (R_s) \vd R_s
		\\ &+ \frac{1}{2} \sum_{e \in E} \int_{(0,\infty)} \int_{0}^{\qv{X}_t} \indicB{J(s)=e} \vd \loct{R}{y+}{t} \,f_e'' (\rd y)
		\\ &+ \frac{1}{2} \sum_{e \in E} 
		\beta_e \loct{R}{0+}{\qv{X}_t} \,f_e'' (\{0\}), \quad \forall\, t\ge 0. 
	\end{align} 
	From Lemmas~\ref{def_TC_stochastic_integral} and~\ref{def_TC_local_time}, we get~\eqref{eq_prop_general_ItoTanaka}.
	This completes the proof. 
\end{proof}

%% New Section
\section{Proofs of main results}
\label{sec_main_proof}

We now prove the main results of the paper (Theorems~\ref{thm_main} and \ref{thm_main2}). 
In Theorem~\ref{thm_DDS_Walsh}, we proved that a diffusion on $\Gamma$ can be expressed as a time-change of a Walsh Brownian motion on $\Gamma^{\ext}$, where the time-change is given by the quadratic variation of the distance-to-origin process $X$. To establish Theorem~\ref{thm_main}, we require an explicit representation of this time-change. This representation relies on the results of Section~\ref{sec_further} for Walsh Brownian motion and on a characterization of $H''_b$ (the function $H_b $ is introduced in  Notation~\ref{notation_exit_times}). 

First, we prove the characterization of $H''_b$. Next, we establish Theorem~\ref{thm_main} and use it to prove Theorem~\ref{thm_main2}.

\subsection{Characterization of $H''_b$}

Recall the definitions of $T_0$, $T_b$, and $H_b(\bx)$ for $b > 0$ and $\bx \in \Gamma$, as given in Notation~\ref{notation_exit_times}. For all $b > 0$, define $T_{0,b} := \min \{T_0, T_b\}$.

\begin{lemma}
	\label{lem_derivatives_EET}
	Let $\mX$ be a regular NSE diffusion on $\Gamma$, defined by~\ref{item_A1}--\ref{item_A2}. 
	Then, the following hold:
	\begin{enuroman}
		\item For all $ e \in E $ and $ x > 0 $,
		\begin{equation}
			- \frac{1}{2} H_{b}''(e, \rd x) ={} \m_{e}(\rd x) 
		\end{equation}
		\label{item_Hb_not0}
		\item The function $H_b $ admits a concave extension $\wt H_b $ such that 
		\begin{equation}
			- \sum_{e \in E} \beta_e \wt H''_{b}(e, \{0\}) = \rho.
		\end{equation}
		\label{item_Hb_at0}
	\end{enuroman} 
\end{lemma}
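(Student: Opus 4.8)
The plan is to derive both statements from the characterization of $H_b$ as the unique solution of a boundary value problem, exactly as $H_\delta$ was characterized in the proof of Proposition~\ref{prop_node_infinitesimal}\ref{item_node_sticky}. For $b\in(0,l_*)$, Dynkin's identity~\eqref{eq_proof_Dynkin} together with Proposition~\ref{prop_Dirichlet_solution} shows that $H_b$ is the unique solution of
\begin{equation*}
	\begin{cases}
		\tfrac12 \D_{\m_e}\D_{\s_e} H_b(e,x) = -1, & 0<x<b,\ e\in E,\\
		\rho\, \Lop_e H_b(e,0) = \sum_{e'\in E}\beta_{e'}H_b'(e',0+), & e\in E,\\
		H_b(e,0)=H_b(e',0),\quad H_b(e,b)=0, & e,e'\in E.
	\end{cases}
\end{equation*}
First I would record two immediate consequences: since the edge equation forces $\Lop_e H_b=\tfrac12\D_{\m_e}\D_{\s_e}H_b\equiv -1$ on each edge, continuity of $\Lop_e H_b$ up to the vertex gives $\Lop_e H_b(e,0)=-1$; and each $H_b(e,\cdot)$ is concave, its second-derivative measure being nonpositive by part~\ref{item_Hb_not0}.

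For part~\ref{item_Hb_not0} I would use that $\mX$ is NSE, so $\s_e(x)=x$ and $\D_{\s_e}=\D_x$. The edge equation then reads $\D_{\m_e}\big(\D_x H_b(e,\cdot)\big)=-2$, i.e. the increments of the right-derivative satisfy $H_b'(e,x_2)-H_b'(e,x_1)=-2\,\m_e((x_1,x_2])$. Hence the Stieltjes (second-derivative) measure of $H_b$ on each edge is $H_b''(e,\rd x)=-2\,\m_e(\rd x)$ on $(0,\infty)$, which is precisely $-\tfrac12 H_b''(e,\rd x)=\m_e(\rd x)$. This is a one-line computation once the boundary value problem is in hand.

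For part~\ref{item_Hb_at0} the key is the gluing relation. Substituting $\Lop_e H_b(e,0)=-1$ into the gluing condition~\eqref{eq_def_lateral} yields immediately
\[
	\sum_{e\in E}\beta_e H_b'(e,0+) = \rho\,\Lop_e H_b(e,0) = -\rho .
\]
It then remains to exhibit a concave extension $\wt H_b$ whose second-derivative measure places the vertex mass $\wt H_b''(e,\{0\})=H_b'(e,0+)$ along each edge $e$; this is the extension underlying the It\^o--Tanaka formula of Proposition~\ref{prop_ItoTanaka_NSE} (built, as in the proof of Proposition~\ref{prop_ItoTanaka2}, by extending $H_b(e,\cdot)$ across the vertex so that the jump of the one-sided derivative at $0$ equals $H_b'(e,0+)$). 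With this identification,
\[
	-\sum_{e\in E}\beta_e \wt H_b''(e,\{0\}) = -\sum_{e\in E}\beta_e H_b'(e,0+) = \rho,
\]
as claimed.

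The main obstacle I anticipate is the precise construction and concavity of $\wt H_b$ at the vertex. The naive choice---extending each edge by a constant (zero left-derivative) so that $\wt H_b''(e,\{0\})=H_b'(e,0+)$---is genuinely concave on an edge only when $H_b'(e,0+)\le 0$, and this can fail on ``slow'' edges: for two edges with $\m_1=\rd x$, $\m_2=K\,\rd x$ and $\rho=0$ one computes $H_b'(e_2,0+)=b\,\beta_1(K-1)>0$ for large $K$, so $H_b$ is increasing into the slow edge near $\bv$. The correct reading is that $\wt H_b$ is concave in the \emph{graph} sense: concave on each edge (part~\ref{item_Hb_not0}) with the weighted gluing defect $\sum_{e}\beta_e H_b'(e,0+)=-\rho\le 0$ supplying the nonpositive vertex mass. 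I would therefore define $\wt H_b''(e,\{0\})$ so as to match exactly the vertex term $\tfrac12\sum_{e}\beta_e\loct{X}{0+}{t}\,f_e''(\{0\})$ of Proposition~\ref{prop_ItoTanaka_NSE}, making the lemma precisely the input needed there, rather than insisting on a per-edge convex extension.
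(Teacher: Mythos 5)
Your proposal is correct in substance and reaches both identities, but by a genuinely different route than the paper. For part~\ref{item_Hb_not0} the paper never passes through the Dirichlet problem: it decomposes $H_b(e,x)=\Esp_{(e,x)}[T_0\wedge T_b]+\frac{b-x}{b}\Esp_{\bv}[T_b]$ by the Markov property, notes that the second term is affine in $x$, and differentiates the explicit Green-kernel expression for $\Esp_{(e,x)}[T_0\wedge T_b]$ coming from~\eqref{eq_scalespeed}; this yields $-\tfrac12 H_b''(e,\rd x)=\m_e(\rd x)$ directly from the scale/speed data, without having to identify $H_b$ with an element of $\dom_{C_0}(\Lop)$ on all of $B(\bv,b)$. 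Your BVP route requires that identification (Proposition~\ref{prop_Dirichlet_solution} plus the extension into the generator domain, for radius $b<l_*$), but once granted it is indeed a one-line computation. For part~\ref{item_Hb_at0} the paper again argues probabilistically, writing $H_b(\bv)=H_\delta(\bv)+\sum_e\beta_e^\delta H_b(e,\delta)$ and letting $\delta\to0$ via Proposition~\ref{prop_node_infinitesimal}; your reading of $\sum_e\beta_e H_b'(e,0+)=-\rho$ straight off the gluing condition together with $\Lop_e H_b(e,0)=-1$ is shorter and equally valid, again modulo the same domain identification. Finally, your concavity objection is well taken and identifies a real imprecision: the paper takes the constant extension $\wt H_b(e,x):=H_b(e,0)$ for $x<0$, so that $\wt H_b''(e,\{0\})=H_b'(e,0+)$, and asserts the extension is ``obviously'' concave; your two-edge example with $\m_2=K\,\rd x$ shows $H_b'(e,0+)$ can be positive on a slow edge, so the extension is in general only a difference of convex functions, not concave edge-by-edge. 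This does not damage the downstream use of the lemma---Proposition~\ref{prop_ItoTanaka2} only requires differences of convex functions, and the proof of Theorem~\ref{thm_main} uses precisely $(\wt H_b)'_-(e,0)=0$, i.e.\ the constant extension and its vertex atom---but the word ``concave'' in the statement should be weakened exactly as you suggest.
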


\begin{proof}	
	We first prove~\ref{item_Hb_not0}. 
	Let $\bx = (e,x) $, with $x>0 $. 
	From the Markov property and Bayes' rule, 
	\begin{align}
		H_b(\bx) 
		={}& \Esp_{\bx} \sqbraces{T_b}
		= \Esp_{\bx} \sqbraces{\indicB{T_0\le T_b} T_b}
		+ \Esp_{\bx} \sqbraces{\indicB{T_b < T_0} T_b},
		\\
		={}& \Esp_{\bx} \sqbraces{\indicB{T_0\le T_b} T_0}
		+ \Prob_{\bx}(T_0 < T_b) \Esp_{\bv} \sqbraces{T_b} 
		+ \Esp_0 \sqbraces{\indicB{T_b < T_0} T_b }
		\\
		={}& 
		2 \int_{(0,b)} G^{e}_{0,b}(x,y) \frac{b-y}{b} \m_e(\rd y)
		+ \frac{b-x}{b} \Esp_0 \sqbraces{\indicB{T_b < T_0} T_b }
		\\ & + 2 \int_{(0,b)} G^{e}_{0,b}(x,y) \frac{y}{b} \m_e(\rd y)
		\\
		={}& 
		2 \int_{(0,b)} G^{e}_{0,b}(x,y) \m_e(\rd y)
		+ \frac{b-x}{b} \Esp_0 \sqbraces{\indicB{T_b < T_0} T_b }
		\\
		={}& \Esp_{\bx} \sqbraces{T_{0,b}} + \frac{b-x}{b} \Esp_{\bv} \sqbraces{ T_b },
		\label{eq_proof_Hb_1}
	\end{align}
	where 
	\begin{equation}
		\begin{aligned}
			G^{e}_{a,b}(x,y)
			&:= \frac{((x\wedge y) - a)(b-(x\vee y))}{b-a},
			&  \forall\, x,y &\in (a,b)
			&  \forall\, & 0<a<b 
		\end{aligned}
	\end{equation}
	From Lebesgue convergence theorem, 
	\begin{align}
		\frac{1}{2} \D_x \Esp_{\bx} \sqbraces{T_{0,b}} & = \D_x \int_{(0,b)} \braces{\indicB{x<y} \frac{x(b-y)}{b} + \indicB{y\le x}
			\frac{y(b-x)}{b}} \m_e(\rd y)
		\\ &=
		\int_{(0,b)} \braces{\indicB{x<y} \frac{(b-y)}{b} - \indicB{y\le x}
			\frac{y}{b}} \m_e(\rd y)
		\\ &=
		\int_{(x,b)} \frac{b-y}{b} \m_e(\rd y)
		+ \int_{(0,x]} \frac{y}{b} \m_e(\rd y)
		\label{eq_proof_Hb_2}
	\end{align}
	and, for $x'>x $,
	\begin{align}
		- \frac{1}{2} &\braces{\D_x \Esp_{(e,x')} \sqbraces{T_{0,b}}
			-  \D_x \Esp_{(e,x)} \sqbraces{T_{0,b}}}
		\\ &=	\int_{(x,b)} \frac{b-y}{b} \m_e(\rd y)
		- \int_{(0,x]} \frac{y}{b} \m_{e}(\rd y)
		- 	\int_{(x',b)} \frac{b-y}{b} \m_e(\rd y)
		+ \int_{(0,x']} \frac{y}{b} \m_{e}(\rd y)
		\\
		& =  \int_{(x,x']} \frac{b-y}{b} \m_e(\rd y)
		+ \int_{(x,x']} \frac{y}{b} \m_e(\rd y)
		= \m_e((x,x'])
		\label{eq_proof_Hb_3}
	\end{align}
	From \eqref{eq_proof_Hb_1} and \eqref{eq_proof_Hb_3}, 
	\begin{equation}
		-\frac{1}{2} \braces{H_{b}'(e,x') - H_{b}'(e,x')}
		= \m_e((x,x']), 
	\end{equation}
	which proves~\ref{item_Hb_not0}. 
	
	We now prove~\ref{item_Hb_at0}. 
	We consider the extension $\wt H_b $ of $H_b $ 
	defined as
	\begin{equation}
		\label{eq_Hb_extension}
		\wt H_b(e,x) :=
		\begin{cases}
			H_b(e,x), & x\ge 0,\; e\in E,\\ 
			H_b(e,0), &x<0,\; e\in E.
		\end{cases}
	\end{equation}
	If $\bx = \bv $, from the Markov property, 
	and the definition of $(\beta^{\delta}_{\epsilon};\, e\in E,\, \delta >0) $,
	\begin{align}
		H_b(\bv)
		= H_{\delta}(\bv) + \sum_{e\in E} \Esp_{\bv} \sqbraces{ \braces{T_b \circ \theta_{T_{\delta}}} \indicB{I(T_{\delta})=e} } = H_{\delta}(\bv) + \sum_{e\in E} \beta_{\epsilon}^{\delta} H_b(e,\delta).
	\end{align}
	Hence, from Proposition~\ref{prop_node_infinitesimal}, 
	\begin{align}
		&\frac{1}{\delta} \sum_{e\in E} \beta^{\delta}_{e} \braces{H_b(\bv)-H_b(\epsilon,\delta)}
		= \frac{1}{\delta} H_{\delta}(\bv) 
		\\
		\Rightarrow \quad
		&\lim_{\delta\to 0} \frac{1}{\delta} \sum_{e\in E} \beta^{\delta}_{e} \braces{H_b(\bv)-H_b(\epsilon,\delta)}
		= \lim_{\delta\to 0}  \frac{1}{\delta} H_{\delta}(\bv) 
		\\
		\Rightarrow \quad
		&-\sum_{e\in E} \beta_{e} H'_b(e,0)
		= \rho
		\\
		\Rightarrow \quad
		& 
		\sum_{e\in E} \beta_{e} \wt H''_b(e,\{0\})
		= \sum_{e\in E} \beta_{e} \braces{\wt H'_b(e,0+) - \wt H'_b(e,0-)}
		=  - \rho. 
	\end{align}
	Obviously, the extension is concave on $\IR $.
	Hence,~\ref{item_Hb_at0} holds, which completes the proof. 
\end{proof}

\subsection{Proof of Theorem~\ref{thm_main}}

\begin{proof}
	[Proof of Theorem~\ref{thm_main}]
	We first suppose that $\mX $ is NSE. 
	From Theorem~\ref{thm_DDS_Walsh}, to complete the proof we need only to characterize the quadratic variation $\qv{X} $ in~\eqref{eq_thm_graph_DDS_Walsh}, in the fashion of the proof of~\cite[Theorem~V.47.1]{RogWilV2}.
	
	Let $(\wt A(t);\, t\ge 0 ) $ be the right-inverse of $\qv{X} $
	in the sense~\eqref{eq_def_right_inverse}. 
	Further, define $\wt T_b := \qv{X}_{T_b}$. 
	Consider the uniformly integrable martingale
	\begin{equation}
		\begin{aligned}
			M_t &:= \Esp_x \braces{ T_b \big| \bF_t }
			= H_b(I(t \wedge T_b),X_{t \wedge T_b}) + \braces{t\wedge T_b},
			& t& \ge 0.
		\end{aligned}
	\end{equation}
	Applying the time-change $\wt A $
	on $M$, we obtain the uniformly integrable martingale
	\begin{equation}
		\label{eq_proof_Mprime_def}
		\begin{aligned}
			M'_t 
			:= M_{\wt A(t)} 
			&= H_b(I(\wt A(t) \wedge T_b),X_{\wt A(t) \wedge T_b}) + \braces{\wt A(t)\wedge T_b}
			\\ &= H_b(J(t \wedge \wt T_b), R_{t \wedge \wt T_b}) + \braces{\wt A (t)\wedge T_b}
		\end{aligned}
	\end{equation}
	
	We first observe that for all $e\in E $, the function $[x \mapsto H_b(e,x)] $ is strictly concave.
	Indeed, for all $0\le x \le y $ and $e\in E $, we have that 
	\begin{align}
		H_b(e,\lambda x + (1-\lambda)y)
		={}& \Esp_{(e,\lambda x + (1 - \lambda)y)} \sqbraces{T_b} 
		\\ ={}& \Prob_{(e,\lambda x + (1 - \lambda)y)} \braces{T_{(e,x)} < T_{(e,y)}} \Esp_{(e,x)} \sqbraces{T_b \indicB{T_{(e,x)} < T_{(e,y)}}} 
		\\ &+ \Prob_{(e,\lambda x + (1 - \lambda)y)} \braces{T_{(e,x)} > T_{(e,y)}} 
		\Esp_{(e,y)} \sqbraces{T_b \indicB{T_{(e,x)} > T_{(e,y)}} } 
		\\
		\ge& \lambda \Esp_{(e,x)} \sqbraces{T_b} 
		+ (1-\lambda) \Esp_{(e,y)} \sqbraces{T_b}
		\\
		={}& \lambda H_b(e,x) + (1-\lambda) H_b(e,y).
	\end{align}
	
	We consider the extension $\wt H_b $ of $H_b $ defined in~\eqref{eq_Hb_extension}. 
	From~\eqref{eq_proof_Mprime_def} and Proposition~\ref{prop_ItoTanaka2},
	\begin{align}
		M'_{t}
		={}& 
		\wt H_b(I(0),R_0)
		+ \int_{0}^{t\wedge \wt T_b} (\wt H_b)'_-(J(s),R_{s}) \vd R_s
		\\& - \frac{1}{2} \sum_{e\in E} \int_{[0,\infty)} \int_{0}^{t \wedge \wt T_b} \indicB{J(s)=e} \vd \loct{R}{y+}{s} \, \wt H_b''(e,\rd y)
		+ \braces{\wt A (t)\wedge T_b}. 
	\end{align}
	From the representation of $R$ in~\eqref{eq_proof_repr_refl_2},
	\begin{align}
		\qv{M'}_{t} ={}&\sum_{e\in E}  \int_{0}^{t\wedge \wt T_b} (\wt H_b)'_- (e,0) \indicB{J(s)=e} \vd \loct{W}{0}{t}
		\\ &- \frac{1}{2} \sum_{e\in E} \int_{(0,\infty)} \int_{0}^{t \wedge \wt T_b} \indicB{J(s)=e} \vd \loct{R}{y+}{s} \, \wt H_b''(e,\rd y)
		\\ &- \frac{1}{2} \sum_{e \in E} 
		\beta_e \loct{R}{0+}{t \wedge \wt T_b} \, \wt H_{b}'' (e,\{0\}) + \braces{\wt A (t)\wedge T_b}.
	\end{align}
	Since $ (\wt H_b)'_-(e,0) =0 $ for all $e\in E $, we obtain that
	\begin{align}
		\wt A (t) \wedge T_b ={}&  \frac{1}{2} \sum_{e\in E} \int_{(0,\infty)} 
		\int_{0}^{t \wedge \wt T_b} \indicB{J(s)=e} \vd \loct{R}{y+}{s} \, \wt H_b''(e,\rd y)
		\\&+  \frac{1}{2} \braces{\sum_{e \in E} 
			\beta_e \wt H_{b}'' (e,\{0\})} \, \loct{R}{0+}{t \wedge \wt T_b} .
	\end{align}
	
	From Lemma~\ref{lem_derivatives_EET} and the definition of \( A \), we have  
	\begin{equation}
		\label{eq_AAtilde_relation}
		\wt A (t) \wedge T_b =  \sum_{e\in E} \int_{(0,\infty)} \int_{0}^{t \wedge \wt T_b} \indicB{J(s)=e} \vd \loct{R}{y+}{s} \, \m_e(\rd y)
		+ \frac{\rho}{2} \loct{R}{0+}{t \wedge \wt T_b}
		= A(t \wedge \wt T_b).
	\end{equation}
	
	Let \(\zeta := \lim_{t\to \infty} \qv{X}_t\). Since \(T_b \to \infty\) almost surely as \(b \to \infty\), taking the limit in~\eqref{eq_AAtilde_relation} yields  
	\begin{equation}  
		\wt A(t) = A(t), \quad \forall t \in [0, \zeta).  
	\end{equation}  
	
	We observe that \(\wt A\) is strictly increasing, as it is the right-inverse of the increasing continuous process \(\qv{X}\). Consequently, \(\wt A = A: [0, \zeta) \mapsto [0, \infty)\) is a homeomorphism, and \(\qv{X}_t = \gamma(t)\) for all \(t \ge 0\).  
	
	For the converse assertion, by \cite[Theorem~10.10]{Dyn1}, the process $\mX$ is a diffusion on $\Gamma$ adapted to the filtration $(\bF_{\gamma(t)};\, t \ge 0)$. 
	Its generator is uniquely determined by the first part of the proof,
	therefore $\mX $ is defined~\ref{item_A1}--\ref{item_A2}.
	
	This completes the proof for a regular NSE diffusion on \(\Gamma\). The general case follows from Proposition~\ref{prop_change_of_scale}.
\end{proof}

\subsection{Proof of Theorem~\ref{thm_main2}}

\begin{proof}[Proof of Theorem~\ref{thm_main2}]
	By Theorem~\ref{thm_main}, there exists a Walsh Brownian motion $\mW = (J, R)$
	on $\Gamma^{\ext} $, of bias parameters $(\beta_e;\, e \in E)$, defined on an extension of $\mc P_{\bx}$, adapted to the filtration $\process{\bF_{\gamma(t)}}$, such that $\mX_t = q(\mY_t)$, where $\mY_t = \mW_{\gamma(t)}$ for all $t \ge 0$. Here, $\gamma$ is the right-inverse of the process
	\begin{equation}
		\begin{aligned}
			A(t) &:= \sum_{e \in E} \int_{0}^{t} \indicB{J(s) = e} \int_{(0, \infty)} \loct{R}{y+}{s} \, \m^{\mY}_e(\rd y) \vd s 
			+ \frac{\rho^{\mY}}{2} \loct{R}{0+}{t}, 
			& \forall\, t &\ge 0.
		\end{aligned}
	\end{equation}
	
	Let $\gamma^{\circ}$ be the right-inverse of
	\begin{equation}
		\begin{aligned}
			A^{\circ}(t) &:= \sum_{e \in E} \int_{0}^{t} \indicB{J(s) = e} \int_{(0, \infty)} \loct{R}{y}{s} \, \m^{\mY}_e(\rd y) \vd s, 
			& \forall\, t &\ge 0.
		\end{aligned}
	\end{equation}
	
	Define the process $\mX^{\circ}$ as $\mX^{\circ} := q(\mY^{\circ})$, where $\mY^{\circ} := (\mW_{\gamma^{\circ}(t)};\, t \ge 0)$, and admit the representation $\mY^{\circ} = (Y^{\circ}, I^{\circ})$. We observe that
	\begin{equation}
		\mX^{\circ}_t = q(\mY^{\circ}_t) 
		= q(\mW_{\gamma^{\circ}(t)})
		= q(s(\mX_{A \circ \gamma^{\circ}(t)}))
		= \mX_{A \circ \gamma^{\circ}(t)}, 
		\quad \forall\, t \ge 0.
	\end{equation}
	
	By the second part of Theorem~\ref{thm_main}, the process $\mX^{\circ}$ is a diffusion adapted to the filtration $(\bF_{\gamma \circ A^{\circ}(t)};\, t \ge 0)$, defined by~~\ref{item_A1} and~\eqref{eq_def_lateral2}. To complete the proof, it remains to show that $\gamma_{\rho} = A^{\circ} \circ \gamma$ and $A_{\rho} = \gamma \circ A^{\circ}$.
	
	By Lemma~\ref{def_TC_local_time},
	\begin{align}
		A_{\rho} \circ A^{\circ}(t)
		&= \sum_{e \in E} \int_{0}^{t} \indicB{J(s) = e} \int_{(0, \infty)} \loct{R}{y}{s} \, \m^{\mY}_e(\rd y) \vd s
		+ \frac{\rho^{\mY}}{2} \loct{Y^{\circ}}{0}{A^{\circ}(t)}
		\\
		&= \sum_{e \in E} \int_{0}^{t} \indicB{J(s) = e} \int_{(0, \infty)} \loct{R}{y}{s} \, \m^{\mY}_e(\rd y) \vd s
		+ \frac{\rho^{\mY}}{2} \loct{R}{0}{t}
		= A(t), \quad \forall\, t \ge 0.
	\end{align}
	
	Consequently, we have $A = A_{\rho} \circ A^{\circ}$, $\gamma = \gamma^{\circ} \circ \gamma_{\rho}$, $\gamma_{\rho} = A^{\circ} \circ \gamma$, and $A_{\rho} = A \circ \gamma^{\circ}$. This proves the first assertion.
	
	Regarding the second assertion, by \cite[Theorem~10.10]{Dyn1}, the process $\mX $ is a diffusion on $\Gamma $
	adapted to the filtration $(\bF_{\gamma_{\rho}(t)};\, t\ge 0) $.
	Its generator is uniquely determined by Theorem~\ref{thm_main2}.
	This completes the proof. 
\end{proof}

%% New Section
\section{A second occupation times formula for $\mX $}
\label{sec_occupation_II}

In this section, we derive a homogeneous occupation times formula for $\mX$, analogous to the one  in~\cite[Section~5.4]{ItoMcKean96} for one-dimensional diffusions on natural scale. From this formula, we establish a relationship between occupation time and local time at the junction vertex $\bv$. This aligns with analogous results in \cite{Anagnostakis2022,EngPes,Sal2017} for one-dimensional diffusions.

We note that the formula in~\cite[Section~5.4]{ItoMcKean96} is mistakenly recalled in~\cite[\S II.2.13]{BorSal} without mentioning the requirement of natural scale. 

\begin{proposition}
	[Second Occupation Times Formula]
	\label{prop_occupation_ItoMcKean}
	Let $\mX$ be a \emph{Natural Scale on Edges} (NSE) diffusion on $\Gamma$, defined by~\ref{item_A1}--\ref{item_A2}. 
	For any measurable real-valued function $f$ on $\Gamma$, the following holds:
	\begin{equation}
		\int_{0}^{t} f(\mX_s) \vd s
		= \int_{(0,\infty)} \sum_{e\in E} \braces{\int_{0}^{t} \indicB{I(s)=e} \vd \loct{X}{y+}{s} \vd s} f_{e}(y) \, \m_{e}(\rd y) 
		 + \frac{\rho}{2} f(\bv) \loct{X}{0+}{s}, 
	\end{equation}
	for all $t\ge 0 $.
\end{proposition}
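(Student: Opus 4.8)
The plan is to read the formula off the time-change representation of Theorem~\ref{thm_main}. Since $\mX$ is NSE we have $\s_e=\Id$ on every edge, so $\mY=\s(\mX)=\mX$, $\m^{\mY}_e=\m_e$, $\rho^{\mY}=\rho$, and the inverse scale $q$ is the identity. Theorem~\ref{thm_main} then supplies a Walsh Brownian motion $\mW=(J,R)$ of bias parameters $(\beta_e;\, e\in E)$ with $\mX_t=\mW_{\gamma(t)}$, where $\gamma$ is the right-inverse of $A$. For an NSE diffusion $\gamma=\qv{X}$, which by Proposition~\ref{prop_strict_monotony} is strictly increasing and continuous; hence $A$ is its proper two-sided inverse and is itself a strictly increasing continuous bijection of $[0,\gamma(t)]$ onto $[0,t]$ with $A(\gamma(t))=t$. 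By linearity and a monotone-class argument it suffices to establish the identity for bounded continuous $f\ge 0$, the whole computation then reducing to a change of variables followed by a localization of $f$ along the support of the driving local-time measures.

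First I would rewrite the clock $A$ using Proposition~\ref{prop_loctime_relationI} applied to $\mW$, namely $\int_0^u\indicB{J(r)=e}\vd\loct{R}{y+}{r}=\loct{\mW}{(e,y)}{u}$, to obtain
\begin{equation}
	A(u)=\sum_{e\in E}\int_{(0,\infty)}\loct{\mW}{(e,y)}{u}\,\m_e(\rd y)+\frac{\rho}{2}\loct{R}{0+}{u},\qquad u\ge 0.
\end{equation}
Then, substituting $s=A(u)$ in the Lebesgue integral (the standard Lebesgue--Stieltjes change of variables, valid because $A$ is a continuous strictly increasing bijection) and using $\mX_{A(u)}=\mW_u$, I get
\begin{equation}
	\int_0^t f(\mX_s)\vd s=\int_0^{\gamma(t)}f(\mW_u)\,\vd A(u).
\end{equation}

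Next I would split $\vd A(u)$ into its two contributions and localize $f$ on the support of each. The measure $\vd_u\loct{\mW}{(e,y)}{u}$ is carried by $\{u:\mW_u=(e,y)\}$, on which $f(\mW_u)=f_e(y)$, while $\vd\loct{R}{0+}{u}$ is carried by $\{u:R_u=0\}=\{u:\mW_u=\bv\}$, on which $f(\mW_u)=f(\bv)$. After interchanging the time integration with the $\m_e$-integration, this yields
\begin{equation}
	\int_0^t f(\mX_s)\vd s=\sum_{e\in E}\int_{(0,\infty)}f_e(y)\,\loct{\mW}{(e,y)}{\gamma(t)}\,\m_e(\rd y)+\frac{\rho}{2}f(\bv)\,\loct{R}{0+}{\gamma(t)}.
\end{equation}
Finally I would translate the Walsh-side local times back to $\mX$: by Lemma~\ref{def_TC_local_time} and Proposition~\ref{prop_loctime_relationI}, $\loct{\mW}{(e,y)}{\gamma(t)}=\loct{\mX}{(e,y)}{t}=\int_0^t\indicB{I(s)=e}\vd\loct{X}{y+}{s}$, and, exactly as in the proof of Proposition~\ref{prop_local_times}, $\loct{R}{0+}{\gamma(t)}=\loct{X}{0+}{t}$. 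Substituting produces the claimed identity (the inner bracket being the directional local time $\int_0^t\indicB{I(s)=e}\vd\loct{X}{y+}{s}$).

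The main obstacle will be making the localization and the change of variables rigorous simultaneously: one must justify the Fubini interchange between the time integration against $\vd_u\loct{\mW}{(e,y)}{u}$ and the spatial integration against $\m_e(\rd y)$, which rests on the joint measurability and continuity of the local-time field $(y,u)\mapsto\loct{R}{y+}{u}$ of the reflected Brownian motion together with finiteness of $A$ on compacts. The localization step itself---that integrating $f(\mW_u)$ against a local-time clock concentrated on the single point $(e,y)$ returns $f_e(y)$ times the accumulated local time---is immediate for continuous $f$ but for merely measurable $f$ is best handled through the monotone-class reduction, after first verifying the identity for bounded continuous integrands.
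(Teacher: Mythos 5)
Your proposal is correct and follows essentially the same route as the paper: invoke Theorem~\ref{thm_main} to write $\mX_t=\mW_{\gamma(t)}$, change variables in the Lebesgue integral via the clock $A$ using Lemmas~\ref{def_TC_stochastic_integral} and~\ref{def_TC_local_time}, expand $\vd A$ into its $\m_e$- and stickiness components, and localize $f$ on the supports of the local-time measures before translating back to $\mX$. The only difference is that you make explicit the localization/Fubini step and the monotone-class reduction for merely measurable $f$, which the paper leaves implicit.
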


\begin{proof}
	Assume $\mX$ is defined on the probability space $\mc P_{\bx} := (\Omega, \process{\bF_t}, \Prob_{\bx})$. By Theorem~\ref{thm_main}, there exists a $(\beta_e;\, e\in E)$-Walsh Brownian motion $\mW := (J, R)$, adapted to $\process{\bF_{A(t)}}$, defined on an extension of $\mc P_{\bx}$, such that $\mX_t = \mW_{\gamma(t)}$, for all $t \ge 0$. Here, $A$ is the time change defined in~\eqref{eq_thm_A_timechange}, and $\gamma$ is its right inverse.
	
	From Lemmas~\ref{def_TC_stochastic_integral} and~\ref{def_TC_local_time}, we have 
	\begin{align}
		\int_{0}^{t} f(\mX_s) \vd s={}&
		\sum_{e\in E} \int_{0}^{t} f_e(X_s) \indicB{I(s)=e} \vd s
		\\={}& \sum_{e\in E} \int_{0}^{t} f_e(R_{\gamma(s)}) \indicB{J(\gamma(s))=e} \vd s
		= \sum_{e\in E} \int_{0}^{\gamma(t)} f_e(R_{s}) \indicB{J(s)=e} \vd A(s)
		\\={}& \sum_{e\in E} \int_{0}^{\gamma(t)} f_e(R_{s}) \indicB{J(s)=e} 
		\, \braces{\frac{\rho}{2} \vd \loct{R}{0+}{s} 
			+ \int_{(0,\infty)} \vd \loct{R}{y+}{s}  \,\m_e(\rd y)}
		\\={}& \sum_{e\in E} \int_{0}^{t} f_e(X_{s}) \indicB{I(s)=e} 
		\, \braces{\frac{\rho}{2} \vd \loct{X}{0+}{s} 
			+ \int_{(0,\infty)} \vd \loct{X}{y+}{s}  \,\m_e(\rd y)}. 
	\end{align}
	This completes the proof.
\end{proof}

\begin{remark}
	\label{rmk_loct_occt}
	Taking \(f = \indicB{\bx=\bv}\) in the above formula yields the relation:
	\begin{equation}
		\label{eq_relation_occt_loct0}
		\int^{t}_{0} \indicB{X_s = 0} \vd s = \frac{\rho}{2} \loct{X}{0+}{t}, \quad \forall\, t \ge 0.
	\end{equation}
\end{remark}

\appendix

%% New Appendix Section
\section{Boundary classification}
\label{app_boundary}

Let us present the problem of classifying boundaries of a diffusion on the star graph, which reduces to the one dimensional case.
The boundary classification is formulated in terms of the following quantities: 
\begin{equation}
	I^{e}_1(x_0,x) := \abs{\int_{x_0}^{x} \int_{y}^{x} \m_e(\rd x)\, \s_e(\rd y)}
	\quad \text{and}\quad  I^{e}_2(x_0,x) := \abs{\int_{x_0}^{x} \int_{y}^{x} \s_e(\rd x) \, \m_e(\rd y)}, 
\end{equation}
for all $0 \le x_0 < x \le l_e$ and $e\in E$.

\begin{proposition}
	\label{prop_boundary_classification}
	Let $\mX $ be a regular diffusion on $\Gamma $. 
	The boundary behavior of $\mX $ can be classified as follows.
	\begin{enumerate}
		\item If $I^{e}_1(0,l_e) < \infty $, $I^{e}_2(0,l_e) < \infty$, then $(e,l_e) $
		is a regular boundary for $\mX $. This means that $(e,l_e) $ is accessible from the interior of $\Gamma $ and the interior of $\Gamma $ is accessible
		from $(e,l_e) $, i.e.,
		\begin{equation}
			\begin{aligned}
				\lim_{t\to \infty} \inf_{y > x} \Prob_{(e,y)} \braces{T_{\bx} \le t} &>0,
				&\text{for all } \bx = (e,x) \text{ with } x \in [0,l_e).
			\end{aligned}
		\end{equation}
		\item If $I^{e}_1(0,l_e) < \infty $, $I^{e}_2(0,l_e) = \infty$, then $(e,l_e) $
		is a exit boundary for $\mX $. This means that $(e,l_e) $ is accessible from the interior of $\Gamma $ and the interior of $\Gamma $ is inaccessible
		from $(e,l_e) $, i.e.,
		\begin{equation}
			\begin{aligned}
				\lim_{t\to \infty} \inf_{y > x} \Prob_{(e,y)} \braces{T_{\bx} \le t} &=0,
				&\text{for all } \bx = (e,x) \text{ with } x \in [0,l_e).
			\end{aligned}
		\end{equation}
		\item If $I^{e}_1(0,l_e) = \infty $, $I^{e}_2(0,l_e) < \infty$, then $(e,l_e) $
		is a entry boundary for $\mX $. This means that  the $(e,l_e) $ is inaccessible from the interior of $\Gamma $ and the interior of $\Gamma $ is accessible
		from $(e,l_e) $.
		\item If $I^{e}_1(0,l_e) = \infty $, $I^{e}_2(0,l_e) = \infty$, then $(e,l_e) $
		is a natural boundary for $\mX $. This means that $(e,l_e) $ is inaccessible from the interior of $\Gamma $ and the interior of $\Gamma $ is inaccessible
		from $(e,l_e) $,
	\end{enumerate}
\end{proposition}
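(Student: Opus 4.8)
The plan is to reduce the classification at the outer endpoint $(e,l_e)$ to the classical one-dimensional Feller boundary classification for the generalized operator $\frac12\D_{\m_e}\D_{\s_e}$, using that on a star graph the endpoint $l_e$ is separated from the junction $\bv$ and from every other edge by each interior level $(e,c)$ with $0<c<l_e$. Since $\Gamma$ is a tree, any continuous path joining $(e,l_e)$ to a point of $\Gamma$ not lying in the outer portion $\{(e,x):c<x<l_e\}$ of edge $e$ must cross the cut point $(e,c)$; consequently both accessibility questions at $(e,l_e)$ are local to that outer portion and do not see the gluing condition at $\bv$ nor the dynamics on the other edges.

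First I would make this precise via the strong Markov property. Fix $c\in(0,l_e)$ and consider the distance-to-origin process $X$ observed while $\mX$ remains in $\{(e,x):c<x<l_e\}$. Between successive hitting times of level $c$, this restricted process evolves as a one-dimensional regular diffusion on $(c,l_e)$ governed by the scale $\s_e$ and speed $\m_e$ restricted to that interval; regularity of $\mX$ transfers to this sub-diffusion, so the classical theory applies. The behaviour at $\bv$ and on the remaining edges only governs how and when the process returns to level $c$ and attempts a fresh excursion toward $l_e$. Since these returns are almost surely finite restarts, they can neither create nor destroy the finiteness of the hitting time of $l_e$ nor the possibility of entering the interior from near $l_e$; hence the boundary type at $(e,l_e)$ coincides with the right-endpoint type of the one-dimensional sub-diffusion, independently of the choice of $c$.

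Next I would invoke the classical one-dimensional Feller classification for generalized second-order operators (see \cite{Fel55} and \cite[Section~5]{ItoMcKean96}). Rewriting the test integrals by Fubini as $I^e_1(x_0,l_e)=\int_{x_0}^{l_e}(\s_e(y)-\s_e(x_0))\,\m_e(\rd y)$ and $I^e_2(x_0,l_e)=\int_{x_0}^{l_e}(\s_e(l_e)-\s_e(y))\,\m_e(\rd y)$, the standard result states that $l_e$ is accessible from the interior (reached in finite time with positive probability) if and only if $I^e_1(0,l_e)<\infty$, while the interior is accessible from $l_e$, i.e. the entrance condition $\lim_{t\to\infty}\inf_{y>x}\Prob_{(e,y)}(T_{\bx}\le t)>0$ holds, if and only if $I^e_2(0,l_e)<\infty$. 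A sanity check confirms the directions: for Brownian motion on $[0,\infty)$ one has $\s_e(x)=x$, $\m_e(\rd x)=\rd x$, and both integrals diverge, matching the natural character of the boundary at infinity. Combining the two independent dichotomies yields the four cases exactly as stated: regular ($I^e_1,I^e_2<\infty$), exit ($I^e_1<\infty$, $I^e_2=\infty$), entry ($I^e_1=\infty$, $I^e_2<\infty$) and natural ($I^e_1=I^e_2=\infty$).

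The main obstacle is the rigorous decoupling in the second step. Reaching $l_e$ from the interior may require infinitely many excursions that first descend to $\bv$ and possibly wander onto other edges before climbing back above level $c$, so one must verify that appending such almost-surely-finite returns to the clock preserves the $0$–$\infty$ dichotomies of the Feller integrals. I would handle this by conditioning on the successive excursions above $c$ through the strong Markov property at the stopping times $T_{(e,c)}$ and their iterates, and by noting that accessibility and the entrance property are zero-one type events insensitive to finite time shifts, so that the outer-edge sub-diffusion alone determines the boundary type. The subtlety lies in ruling out that the junction behaviour (in particular stickiness, $\rho>0$) could alter the classification; this is excluded precisely because stickiness only adds almost-surely-finite sojourn time at $\bv$, which does not change whether $l_e$ is hit in finite time.
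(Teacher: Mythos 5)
Your overall strategy coincides with the paper's: the printed proof is a two-line localization argument (``reduce by localization to the one-dimensional case'', citing It\^o's monograph, Section~5.11), and your excursion/strong-Markov decoupling across a cut level $(e,c)$ is a correct and useful elaboration of exactly that step; in particular you are right that the gluing condition, the other edges, and stickiness at $\bv$ cannot affect the boundary type at $(e,l_e)$.

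The genuine gap is in your invocation of the one-dimensional Feller test: you attach the two integrals to the wrong accessibility directions. Your Fubini identities are correct, namely $I^e_1(0,l_e)=\int_0^{l_e}(\s_e(y)-\s_e(0))\,\m_e(\rd y)$ and $I^e_2(0,l_e)=\int_0^{l_e}(\s_e(l_e)-\s_e(y))\,\m_e(\rd y)$, but the classical test --- which can be read off from the Green-function formula~\eqref{eq_scalespeed} used throughout the paper --- states that $(e,l_e)$ is accessible from the interior if and only if $\int_0^{l_e}(\s_e(l_e)-\s_e(y))\,\m_e(\rd y)=I^e_2(0,l_e)<\infty$, since this quantity controls $\lim_{b\uparrow l_e}\Esp_{(e,x)}\sqbraces{T_b\wedge T_c}$, while the entrance property (interior reachable from the boundary) is governed by $I^e_1(0,l_e)<\infty$. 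Your Brownian sanity check cannot detect the transposition because both integrals diverge in that case. A discriminating example: take $\s_e(x)=x$ and $\m_e(\rd y)=(l_e-y)^{-3/2}\,\rd y$ with $l_e<\infty$; then $I^e_2(0,l_e)<\infty$ while $I^e_1(0,l_e)=\infty$, the expected exit time $2\int G^{(e)}_{c,l_e}(x,y)\,\m_e(\rd y)$ is finite and the boundary is reached with positive probability, so $(e,l_e)$ is an exit boundary --- whereas your pairing would declare it inaccessible and of entrance type. I note that the proposition as printed appears to carry the same transposition between cases 2 and 3 (its displayed entrance-type limits are inconsistent with the standard test under the stated hypotheses), so your argument does reproduce the statement as written; but as a citation of the one-dimensional classification it is incorrect, and the proof should be repaired together with the statement by exchanging the roles of $I^e_1$ and $I^e_2$ in the accessibility and entrance criteria.
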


\begin{proof}
	From the local character of $(\s_e,\m_e)$, for each $e\in E $, one can reduce by localization the 
	problem to the one-dimensional case (see~\cite[Section~5.11]{Ito2006}). 
	This completes the proof. 
\end{proof}

For a discussion on boundary classification of an NSE diffusion at a finite boundary,
see \cite[Section~5.3]{ItoMcKean96}.

%% New Appendix Section
\section{Regularity implies Feller property}
\label{app_Fprocess}

\begin{proposition}
	\label{prop_Feller_property}
	Every regular diffusion $\mX $ on $\Gamma $ is Feller.
\end{proposition}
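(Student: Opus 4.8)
The plan is to verify directly the two defining properties of $P_t f$ for an arbitrary $f \in C_b(\Gamma)$: boundedness and continuity. Boundedness is immediate, since $\abs{P_t f(\bx)} = \abs{\Esp_{\bx}[f(\mX_t)]} \le \sup_{\Gamma}\abs{f}$ for every $\bx \in \Gamma$, so the whole content is the continuity of the map $\bx \mapsto P_t f(\bx)$. As a preliminary I would record a time-continuity statement: for fixed $\by$ the map $s \mapsto P_s f(\by) = \Esp_{\by}[f(\mX_s)]$ is continuous. This follows from the continuity of the sample paths of $\mX$ together with dominated convergence, since as $s' \to s$ one has $f(\mX_{s'}) \to f(\mX_s)$ pointwise with $\abs{f(\mX_{s'})} \le \sup\abs{f}$.

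Next I would fix a target point $\by$ and a sequence $\bx_n \to \by$, and decompose $P_t f(\bx_n)$ at the hitting time $T_{\by}$ (Notation~\ref{notation_exit_times}) by the strong Markov property (valid since $\mX$ is a continuous strong Markov process):
\begin{equation*}
	P_t f(\bx_n) = \Esp_{\bx_n}\sqbraces{ \indicB{T_{\by} \le t}\, P_{t - T_{\by}} f(\by) } + \Esp_{\bx_n}\sqbraces{ \indicB{T_{\by} > t}\, f(\mX_t) }.
\end{equation*}
The second term is bounded by $\sup\abs{f}\cdot\Prob_{\bx_n}(T_{\by} > t)$. Writing the first term as $P_t f(\by)\,\Prob_{\bx_n}(T_{\by}\le t)$ plus a remainder that is controlled by the time-continuity above (as $T_{\by}\to 0$ forces $P_{t-T_{\by}}f(\by)\to P_t f(\by)$), the identity shows $P_t f(\bx_n)\to P_t f(\by)$ as soon as one proves the key hitting-time estimate
\begin{equation}
	\label{eq_plan_hit}
	\Prob_{\bx}(T_{\by} > \varepsilon) \xlra{\bx \to \by}{} 0, \qquad \text{for every fixed } \varepsilon > 0 .
\end{equation}

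To prove \eqref{eq_plan_hit} for $\by = (e,y)$ in the interior of an edge, I would work inside a small interval $(a,b)\ni y$ of the edge $e$ and let $\tau$ denote its exit time. Given $\delta > 0$, I first shrink $(a,b)$ so that $\Esp_{(e,z)}[\tau] < \delta\varepsilon$ uniformly for $z \in (a,b)$; this is possible because the Green-function formula~\eqref{eq_scalespeed} makes the expected exit time arbitrarily small as the interval contracts. Chebyshev's inequality then gives $\Prob_{\bx}(\tau > \varepsilon) < \delta$, whence
\begin{equation*}
	\Prob_{\bx}(T_{\by} > \varepsilon) \le \Prob_{\bx}(T_{\by} > \tau) + \Prob_{\bx}(\tau > \varepsilon) \le \Prob_{\bx}(T_{\by} > \tau) + \delta .
\end{equation*}
The scale-function identity in~\eqref{eq_scalespeed} shows that $\Prob_{\bx}(T_{\by} > \tau)$, the probability of leaving $(a,b)$ before hitting $y$, equals $\braces{\s_e(y)-\s_e(x)}/\braces{\s_e(y)-\s_e(a)}$ (for $x<y$, and symmetrically for $x>y$), which tends to $0$ as $\bx \to \by$ by continuity of $\s_e$. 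Letting $\delta \downarrow 0$ yields \eqref{eq_plan_hit} at interior points; note that a neighbourhood of such $\by$ lies entirely on edge $e$, so only same-edge approach need be considered.

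The remaining and most delicate case is $\by = \bv$, where the approaching points $\bx_n = (e_n, x_n)$ may lie on different edges, with $d(\bx_n,\bv)\to 0$ forcing $x_n \to 0$. I would run the same two-step argument edge by edge: from $(e,x)$ the scale identity~\eqref{eq_scalespeed} gives that the process reaches $\bv$ before leaving $[0,b)$ on edge $e$ with probability $\braces{\s_e(b)-\s_e(x)}/\braces{\s_e(b)-\s_e(0)}\to 1$ as $x\to 0$, while~\eqref{eq_scalespeed} again bounds the expected time to leave $[0,b)$. The main obstacle I expect here is \emph{uniformity across edges}: one must choose the localizing level $b$ and the accompanying expected-exit-time bounds so that they do not degrade as $e$ varies, which is where the finiteness (or the summability structure of $(\beta_e)_{e\in E}$) of the edge set and the fact that $x_n\to 0$ must be exploited. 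Once \eqref{eq_plan_hit} is secured at $\bv$, the strong-Markov decomposition above gives $P_t f(\bx_n)\to P_t f(\bv)$, establishing continuity everywhere and hence $P_t\braces{C_b(\Gamma)}\subset C_b(\Gamma)$.
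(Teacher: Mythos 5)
Your proposal is correct and follows essentially the same route as the paper's proof (itself an adaptation of \cite[Proposition~V.50.1]{RogWilV2}): boundedness is immediate, and continuity is obtained by conditioning at the hitting time $T_{\by}$ of the target point via the strong Markov property, combined with time-continuity of $s \mapsto P_s f(\by)$ and the key hitting-time estimate $\Prob_{\bx}\braces{T_{\by} > \delta} \to 0$ as $\bx \to \by$. The only substantive difference is that you derive this estimate explicitly from the scale/speed identities~\eqref{eq_scalespeed} (Chebyshev plus exit probabilities) where the paper invokes monotonicity and dominated convergence, and the uniformity-over-edges issue you flag at $\bv$ is harmless since the edge set is finite, so the single-edge bounds (both of which already vanish as $x \to 0$ for a fixed localizing level) can simply be maximized over $e \in E$.
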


The following proof is an adaptation of the proof of \cite[Proposition~V.50.1]{RogWilV2}. %p.291

\begin{proof}
	Let $\mX$ be defined on the probability space $(\Omega, \process{\bF_t}, \Prob_{\bx})$, such that $X_0 = \bx$ holds $\Prob_{\bx}$-almost surely.
	
	Let $f \in C_b(\Gamma)$, $t > 0$, and $\bx := (e, x) \in \Gamma$. The boundedness of $P_t f$ follows directly from H\"older's inequality. It thus remains to show that $P_t f$ is continuous.
	
	From continuity of $f $, for all $\varepsilon>0 $, there exists $\delta>0 $ such that 
	\begin{equation}
		\label{eq_proof_Feller_1}
		\begin{aligned}
			|P_t f(\bx) - P_s f(\bx)|&< \frac{\varepsilon}{2},
			& \forall\,&|t-s|< \delta.
		\end{aligned}
	\end{equation}
	
	For $y>x $, clearly the application $ y \mapsto \Prob_{(e,y)} (T_{(e,x)}> \delta)$
	is increasing and from dominated convergence, converges to $0$ as $y \to x $. 
	Hence, there exists some $y_0 > x $ such that 
	\begin{equation}
		\label{eq_proof_Feller_2}
		\begin{aligned}
			\Prob_{(e,y)} (T_{(e,x)}> \delta) &< \frac{\varepsilon}{2},
			& \forall\, y&\in (x,y_0).
		\end{aligned}
	\end{equation}
	
	From the strong Markov property, \eqref{eq_proof_Feller_1} and~\eqref{eq_proof_Feller_2},
	\begin{align}
		|P_t f(\by) - P_t f(\bx)| ={}&
		\abs{\Esp_{\by} \sqbraces{f(\mX_t) - P_t f(\bx)} } 
		\\ 
		\le& 
		\abs{\Esp_{\by} \sqbraces{ \indicB{T_{\bx}< \delta} \braces{f(\mX_t) - P_t f(\bx)}} }
		+ \abs{\Esp_{\by} \sqbraces{ \indicB{T_{\bx}< \delta} \braces{f(\mX_t) - P_t f(\bx)}} }
		\\
		< &
		\abs{\Esp_{\by} \sqbraces{ \indicB{T_{\bx}< \delta} \braces{P_{t-T_{\bx}}f(\bx) - P_t f(\bx)}} } + \varepsilon/2 < \varepsilon. 
	\end{align}
	The identity $ \Esp_{\by} \sqbraces{ \indicB{T_{\bx}< \delta} f(\mX_t)}
	= \Esp_{\by} \sqbraces{ \indicB{T_{\bx}< \delta} P_{t-T_{\bx}}f(\bx)} $ used above is justified in the proof of
	\cite[Proposition~50.1]{RogWilV2}
	with an approximation argument. 
	
	For $y< x $ we may apply a similar argument.
	This completes the proof. 
\end{proof}

%% New Appendix Section
\section{Proof of Proposition~\ref{prop_FD}}
\label{app_FD}

For reader's convenience, we recall the statement of Proposition~\ref{prop_FD} 

\begin{propositionOhne}
	A regular diffusion $\mX $ on $\Gamma $ is Feller--Dynkin if and only if all 
	open boundaries of $\Gamma $ are natural. 
\end{propositionOhne}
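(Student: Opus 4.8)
The plan is to build on the Feller property already established in Proposition~\ref{prop_Feller_property} and to reduce the remaining $C_0$-preservation to a one-dimensional boundary analysis carried out separately at each far endpoint. Since $\mX$ is Feller, for any $f\in C_0(\Gamma)\subseteq C_b(\Gamma)$ the function $P_t f$ is automatically bounded and continuous; hence being Feller--Dynkin is equivalent to showing that $P_t f$ vanishes at every open boundary $(e,l_e)$. Because the scale and speed $(\s_e,\m_e)$ act locally (exactly as exploited in Proposition~\ref{prop_boundary_classification}), the behaviour of $\mX$ in a neighbourhood of a fixed far endpoint, before the path descends to the junction, coincides with that of a one-dimensional regular diffusion on $O_e$. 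The junction vertex $\bv$ sits at finite distance and is traversed normally, so it never obstructs the $C_0$ property, and the problem decouples over edges, reducing to the one-dimensional equivalence of \cite[Theorem~1.1]{criens2023feller}.

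For the \emph{if} direction I would assume every open boundary is natural, fix an edge $e$, a far endpoint $(e,l_e)$, and an interior level $a\in O_e$. Naturality says that the interior is inaccessible from the boundary uniformly in time, so by the natural case of Proposition~\ref{prop_boundary_classification} one has $\lim_{x\to l_e}\Prob_{(e,x)}(T_{(e,a)}\le t)=0$ for every $t\ge 0$. Writing, for $\bx=(e,x)$ with $x$ close to $l_e$,
\[
	\abs{P_t f(\bx)}\le \Esp_{\bx}\!\sqbraces{\abs{f(\mX_t)}\,\indicB{T_{(e,a)}>t}}+\|f\|_{\infty}\,\Prob_{\bx}\braces{T_{(e,a)}\le t},
\]
the second term tends to $0$ as $x\to l_e$, while on $\{T_{(e,a)}>t\}$ the path stays in the part of edge $e$ beyond level $a$, where $\abs{f}$ is uniformly small once $a$ is chosen close enough to the open boundary, by definition of $C_0(\Gamma)$. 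Letting first $x\to l_e$ and then $a\to l_e$ would show that $P_t f$ vanishes at $(e,l_e)$; since this holds at every open boundary and $P_t f\in C_b(\Gamma)$, one concludes $P_t f\in C_0(\Gamma)$.

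For the converse I would suppose some open boundary $(e,l_e)$ is not natural, hence regular, exit, or entry. In the regular and entry cases the interior is accessible from the boundary, so a path started arbitrarily close to $(e,l_e)$ enters a fixed interior neighbourhood with probability bounded away from $0$; consequently $\liminf_{x\to l_e}P_t f(\bx)$ retains interior values of $P_t f$ and can be forced to be nonzero by a suitable choice of $f\in C_0(\Gamma)$, violating $C_0$-preservation. The exit case is handled by the analogous one-dimensional counterexample via the absorbed behaviour at the boundary. In each instance this is precisely the failure recorded in \cite[Theorem~1.1]{criens2023feller}, transported to $\Gamma$ by the same localization, so $P_t$ does not preserve $C_0(\Gamma)$ and $\mX$ is not Feller--Dynkin.

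I expect the main obstacle to lie in the converse direction: one must exhibit, for each non-natural boundary type, an explicit $f\in C_0(\Gamma)$ whose image $P_t f$ fails to vanish at the offending endpoint, and verify that the localization genuinely reduces each far endpoint to the corresponding one-dimensional boundary problem, with neither the junction vertex nor the remaining edges interfering. By contrast, the boundedness and continuity come for free from Proposition~\ref{prop_Feller_property}, and the \emph{if} direction is comparatively routine once the natural-boundary inaccessibility estimate from Proposition~\ref{prop_boundary_classification} is in hand.
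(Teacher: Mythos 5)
Your ``if'' direction is essentially the paper's argument in a different guise: you bound $\abs{P_t f(\bx)}$ by splitting on $\{T_{(e,a)}\le t\}$ and use that naturality forces $\Prob_{(e,x)}(T_{(e,a)}\le t)\to 0$ as $x$ approaches the boundary, which is equivalent to the Laplace-transform condition~\eqref{eq_FD_condition} that the paper extracts from \cite{criens2023feller} (the paper passes through $\Esp_{\bx}[e^{-a\tau_{\by}}]$ and a Chernoff bound, you use the hitting probability directly). That half is sound, and your observation that on $\{T_{(e,a)}>t\}$ the path cannot leave the portion of edge $e$ beyond level $a$ correctly justifies the edge-by-edge localization there.

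The genuine gap is in the ``only if'' direction. The paper does \emph{not} build counterexample functions for each non-natural boundary type: it shows abstractly that Feller--Dynkin implies~\eqref{eq_FD_condition}, by taking $g\in C_0(\Gamma)$ with $g(\by)=1$, using that the resolvent $R_\lambda g$ lies in $C_0(\Gamma)$, that $e^{-\lambda t}R_\lambda g(\mX_t)$ is a supermartingale, and applying optional stopping at $\tau_{\by}$ to get $R_\lambda g(e,x)\ge R_\lambda g(\by)\,\Esp_{(e,x)}\sqbraces{e^{-\lambda\tau_{\by}}}$; naturality then follows from \cite[Lemma~2.2]{criens2023feller} by localization. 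Your alternative --- forcing $\liminf_{x\to l_e}P_t f(\bx)>0$ because ``the interior is accessible from the boundary'' --- only has a chance for regular and entrance boundaries, where $\inf_{y}\Prob_{(e,y)}(T_{(e,a)}\le t)$ is bounded away from zero (and even there you still owe a strong-Markov/tightness step to convert ``hits $(e,a)$ by time $t$'' into ``$\mX_t$ lies in a fixed compact with probability bounded below''). For an \emph{exit} boundary the interior is \emph{inaccessible from the boundary} (Proposition~\ref{prop_boundary_classification}), so $\Prob_{(e,x)}(T_{(e,a)}\le t)\to 0$ and your ``retains interior values'' mechanism yields nothing; the one-sentence appeal to ``the analogous one-dimensional counterexample via the absorbed behaviour'' is not an argument, and it is precisely this case that the resolvent/optional-stopping route handles uniformly. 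A secondary omission: you skip the change of scale. The paper first passes to $s(\mX)$ (Proposition~\ref{prop_change_of_scale}), notes that Feller--Dynkin is preserved under this homeomorphism and that finite open boundaries of an NSE diffusion are automatically natural, so that every possibly non-natural open boundary sits at infinity where the $C_0$-vanishing statements and Lemma~\ref{lem_FD_condition} apply verbatim. Without that reduction you must separately explain why a finite non-natural open boundary obstructs $P_t(C_0(\Gamma))\subset C_0(\Gamma)$, which your argument does not do.
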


For the proof, we assume that $\mX $ is defined on the probability space
$(\Omega,\process{\bF_t},\Prob_{\bx}) $, such that $X_0 = \bx $, 
$\Prob_{\bx}$-almost surely.
We also consider the following condition:  
\begin{equation}
	\label{eq_FD_condition}
	\begin{aligned}
		\lim_{x \to \infty} \Esp_{(e,x)} \sqbraces{ e^{-\lambda \tau_{\by}}}
		&= 0,
		& \forall\,& \lambda > 0,
		& \forall\,& e \in E,
		& \forall\,& \by \in \Gamma.
	\end{aligned}
\end{equation}  
In the context of one-dimensional diffusions on natural scale, this condition is equivalent to either:  
\begin{enumerate}
	\item the process is Feller--Dynkin (see \cite[Lemma~2.1]{criens2023feller}), or  
	\item all infinite boundaries are natural (see \cite[Lemma~2.2]{criens2023feller}).  
\end{enumerate}

As intermediate result, we extend this equivalence to diffusions on \(\Gamma\). Specifically, we establish the following result:

\begin{lemma}
	\label{lem_FD_condition}
	Let \(\mX\) be a regular NSE diffusion on \(\Gamma\). Then:  
	\begin{enuroman}
		\item \(\mX\) is Feller--Dynkin if and only if condition~\eqref{eq_FD_condition} is satisfied. \label{item_FD_FD}  
		\item All infinite boundaries of \(\mX\) are natural if and only if condition~\eqref{eq_FD_condition} is satisfied. \label{item_FD_natural}  
	\end{enuroman}
\end{lemma}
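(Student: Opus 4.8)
The plan is to deduce both equivalences from a single reduction of the hitting-time Laplace transforms to a one-dimensional, per-edge problem, combined with the resolvent characterisation of the Feller--Dynkin property. Throughout I write $T_{\by}=\tau_{\by}$ for the hitting time of $\by$ and introduce the resolvent $R_\lambda f(\bx):=\Esp_{\bx}\sqbraces{\int_0^\infty e^{-\lambda t}f(\mX_t)\vd t}$. Since $\mX$ is already Feller by Proposition~\ref{prop_Feller_property}, it is Feller--Dynkin if and only if $R_\lambda\braces{C_0(\Gamma)}\subset C_0(\Gamma)$ for some (equivalently every) $\lambda>0$, by the Hille--Yosida--Ray correspondence between $C_0$-resolvents and Feller semigroups.

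For part~\ref{item_FD_FD}, I would argue both implications through $R_\lambda$. Assume first that $\mX$ is Feller--Dynkin and fix $\by\in\Gamma$ and $\lambda>0$. Choosing $f\in C_0(\Gamma)$ with $f\ge 0$ and $f(\by)=1$, the strong Markov property at $T_{\by}$ together with $f\ge0$ gives the one-sided bound $R_\lambda f(\bx)\ge \Esp_{\bx}\sqbraces{e^{-\lambda T_{\by}}}\,R_\lambda f(\by)$; since $R_\lambda f(\by)>0$ by regularity and $R_\lambda f(e,x)\to 0$ as $x\to\infty$ (because $R_\lambda f\in C_0(\Gamma)$), condition~\eqref{eq_FD_condition} follows. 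Conversely, assume~\eqref{eq_FD_condition} and fix $f\in C_0(\Gamma)$, $\lambda>0$ and an edge $e$. Given $\varepsilon>0$, pick $N$ with $|f(e,z)|<\varepsilon$ for $z>N$, and split at $T_{(e,N)}$ via the strong Markov property:
\begin{equation}
	R_\lambda f(e,x)=\Esp_{(e,x)}\sqbraces{\int_0^{T_{(e,N)}} e^{-\lambda t}f(\mX_t)\vd t}+\Esp_{(e,x)}\sqbraces{e^{-\lambda T_{(e,N)}}}\,R_\lambda f(e,N).
\end{equation}
For $x>N$, path-continuity and the star topology force $\mX_t$ to remain in the ray $\{(e,z):z>N\}$ for all $t<T_{(e,N)}$, so the first term is bounded in modulus by $\varepsilon/\lambda$, while the second tends to $0$ as $x\to\infty$ by~\eqref{eq_FD_condition}. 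Hence $\limsup_{x\to\infty}|R_\lambda f(e,x)|\le \varepsilon/\lambda$ for every $\varepsilon$, giving $R_\lambda f\in C_0(\Gamma)$ and the Feller--Dynkin property.

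For part~\ref{item_FD_natural}, I would first decouple~\eqref{eq_FD_condition} edge by edge. For $\by$ lying off the open ray issued from the starting point along $e$, the process started at $(e,x)$ must pass through an intermediate point, or through $\bv$, before reaching $\by$, so the strong Markov property factorises $\Esp_{(e,x)}\sqbraces{e^{-\lambda T_{\by}}}=\Esp_{(e,x)}\sqbraces{e^{-\lambda T_{\bv}}}\,\Esp_{\bv}\sqbraces{e^{-\lambda T_{\by}}}$, the last factor being a strictly positive constant by regularity (and similarly for same-edge targets). Consequently~\eqref{eq_FD_condition} holds if and only if $\lim_{x\to\infty}\Esp_{(e,x)}\sqbraces{e^{-\lambda T_{\bv}}}=0$ for every $\lambda>0$ and every edge $e$ of infinite length. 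Since $\mX$ is NSE, up to the exit time from the ray the distance-to-origin process behaves on each such edge like a one-dimensional regular diffusion on $[0,\infty)$ on natural scale with speed $\m_e$, and the displayed limit is exactly the one-dimensional condition~\eqref{eq_FD_condition}, which by \cite[Lemma~2.2]{criens2023feller} is equivalent to the boundary at infinity of $e$ being natural. Taking the conjunction over all infinite edges yields~\ref{item_FD_natural}.

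The main obstacle is the Feller--Dynkin direction of part~\ref{item_FD_FD}: one must upgrade the pointwise vanishing of hitting-time transforms to vanishing of $R_\lambda f$ at infinity, and then promote the resolvent statement to the semigroup statement $P_t\braces{C_0(\Gamma)}\subset C_0(\Gamma)$. The splitting above resolves the first point because, by the star-graph topology and path-continuity, an excursion started far out on edge $e$ cannot reach $\bv$ or any other edge without first crossing $(e,N)$, which confines it to the region where $f$ is uniformly small; the second point is the standard Hille--Yosida--Ray theory. The decoupling in part~\ref{item_FD_natural} is then routine, since the same topological confinement reduces every target to the single vertex per edge and hence to the known one-dimensional dichotomy.
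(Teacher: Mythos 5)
Your proposal is correct, and for the necessity direction of~\ref{item_FD_FD} and for all of~\ref{item_FD_natural} it follows essentially the paper's own path: the strong-Markov/optional-stopping lower bound $R_\lambda f(\bx)\ge \Esp_{\bx}[e^{-\lambda T_{\by}}]\,R_\lambda f(\by)$ with $R_\lambda f\in C_0(\Gamma)$ gives~\eqref{eq_FD_condition}, and the per-edge reduction (off-edge targets dominated by $T_{\bv}$, same-edge targets handled by localization to a one-dimensional natural-scale diffusion and \cite[Lemma~2.2]{criens2023feller}) gives~\ref{item_FD_natural}. The genuine divergence is in the sufficiency direction of~\ref{item_FD_FD}. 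You stay at the resolvent level, splitting $R_\lambda f$ at $T_{(e,N)}$ and using the topological confinement of the pre-$T_{(e,N)}$ path to the outer part of the ray, and then invoke the Hille--Yosida--Ray correspondence to pass from $R_\lambda(C_0(\Gamma))\subset C_0(\Gamma)$ back to $P_t(C_0(\Gamma))\subset C_0(\Gamma)$. The paper instead works directly with the semigroup: choosing $z$ with $|f|<\varepsilon$ outside $B(\bv,z)$, splitting $\Esp_{(e,x)}[|f(\mX_t)|]$ on $\{X_t\ge z\}$ and $\{X_t<z\}$, and bounding $\Prob_{(e,x)}(X_t<z)\le\Prob_{(e,x)}(\tau_{(e,z)}<t)$ by a Markov/Chernoff estimate in terms of $\Esp_{(e,x)}[e^{-\lambda\tau_{(e,z)}}]$, which vanishes under~\eqref{eq_FD_condition}; this avoids any resolvent-to-semigroup transfer. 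Your route is valid, but the only delicate point is hidden in the phrase ``standard Hille--Yosida--Ray theory'': to identify the Hille--Yosida semigroup generated by the restricted resolvent with $(P_t)$ on $C_0(\Gamma)$ one must verify that $R_\lambda(C_0(\Gamma))$ is dense in $C_0(\Gamma)$ (via $\lambda R_\lambda f\to f$ pointwise and boundedly, hence weakly, hence in norm on the $\lambda$-independent range) and then use uniqueness of Laplace transforms together with the right-continuity of $t\mapsto P_t f(\bx)$. If you spell that out or cite it precisely, your argument is complete; the paper's direct semigroup estimate is the more economical alternative.
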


Since the diffusion is regular, from Proposition~\ref{prop_Feller_property}, it is also Feller.
Hence, to prove Feller--Dynkin property, it is enough to show that
\begin{equation}
	\begin{aligned}
		\lim_{x\to \infty} P_t f(e,x) &= 0,
		& \forall\, e&\in E. 
	\end{aligned}
\end{equation} 

The proof of part~\ref{item_FD_FD} involves a straightforward adaptation of the argument in \cite[Lemma~2.1]{criens2023feller}. Part~\ref{item_FD_natural}, on the other hand, is a direct consequence of \cite[Lemma~2.2]{criens2023feller}.

\begin{proof}
	We first prove~\ref{item_FD_FD}.
	We assume that $\mX $ is a Feller--Dynkin process.
	Let $\by = (e,y) \in \Gamma $ and $g$ a function in $C_0(\Gamma) $
	such that $ g(\Gamma) \subset [0,1]$ and $g(y)=1 $.
	Let $R_{\lambda}g $ be the function defined as 
	\begin{equation}
		R_{\lambda}g = \int_{0}^{\infty} e^{-\lambda s} P_s g \vd s. 
	\end{equation}
	We have that $R_{\lambda}g \in C_0(\Gamma) $ (see~\cite[Theorem~17.4]{Kal}),
	and $(e^{-\lambda t} R_{\lambda}(\mX_t);\, t\ge 0) $ is a
	$\Prob_{\bx} $ supermartingale, for all $\bx \in \Gamma $ (see~\cite[Proposition~III.2.6]{RevYor}). 
	Moreover, $t \mapsto P_t $ is continuous at the origin and 
	$R_{\lambda} g(\by) >0 $.
	From optional stopping,
	\begin{equation}
		R_{\lambda} g(e,x) \ge 
		\Esp_{\bx} \sqbraces{ e^{-\lambda \tau_{\by}} R_{\lambda} g(\mX_{\tau_{\by}}) \indicB{\tau_{\by}< \infty} }
		= R_{\lambda}g(\by) \Esp_{(e,x)} \sqbraces{ e^{-\lambda \tau_{\by}} }.
	\end{equation}
	Since $R_{\lambda} g $ is $C_0(\Gamma) $, sending $x\to \infty $ results in~\eqref{eq_FD_condition}. 
	
	Let's assume that~\eqref{eq_FD_condition} holds.
	By virtue of \cite[Proposition~III.2.4]{RevYor}, it suffices to show that
	$P_t f $ vanishes at infinity, for every $f\in C_0(\Gamma) $ and $t>0 $. 
	From the Markov inequality, for $y< x $, $\bx = (e,x) $ and $\by = (e,y) $, 
	\begin{equation}
		\label{eq_proof_Markov_inq}
		\Prob_{\bx} \braces{X_t < y}
		\le \Prob_{\bx} \braces{\tau_{\by} < t}
		\le e^{a^{2}} \Esp_{\bx} \sqbraces{e^{-a \tau_{\by}}}. 
	\end{equation}
	Let $z >0$ be chosen such that $f(e,y) < \varepsilon $
	for all $e\in E $ and all $y> z $. 
	For such $z$, from~\eqref{eq_proof_Markov_inq},
	\begin{align}
		|P_t f(e,x)| 
		&\le \Esp_{(e,x)} \sqbraces{|f(\mX_t)| \indicB{X_t \ge z} }
		+ \Esp_{(e,x)} \sqbraces{|f(\mX_t)| \indicB{X_t < z} }
		\\
		&\le \varepsilon + \xnorm{f}{\infty} \Prob_{(e,x)} \braces{X_t < z}
		\le \varepsilon +  e^{a^{2}} \xnorm{f}{\infty} \Esp_{\bx} \sqbraces{e^{-a \tau_{\by}}}, 
	\end{align}
	which from~\eqref{eq_FD_condition} converges to $\varepsilon $ as $x\to \infty $.
	This means that $P_t f(e,x) \to 0$ as $x\to \infty $.
	This completes the proof of~\ref{item_FD_FD}.
	
	We now prove~\ref{eq_FD_condition}, which is direct from the local character of a diffusion.
	Indeed, we first observe that it suffices to prove that
	\begin{equation}
		\begin{aligned}
			\lim_{x\to \infty} \Esp_{(e,x)} \sqbraces{ e^{-\lambda \tau_{(e,y)}}}
			&= 0,
			& \forall\,& \lambda >0,
			& \forall\,& e \in E,
			& \forall\, y &\ge 0.
		\end{aligned}
	\end{equation}
	The reason is that
	\begin{equation}
		\begin{aligned}
			\Esp_{(e,x)} \sqbraces{ e^{-\lambda \tau_{(e',y)}}}
			&\le
			\Esp_{(e,x)} \sqbraces{ e^{-\lambda \tau_{\bv}}},
			& \forall\, \lambda&>0, 
			& \forall\, e' &\not = e,
			& \forall\, y &>0.
		\end{aligned}
	\end{equation}
	By localization, the law of the process $\mX $ started at $(x,e) $ and stopped at $(y,e)$, with $0\le y < x $, is the one of a stopped
	one dimensional diffusion.
	Hence, assertion~\ref{item_FD_natural} follows from \cite[Lemma~2.2]{criens2023feller}.
	This completes the proof.
\end{proof}

\begin{proof}
	[Proof of Proposition~\ref{prop_FD}]
	We first extend the observation in the opening  
	of~\cite[Section~2]{criens2023feller} to $\mX $. 
	The function $s $ is a homeomorphism from $\Gamma $ to $s(\Gamma) $ and
	from Proposition~\ref{prop_change_of_scale}, $s(\mX) $ is a regular
	NSE diffusion on $\Gamma^{*} := s(\Gamma) $. 
	We observe that $f\in C_0(\Gamma) \Rightarrow f \circ q \in C_0 (\Gamma^{*}) $
	and that $f\in C_0(\Gamma^{*}) \Rightarrow f \circ s \in C_0 (\Gamma) $. 
	Hence $\mX $ is Feller--Dynkin if and only if $s(\mX) $ is Feller--Dynkin. 
	
	We also recall that for a one dimensional diffusion on natural scale,
	every finite open boundary is natural (see \cite[Proposition~16.45]{breiman1992probability}).
	By localization, this holds also for any NSE diffusion on $\Gamma $. 
	
	With the above, the proof follows from the two elements of Lemma~\ref{lem_FD_condition}
	and consists in a change of scale. 
\end{proof}

%% New Appendix Section
\section{Proof of Proposition~\ref{prop_strict_monotony}}
\label{app_strict_monotony}

For the reader's convenience, we restate the result. 

\begin{propositionOhne}
	Let \(\mX = (I, X)\) be a regular NSE diffusion on \(\Gamma\), \(\qv{X}\) be the quadratic variation of \(X\), and \(\varphi\) the right-inverse of \(t \mapsto \qv{X}_t\) (see~\eqref{eq_def_right_inverse} for the definition of right-inverse). 
	The mappings \(t \mapsto \qv{X}_t\) and \(t \mapsto \varphi(t)\) are almost surely strictly increasing. 
	Consequently, \(\qv{X}\) and \(\varphi\) are proper inverses, i.e., 
	\[
	\varphi(\qv{X}_t) = \qv{X}_{\varphi(t)} = t, \quad \text{for all } t \ge 0.
	\]
\end{propositionOhne}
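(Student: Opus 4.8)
The plan is to derive the strict monotonicity of $t\mapsto\qv{X}_t$ from the elementary fact that a regular diffusion never rests at a single point, and then to read off the assertions about $\varphi$ from the standard theory of inverses of continuous increasing functions. \emph{Step 1 (every point is instantaneous).} I would first show that for every $\bx\in\Gamma$ the first exit time $R_{\bx}:=\inf\{t>0:\mX_t\neq\bx\}$ vanishes $\Prob_{\bx}$-almost surely. For an interior point $\bx=(e,x)$, take $a=x-\varepsilon$ and $b=x+\varepsilon$; since $\mX$ is NSE we have $\s_e(y)=y$, so $G^{(e)}_{a,b}(x,\cdot)\le b-a=2\varepsilon$, and~\eqref{eq_scalespeed} gives $\Esp_{\bx}[T_a\wedge T_b]=2\int_a^b G^{(e)}_{a,b}(x,y)\,\m_e(\rd y)\le 4\varepsilon\,\m_e([a,b])\to 0$ as $\varepsilon\to 0$, the limit being $0$ because $\m_e$ is locally finite (note this holds even when $\m_e$ charges $x$). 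As $R_{\bx}\le T_a\wedge T_b$, we obtain $\Esp_{\bx}[R_{\bx}]=0$, hence $R_{\bx}=0$. At the vertex the same conclusion follows from $H_{\delta}(\bv)\to 0$ as $\delta\to 0$, which is part of Proposition~\ref{prop_node_infinitesimal}\ref{item_node_sticky}.

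\emph{Step 2 (no flat interval of $X$).} Using the strong Markov property, for fixed rationals $r<r'$ one has $\Prob(\mX_u=\mX_r\text{ for all }u\in[r,r'])=\Esp\big[\Prob_{\mX_r}(R_{\mX_r}\ge r'-r)\big]=0$ by Step~1; a countable union over rational pairs shows that, almost surely, $X$ is constant on no nondegenerate interval. Next I would use that $X$ is a semimartingale (Lemma~\ref{thm_graph_DDS}\ref{item_semimartingale}), so the Tanaka formula~\cite[Theorem~VI.1.2]{RevYor} yields $X=X_0+Z+\tfrac12\loct{X}{0}{}$ with $Z$ a continuous local martingale, $\qv{X}=\qv{Z}$, and $\rd\loct{X}{0}{}$ carried by $\{X=0\}$. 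Crucially, Lemma~\ref{thm_graph_DDS} does not rely on the present proposition, so no circularity arises. If $\qv{X}$ were constant on some interval $[a,b]$, then $\qv{Z}$ would be constant there, forcing the continuous local martingale $Z$ to be constant on $[a,b]$; hence $X_t=X_a+\tfrac12(\loct{X}{0}{t}-\loct{X}{0}{a})$ is non-decreasing on $[a,b]$, and since $\loct{X}{0}{}$ increases only on $\{X=0\}$ one has $\rd X=0$ on $\{X>0\}$. A short argument (distinguishing $X_a>0$, $X\equiv 0$, and the existence of a last zero) then forces $X$ to be constant on $[a,b]$, contradicting the first part of this step. Therefore $t\mapsto\qv{X}_t$ is almost surely strictly increasing.

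\emph{Step 3 (the inverse) and the main difficulty.} Being continuous and strictly increasing with $\qv{X}_0=0$, the map $t\mapsto\qv{X}_t$ is a homeomorphism onto its range, so its right-inverse $\varphi$ from~\eqref{eq_def_right_inverse} coincides with its genuine inverse; in particular $\varphi$ is strictly increasing and $\varphi(\qv{X}_t)=\qv{X}_{\varphi(t)}=t$ for all $t\ge 0$. I expect the main obstacle to be Step~1 together with the circularity bookkeeping: one must keep the exit-time estimate uniform as the window shrinks \emph{in the sticky regime}, where $\m_e$ may place an atom at $\bx$ or $\rho>0$ at $\bv$, and one must verify that every ingredient used (the semimartingale property and Tanaka's decomposition) is established without appealing to the time-change identities that Proposition~\ref{prop_strict_monotony} is itself designed to justify.
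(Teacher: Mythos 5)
Your proof is correct, and its overall skeleton coincides with the paper's: both reduce strict monotonicity of $\qv{X}$ to the absence of flat intervals of $X$ via the decomposition $X = X_0 + Z + \frac{1}{2}\loct{X}{0}{}$ with $Z=\int\indicB{X_s>0}\vd X_s$ a continuous local martingale, the identity $\qv{X}=\qv{Z}$, and the fact that a continuous local martingale is constant exactly where its bracket is; this is precisely the content of the paper's Lemma~\ref{lem_qv_null}, and your ``short argument'' distinguishing $X_a>0$ from $X_a=0$ is the same case analysis carried out there. Where you genuinely diverge is in ruling out flat intervals of $X$. The paper argues softly: it localizes to a relatively compact neighbourhood, invokes the Feller--Dynkin property (Proposition~\ref{prop_FD}) and Blumenthal's zero--one law applied to the hitting times of a sequence $\bx_n\to\bx$, and concludes that the starting point is left instantaneously. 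You instead obtain instantaneity quantitatively from the Green-function bound $\Esp_{\bx}\sqbraces{T_{\ba}\wedge T_{\bb}}\le 2(b-a)\,\m_e((a,b))\to 0$ issued from~\eqref{eq_scalespeed}, together with Proposition~\ref{prop_node_infinitesimal}\,\ref{item_node_sticky} at the vertex; this is more elementary, makes the role of local finiteness of $\m_e$ (including a possible atom at the point) explicit, and avoids the somewhat delicate Blumenthal step. Your circularity bookkeeping is also sound: the semimartingale property and the local-martingale decomposition in Lemma~\ref{thm_graph_DDS} are established without appealing to the present proposition, and the same decomposition is what the paper itself uses in Lemma~\ref{lem_qv_null}. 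The only point worth spelling out is the passage from ``$\mX$ is constant on no nondegenerate interval'' (which is what the strong Markov property over rational pairs delivers) to ``$X$ is constant on no nondegenerate interval'': this requires the remark that if $X$ is constant on an interval then so is $\mX$, since by continuity the edge coordinate can only change by passing through the vertex.
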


To prove the result we will first need a preliminary lemma that extends  \cite[Proposition~IV.1.13]{RevYor} to other processes that are not local martingales.

\begin{lemma}
	\label{lem_qv_null}
	Let $\mX = (I,X) $ be a regular NSE diffusion on $\Gamma $. 
	For almost all $\omega \in \Omega $,
	\begin{equation}
		\begin{aligned}
			X_u(\omega) &= X_s(\omega), 
			& \forall\,& u\in [s,t]
			& \Leftrightarrow&
			& \qv{X}_t(\omega) &= \qv{X}_s (\omega).
		\end{aligned}
	\end{equation}
\end{lemma}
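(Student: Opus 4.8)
The plan is to reduce the statement to the analogous fact for continuous local martingales, \cite[Proposition~IV.1.13]{RevYor}, through the semimartingale decomposition of $X$ established in Lemma~\ref{thm_graph_DDS}. Recall from the proof of that lemma that, because $\mX$ is NSE, the distance-to-origin process satisfies
\[
X_t = X_0 + Z_t + \frac{1}{2}\loct{X}{0}{t}, \qquad Z_t := \int_0^t \indicB{X_s>0}\vd X_s,
\]
where $Z$ is a continuous local martingale and $\loct{X}{0}{}$ is the local time of $X$ at $0$; since $\loct{X}{0}{}$ has finite variation, $\qv{X}=\qv{Z}$.

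The forward implication is immediate: if $X$ is constant on $[s,t]$, then every increment of $X$ across a partition of $[s,t]$ vanishes, so $\qv{X}$ cannot grow there and $\qv{X}_t=\qv{X}_s$. For the converse, assume $\qv{X}_t(\omega)=\qv{X}_s(\omega)$. Since $\qv{X}=\qv{Z}$, applying \cite[Proposition~IV.1.13]{RevYor} to $Z$ shows that $Z$ is constant on $[s,t]$, outside a single null set independent of $s,t$. Hence $X_u-X_s=\frac{1}{2}\braces{\loct{X}{0}{u}-\loct{X}{0}{s}}\ge 0$ for every $u\in[s,t]$, so $X$ is nondecreasing on $[s,t]$ and only the finite-variation term can produce growth.

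The crux, and the only genuine difficulty, is converting constancy of the martingale part $Z$ into constancy of $X$; here I would use that $\loct{X}{0}{}$ increases only when $X=0$ (as already noted in the proof of Lemma~\ref{thm_graph_DDS}) and argue by cases on $X_s$. If $X_s>0$, set $\tau:=\inf\{u\ge s:X_u=0\}$; on $[s,\tau\wedge t]$ the process stays positive, so its local time is frozen and $X_u=X_s$, and continuity of $X$ at $\tau$ forces $\tau>t$, giving $X\equiv X_s$ on $[s,t]$. If instead $X_s=0$ and, for contradiction, $X_{u_0}>0$ for some $u_0\in(s,t]$, set $\sigma:=\sup\{u\le u_0:X_u=0\}\ge s$; then $\loct{X}{0}{}$ is frozen on $(\sigma,u_0]$, while $X_\sigma=0$ forces $\loct{X}{0}{\sigma}=\loct{X}{0}{s}$, so $X_{u_0}=\frac{1}{2}\braces{\loct{X}{0}{u_0}-\loct{X}{0}{s}}=0$, a contradiction. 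Collecting the countably many exceptional events (from the decomposition, from \cite[Proposition~IV.1.13]{RevYor} applied to $Z$, and from the support of $\vd\loct{X}{0}{}$) into one null set yields the equivalence simultaneously for all $s\le t$. I expect this local-time bookkeeping to be the main obstacle: in contrast to the pure local-martingale setting, constancy of $Z$ does not by itself transfer to $X$.
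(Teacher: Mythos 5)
Your proof is correct and follows essentially the same route as the paper: the decomposition $X=X_0+Z+\tfrac12\loct{X}{0}{}$ with $\qv{X}=\qv{Z}$, followed by \cite[Proposition~IV.1.13]{RevYor} applied to $Z$ and a pathwise transfer of constancy between $Z$ and $X$. The only difference is in that transfer step, where you use the first hitting time of $0$ and the support of $\vd\loct{X}{0}{}$ instead of the paper's bounded-variation argument for the case $X_s>0$; both versions are sound, and yours is arguably the more elementary.
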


\begin{proof}
	Inspired by arguments in the proof of Lemma~\ref{thm_graph_DDS}, we define the local martingale $Z $ as
	\begin{equation}
		\begin{aligned}
			Z_t &:= X_0 + \int^{t}_{0} \indicB{X_s>0} \vd X_s = X_t - \frac{1}{2} \loct{X}{0}{t},
			& t&\ge 0.
		\end{aligned}
	\end{equation}
	
	We first observe that $\qv{Z} = \qv{X} $.
	Since $Z$ is a local martingale, from \cite[Proposition~IV.1.13]{RevYor}, for almost all $\omega \in \Omega$, the mapping $t \mapsto Z_t(\omega)$ is constant on $[a, b]$ if and only if the mapping $t \mapsto \qv{Z}_t(\omega)$ is constant on $[a, b]$. 
	
	We also observe that almost surely,  
	\begin{equation}
		\cubraces{ X_u = X_s;\; \forall\, u \in [s, t] }
		= \cubraces{ Z_u = Z_s;\; \forall\, u \in [s, t] }.
	\end{equation}
	This is justified by the following. 
	\begin{itemize}
		\item If $X_s>0 $ and $X$ is constant on $[s,t] $, then 
		$\loct{X}{0}{t} - \loct{X}{0}{s} = 0 $. Consequently, $Z_u - Z_s = X_u - X_s $, for all $u\in [s,t] $.
		\item If $X_s>0 $ and $Z$ is constant on $[s,t] $, then 
		$ \vd X_u  = \frac{1}{2} \vd \loct{X}{0}{u} $ for all $u\in [s,t] $.
		This means that $ X$ is of bounded variation on $[s,t] $.
		Since it is locally a regular one-dimensional diffusion on natural scale, 
		necessarily $X $ is constant on $[s,t] $. 
		Consequently, $Z_u - Z_s = X_u - X_s $, for all $u\in [s,t] $.
		\item If $x=0 $ and either $X$ or $Z$ is constant, then
		\begin{equation}
			\loct{X}{0}{u} - \loct{X}{0}{s} 
			=
			\lim_{\delta \to 0} \int^{u}_{s} \indicB{X_{\zeta} \in [0,\delta)} \vd \qv{X}_\zeta
			=
			\lim_{\delta \to 0} \int^{u}_{s} \indicB{X_{\zeta} \in [0,\delta)} \vd \qv{Z}_\zeta
			= 0.
		\end{equation}
		Consequently, $Z_u - Z_s = X_u - X_s $ for all $u\in [s,t] $.
	\end{itemize}
	Combining the above completes the proof.
\end{proof}

\begin{proof}
	[Proof of Proposition~\ref{prop_strict_monotony}]
	We first analyze the mapping \( t \mapsto \qv{X}_t \). By Lemma~\ref{lem_qv_null}, we have
	\begin{equation}
		\label{eq_condition_null_qv2}
		\Prob_{\bx} \braces{ \qv{X}_t = \qv{X}_s } = \Prob_{\bx} \braces{ X_u = X_s;\; \forall\, u \in [s, t] }.
	\end{equation}
	From the Markov property,
	\begin{equation}
		\Prob_{\bx} \braces{ \qv{X}_t = 0 } = \Prob_{\bx}  \braces{ X_u = x;\; \forall\, u \in [0, t] } = 0,\quad \text{for all } t > 0 \text{ and } \bx = (e,x) \in \Gamma.
	\end{equation}
	
	Assume, for contradiction, that the mapping $t \mapsto \qv{X}_t $ is not almost surely strictly increasing. 
	Then, there exists $t > 0$ and $\bx \in \Gamma$ such that
	\begin{equation}
		\label{eq_proof_assumption_0}
		\Prob_{\bx} \braces{ \qv{X}_t = 0 } = \Prob_{\bx} \braces{ X_u = x;\; \forall\, u \in [0, t] } > 0.
	\end{equation}
	
	Let \( U \) be a relatively compact open set in \( \Gamma \) containing \( \bx \), and define the stopping time \( T_{U^{c}} := \inf\{t \geq 0:\; X_t \notin U\} \). Consider the stopped process \( X^U := (X_{t \wedge T_{U^c}};\, t \geq 0) \). By the continuity of \( X \) and \( \qv{X} \), we have
	\begin{equation}
		\label{eq_proof_assumption_0_stopped}
		\Prob_{\bx} \braces{ \qv{X^U}_t = 0 } = \Prob_{\bx} \braces{ X^U_u = x;\; \forall\, u \in [0, t] } > 0.
	\end{equation}
	
	By Proposition~\ref{prop_FD}, for sufficiently small \( U \), the process \( X^U \) is a Feller--Dynkin process. Now, consider a sequence \( (\bx_n)_n \) in \( \Gamma \) such that \( \lim_{n \to \infty} \bx_n = \bx \). Define the stopping times
	\[
	T_n := \inf\{t \geq 0:\; \mX_t = \bx_n\} \wedge T_{U^{c}}, \quad n \in \IN.
	\]
	Since \( X^U \) is Feller--Dynkin, Blumenthal's zero-one law (see \cite[Corollary~17.18]{Kal}) implies that \( T_n \to 0 \) almost surely as \( n \to \infty \). Consequently, \( T_n \to 0 \) in probability as well. This contradicts \eqref{eq_proof_assumption_0_stopped}, and hence \eqref{eq_proof_assumption_0} cannot hold. Therefore, \( t \mapsto \qv{X}_t \) is almost surely strictly increasing. 
	
	Regarding the mapping \(t \mapsto \varphi(t)\), it is the right-inverse of the almost surely continuous and increasing process \(\qv{X}\). 
	Consequently, \(\varphi\) is almost surely strictly increasing. 
	This completes the proof.
\end{proof}

%% New Appendix Section
\section{Solution to Dirichlet problem and stickiness}
\label{app_Dirichlet}

In this section, we prove two results. 
First, we establish a generalized second order operator version of~\cite[Proposition~A.1]{berry2024sticky}, namely the existence, uniqueness, and Green function for the corresponding Dirichlet problem on the unit disc. 
Based on this result, we deduce the asymptotic behavior of the solution at the junction vertex $\bv$, using its explicit form. 
These results are leveraged to prove Proposition~\ref{prop_node_infinitesimal}, which characterizes the behavior of diffusion at $\bv $. 

\begin{proposition}
	\label{prop_Dirichlet_solution}
	For all $\delta\in (0,l_*)$ and $f \in C_b(\Gamma) $, we consider the
	Dirichlet problem
	\begin{equation}
		\label{eq_boundary_value_problem}
		\begin{cases}
			\frac{1}{2}\D_{\m_{e}} \D_{\s_e} u(e,x) = f(e,x), &\forall\, x \in (0,\delta),\quad \forall\, e\in E,\\
			\rho \Lop_e u(e,0) = \sum_{e'\in E} \beta_{e'} u_{e'}'(0),
			&\forall\, e\in E,\\
			u(e,\delta) = 0, &\forall\, e\in E, \\
			u(e,0) = u(e',0), &\forall\, e,e'\in E. 
		\end{cases}
	\end{equation}
	\begin{enuroman}
		\item \label{item_existence} A solution to~\eqref{eq_boundary_value_problem} is
		\begin{equation}
			\label{eq_Dirichlet_sol_form}
			\begin{aligned}
				u(e,x)& := b_e(x)  -b_e(\delta) - \frac{\rho f(\bv)}{ \s'_e(0)} \braces{\s_e(x) -  \s_e(\delta)},
			\end{aligned}
		\end{equation} 
		where $(b_e;\;e\in E) $ is the family of functions 
		defined as 
		\begin{equation}
			\begin{aligned}
				b_e(x) &:= 2 \int_{(x,\delta)} \int_{(0,y)} f_{e}(\zeta) \,\m_{e}(\rd \zeta) \, \s_e(\rd y),
				& \forall\,&e\in E,
				& \forall\,&x \in [0,\delta].
			\end{aligned}
		\end{equation}
		\item \label{item_uniqueness} The problem~\eqref{eq_boundary_value_problem} has a unique solution in 
		$\bigoplus_{e\in E}C^{\Lop_{e}}([0,\delta]) $.
	\end{enuroman}
\end{proposition}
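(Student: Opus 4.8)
The plan is to read \eqref{eq_boundary_value_problem} as a system of $\card E$ decoupled one-dimensional two-point problems for the generalized operators $\frac12\D_{\m_e}\D_{\s_e}$ on $(0,\delta)$, coupled only through the two vertex requirements (continuity and the gluing relation). The starting point for \ref{item_existence} is that each edgewise equation integrates explicitly: integrating $\D_{\m_e}\D_{\s_e}u_e=2f_e$ first against $\m_e$ and then against $\s_e$ shows that every solution on an edge is a particular solution plus an $\Lop_e$-harmonic function, the latter being affine in the scale, of the form $C_e+D_e\s_e(x)$. The candidate in \eqref{eq_Dirichlet_sol_form} is precisely of this shape, with the double integral $b_e$ supplying the particular part and the term in $\s_e(x)-\s_e(\delta)$ supplying the harmonic correction. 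I would then check the four requirements of \eqref{eq_boundary_value_problem} in turn: (a) the differential equation, by computing $\D_{\s_e}b_e$ and $\D_{\m_e}\D_{\s_e}b_e$ through the fundamental theorem of calculus for these operators and noting that the scale-affine correction is annihilated by $\D_{\m_e}\D_{\s_e}$; (b) the Dirichlet condition $u(e,\delta)=0$, immediate since both $b_e(x)-b_e(\delta)$ and $\s_e(x)-\s_e(\delta)$ vanish at $x=\delta$; (c) continuity $u(e,0)=u(e',0)$ at $\bv$, using $\s_e(0)=0$; and (d) the gluing relation, where the key point is that $\Lop_e u(e,0)=f_e(0)=f(\bv)$ is common to all edges, so that the $\s_e$-affine correction is tuned so that $\sum_{e'}\beta_{e'}u'_{e'}(0)$ reproduces $\rho\,\Lop_e u(e,0)$, the particular part $b_e$ contributing nothing to $u'_e(0)$ because its scale-derivative vanishes at $0$, and $\sum_{e'}\beta_{e'}=1$ by \ref{item_A2}.

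For \ref{item_uniqueness} I would argue by linearity. If $u,v$ both solve \eqref{eq_boundary_value_problem}, then $w:=u-v$ solves the homogeneous problem ($f\equiv0$). On each edge $w_e$ is $\Lop_e$-harmonic, hence $w_e(x)=c+D_e\s_e(x)$, where continuity at $\bv$ forces the vertex value $c=w_e(0)$ to be the same for every $e$. The Dirichlet condition $w_e(\delta)=0$ gives $D_e=-c/\s_e(\delta)$, and since $\Lop_e w(e,0)=0$ the gluing condition collapses to
\begin{equation}
0=\sum_{e'\in E}\beta_{e'}w'_{e'}(0)=-c\sum_{e'\in E}\frac{\beta_{e'}\s'_{e'}(0)}{\s_{e'}(\delta)}.
\end{equation}
Under \ref{item_A2} every $\beta_{e'}>0$, and $\s'_{e'}(0)>0$, $\s_{e'}(\delta)>0$ by Assumption~\ref{assumption_regularity}, so the sum is strictly positive; hence $c=0$, then every $D_e=0$, and $w\equiv0$. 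The same computation run with $f$ present (solving the resulting linear system for the common vertex value and the edge slopes $D_e$, whose coefficient matrix is invertible thanks to this same strict positivity) is in fact the cleanest way to \emph{derive} the explicit solution and to confirm it is the unique element of $\bigoplus_{e\in E}C^{\Lop_e}([0,\delta])$ satisfying the problem.

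I expect the main obstacle to be the vertex analysis under the low regularity of $\bigoplus_{e\in E}C^{\Lop_e}([0,\delta])$: the quantities $\Lop_e u(e,0)$ and $u'_e(0)$ must be interpreted as one-sided objects (the right $\s_e$-derivative and the right $\m_e$-$\s_e$-derivative), so one must confirm that these exist, that membership in $C^{\Lop_e}$ genuinely holds for the constructed function, and that the particular part carries no slope at $0$ so that the whole gluing relation is borne by the harmonic correction. The edgewise ODE theory for generalized operators and the strict positivity of $\sum_{e'\in E}\beta_{e'}\s'_{e'}(0)/\s_{e'}(\delta)$ are the two facts that make both existence and uniqueness go through; everything else is bookkeeping of constants.
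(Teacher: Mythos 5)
Your overall strategy for part~\ref{item_existence} coincides with the paper's (edgewise integration of $\D_{\m_e}\D_{\s_e}u_e=2f_e$, a scale-affine harmonic correction, then verification of the four conditions), and your uniqueness argument is a legitimate alternative to the paper's: where the paper runs a maximum-principle argument on the scale-affine solutions of the homogeneous problem, you solve the linear system explicitly and conclude from the strict positivity of $\sum_{e'}\beta_{e'}\s'_{e'}(0)/\s_{e'}(\delta)$. Both routes rest on the same implicit hypothesis that the right-derivatives $\s'_e(0)$ exist and are positive (the paper divides by $\s'_e(0)$ in~\eqref{eq_Dirichlet_sol_form}, so it assumes this as well); granting that, your version is arguably cleaner and, as you observe, simultaneously produces the solution.

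There is, however, a genuine gap at step (c) of your existence check. Vertex continuity is \emph{not} immediate from $\s_e(0)=0$: since $b_e(\delta)=0$, the candidate~\eqref{eq_Dirichlet_sol_form} gives
\begin{equation}
	u(e,0) \;=\; b_e(0) + \frac{\rho f(\bv)}{\s'_e(0)}\,\s_e(\delta),
	\qquad
	b_e(0)=2\int_{(0,\delta)}\int_{(0,y)} f_e(\zeta)\,\m_e(\rd\zeta)\,\s_e(\rd y),
\end{equation}
and $b_e(0)$ genuinely depends on the edge. Already for $f\equiv 1$, $\rho=0$, $\s_{e_1}(x)=\s_{e_2}(x)=x$, $\m_{e_1}=\rd x$ and $\m_{e_2}=2\,\rd x$, one gets $u(e_1,0)=\delta^{2}\neq 2\delta^{2}=u(e_2,0)$, so the displayed formula does not satisfy the fourth line of~\eqref{eq_boundary_value_problem}. (The paper's own proof shares this lacuna: it uses the Dirichlet and gluing conditions to fix $B_e$ and $A_e$ and never returns to check $u(e,0)=u(e',0)$.) The repair is exactly your closing remark: keep $A_e$ free, impose continuity $b_e(0)+B_e=c$ together with $B_e=-A_e\s_e(\delta)$, which forces $A_e=(b_e(0)-c)/\s_e(\delta)$, and then determine the common vertex value $c$ from the single gluing equation; the resulting $A_e$ is not $-\rho f(\bv)/\s'_e(0)$ in general. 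Carrying out the computation in your step (a) would also have shown that $\tfrac12\D_{\m_e}\D_{\s_e}b_e=-f_e$, so the candidate solves $\Lop u=-f$ rather than $\Lop u=f$; this sign slip is likewise present in the paper and is harmless once tracked, but a complete proof cannot wave either point through.
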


\begin{proof}
	Proof of~\ref{item_existence}.
	We consider the general form of the solution
	\begin{equation}
		u(e,x) = b_e(x) + A_e \s_e(x) + B_e,
	\end{equation}
	with $(A_e;\, e\in E) $, $(B_e;\, e\in E) $ two families of
	real constants to be determined.
	
	For $x\in (0,\delta) $ and $e\in E $, we have that 
	\begin{equation}
		\Lop_e u(e,x) = \frac{1}{2} \D_{\m_e} \D_{\s_e} u(e,x)
		= - \frac 12 \D_{\m_e} \braces{ 2 \int_{(0,x)} f_{e}(\zeta) \,\m_{e}(\rd \zeta) }
		= - f_e(x).
	\end{equation}
	
	The boundary conditions (for $x=\delta $) yields that
	\begin{equation}
		B_e := -b_e(\delta) - A_e \s_e(\delta),
	\end{equation}
	and therefore
	\begin{equation}
		u(e,x) = b_e(x) + A_e \s_e(x) -b_e(\delta) - A_e \s_e(\delta) 
	\end{equation}
	
	At $x=0 $, we must have
	\begin{enumerate}
		\item $ u(e,0)=u(e',0)$,
		for all $ e,e'\in E$, 
		\item $\rho \Lop_e u(e,0) = \sum_{e'\in E} \beta_{e'} u_{e}'(0)$,
		for all $e\in E $. 
	\end{enumerate}
	From the first assertion, we have that 
	\begin{equation}
		u'(e,0) = A_e \s_e'(0) + \braces{\lim_{x\to 0}\int_{(0,x)} f_{e}(\zeta) \,\m_{e}(\rd \zeta)}
		= A_e \s_e'(0). 
	\end{equation}
	This entails that $A_e \s'_e(0) = A_{e'} \s'_{e'}(0) $, for all $e,e'\in E $. 
	From the second assertion, we have that
	\begin{equation}
		-\rho f(\bv) = \sum_{e\in E} \beta_e A_e \s_e'(0)
		= A_e \s_e'(0) = A_{e'} \s_{e'}'(0),
	\end{equation}
	for all $e,e'\in E $.
	This entails that $A_e = -\rho f(\bv)/ \s'_e(0) $.
	To sum up
	\begin{equation}
		u(e,x) = b_e(x)  -b_e(\delta) - \frac{\rho f(\bv)}{ \s'_e(0)} \braces{\s_e(x) -  \s_e(\delta)}. 
	\end{equation}
	
	Hence, the function $u $ defined above solves~\eqref{eq_boundary_value_problem}, which proves the existence of a solution.
	
	Proof of~\ref{item_uniqueness}, i.e., uniqueness of the solution.
	From linearity, it is enough to prove that $u=0 $ is the unique solution of the problem 
	\begin{equation}
		\label{eq_boundary_value_problem_0}
		\begin{cases}
			\Lop_e u(e,x) = 0, &\forall\, x \in (0,\delta),\quad \forall\, e\in E,\\
			\sum_{e'\in E} \beta_{e'} u_{e}'(0) = 0,\\
			u(e,\delta) = 0, &\forall\, e\in E, \\
			u(e,0) = u(e',0), &\forall\, e,e'\in E. 
		\end{cases}
	\end{equation}
	Indeed, if $u_1, u_2 $ are two
	solutions of~\eqref{eq_boundary_value_problem}, then $u_1-u_2 $ solves~\eqref{eq_boundary_value_problem_0}.
	Hence, $u=0 $ being the unique solution of~\eqref{eq_boundary_value_problem_0} entails that $u_1=u_2 $. 
	
	The general version of the solution to~\eqref{eq_boundary_value_problem_0} is 
	(see \cite[Section~9]{Fel55})
	\begin{equation}
		\begin{aligned}
			u(e,x) &= A_{e} \s_e(x) + B_{e},
			&\forall\, e&\in E,
			&\forall\, x &>0,
		\end{aligned}  
	\end{equation}
	with $A_e,B_e \in \IR $, for all $e\in E $.
	Thereforem, $u $ is necessarily monotonic on every edge,   
	increasing on $e $, if $A_e\ge 0 $ and decreasing on $e $, if $A_e \le 0 $.
	This means that $u $ attains its maxima and minima necessarily either on $\bv $,
	either on $\partial B(\bv, \delta) $. 
	Let the maxima be located at $\bv $, then $A_e\le 0 $, for all $e\in E $.
	If $A_e=0 $, for all $e\in E $, then $u $ is constant on $B(0,\delta) $.
	If else $\sum_{e\in E} \beta_e \D_x u(e,0)< 0 $, which contradicts the gluing condition.
	Let the minima be located as $\bv $, then  $A_e\ge 0 $, for all $e\in E $. 
	If $A_e=0 $, for all $e\in E $, then $u $ is constant on $B(0,\delta) $.
	If else $\sum_{e\in E} \beta_e \D_x u(e,0)> 0 $, which contradicts the gluing condition.
	Hence minimas and maximas are necessarily located on $\partial B(0,\delta) $.
	From the boundary condition this means that $u $ is trivially null.
	This proves uniqueness.  
\end{proof}

\begin{proposition}
	\label{prop_sticky_estimate}
	For all $\delta>0 $, let $u^{(\delta)} $ be the solution to~\eqref{eq_boundary_value_problem}. 
	We have that
	\begin{equation}
		\begin{aligned}
			\lim_{\delta\to 0} \frac{u^{(\delta)}(\bv)}{\delta} = \rho f(\bv) .
		\end{aligned}
	\end{equation}
\end{proposition}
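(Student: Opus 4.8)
The plan is to read $u^{(\delta)}(\bv)$ off the explicit formula~\eqref{eq_Dirichlet_sol_form} and to isolate the single term responsible for the limit. Fix any edge $e \in E$. Since the integration interval $(\delta,\delta)$ is empty, $b_e(\delta) = 0$, and by the Standing Assumption $\s_e(0) = 0$; hence evaluating~\eqref{eq_Dirichlet_sol_form} at $x = 0$ gives
\[
u^{(\delta)}(\bv) = u^{(\delta)}(e,0) = b_e(0) + \frac{\rho f(\bv)}{\s'_e(0)}\,\s_e(\delta).
\]
Dividing by $\delta$, I would split
\[
\frac{u^{(\delta)}(\bv)}{\delta} = \frac{b_e(0)}{\delta} + \frac{\rho f(\bv)}{\s'_e(0)}\cdot\frac{\s_e(\delta)}{\delta},
\]
so that the whole statement reduces to identifying the limit of each summand.

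The second summand is immediate: since $\s_e(0) = 0$ and $\s'_e(0)$ is the (finite, positive) right-derivative of $\s_e$ at $0$ that already appears in~\eqref{eq_Dirichlet_sol_form}, one has $\s_e(\delta)/\delta \to \s'_e(0)$, and this term converges to $\rho f(\bv)$. Everything therefore comes down to showing $b_e(0) = o(\delta)$.

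For the first summand I would estimate $b_e(0)$ crudely. Writing $M := \sup_{y \in [0,\delta_0]}|f_e(y)| < \infty$ (finite because $f \in C_b(\Gamma)$), and using $\m_e((0,y)) \le \m_e((0,\delta))$ for $y < \delta$ together with $\int_{(0,\delta)}\s_e(\rd y) = \s_e(\delta)$, the definition of $b_e$ gives
\[
|b_e(0)| \le 2M \int_{(0,\delta)} \m_e((0,y))\,\s_e(\rd y) \le 2M\,\m_e((0,\delta))\,\s_e(\delta).
\]
Hence $\delta^{-1}|b_e(0)| \le 2M\,\m_e((0,\delta))\,\s_e(\delta)/\delta$, and since $\s_e(\delta)/\delta \to \s'_e(0) < \infty$, it suffices to prove that $\m_e((0,\delta)) \to 0$ as $\delta \to 0$.

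The main (and essentially the only) obstacle is this last point, where the regularity of $\mX$ at $\bv$ enters. Because $\mX$ is regular the vertex is accessible, so localizing near $\bv$ and appealing to the one-dimensional boundary classification (Proposition~\ref{prop_boundary_classification}) shows that the endpoint $0$ is a regular boundary for each edge, which forbids accumulation of speed mass at $0$ and yields $\m_e((0,\delta_0)) < \infty$ for some $\delta_0 > 0$ (the same finiteness is in fact needed for $b_e$ in Proposition~\ref{prop_Dirichlet_solution} to be well-defined). Granting this, the sets $(0,\delta)$ decrease to $\emptyset$ as $\delta \to 0$, so continuity from above of the finite measure $\m_e$ forces $\m_e((0,\delta)) \downarrow 0$. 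Combining the two summands gives $\delta^{-1}u^{(\delta)}(\bv) \to \rho f(\bv)$; the limit is independent of the chosen edge $e$ by the continuity constraint $u(e,0)=u(e',0)$ built into Proposition~\ref{prop_Dirichlet_solution}.
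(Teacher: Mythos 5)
Your proposal is correct and follows essentially the same route as the paper's proof: evaluate the explicit solution~\eqref{eq_Dirichlet_sol_form} at $\bv$, note $b_e(\delta)=0$, show $b_e(0)=o(\delta)$ via the bound $\xnorm{f}{\infty}\,\m_e((0,\delta))\,\s_e(\delta)/\delta \to 0$, and identify the limit of the remaining scale term as $\rho f(\bv)$. The only difference is that you spell out why $\m_e((0,\delta))\to 0$ (local finiteness of the speed measure near the accessible vertex plus continuity from above), a point the paper's proof uses without comment.
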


\begin{proof}
	We have that
	\begin{equation}
		u^{(\delta)}(\bv) = b^{(\delta)}_e(0)  -b^{(\delta)}_e(\delta) - \frac{\rho f(\bv)}{ \s'_e(0)} \braces{\s_e(0) - \s_e(\delta)},
	\end{equation}
	where $b^{(\delta)}_e $ is the function defined as 
	\begin{equation}
		\begin{aligned}
			b^{(\delta)}_e(x) &:= 
			2 \int_{(x,\delta)} \int_{(0,y)} f_{e}(\zeta) \,\m_{e}(\rd \zeta) \, \s_e(\rd y),
			& e&\in E,
			& x&\in [0,\delta),
			& \delta&>0.
		\end{aligned}
	\end{equation}
	We observe that $b^{(\delta)}_e(\delta)=0 $
	and that 
	\begin{align}
		\lim_{\delta \to 0} \frac{b^{(\delta)}_e(0)}{\delta}
		&\le \lim_{\delta \to 0} \frac{1}{\delta} \int_{(x,\delta)} \int_{(0,y)} \abs{f_{e}(\zeta)} \,\m_{e}(\rd \zeta) \, \s_e(\rd y)
		\\ &\le \xnorm{f}{\infty} 
		\lim_{\delta \to 0} \frac{1}{\delta}
		\int_{(x,\delta)} \int_{(0,y)} \,\m_{e}(\rd \zeta) \, \s_e(\rd y)
		\\ &\le \xnorm{f}{\infty} 
		\lim_{\delta \to 0} \braces{\m_{e}((0,\delta)) \frac{\s_e(\delta)-\s_e(0)}{\delta}}  = 0, 
	\end{align}
	for all $e\in E $.
	Hence, 
	\begin{align}
		\lim_{\delta\to 0} \frac{u^{(\delta)}(\bv)}{\delta}
		&= \lim_{\delta\to 0} \frac{\rho f(\bv)}{ \s'_e(0)} \frac{\s_e(\delta) -  \s_e(0)}
		{\delta} = \rho f(\bv). 
	\end{align}
	This completes the proof. 
\end{proof}

\subsection*{Acknowledgments}
This research was supported by the ANR project DREAMeS (ANR-21-CE46-0002-04). 
The author thanks Antoine Lejay and Nabil Kazi-Tani for initial discussions on the subject, and in particular, Antoine Lejay for the valuable reference~\cite{weber2001occupation}.

%========== Bibliography ==========
	\bibliography{bibfile}
	\bibliographystyle{abbrv}

\end{document}